\numberwithin{equation}{section}
\newtheorem{thm}{Theorem}[section]
\newtheorem{prop}[thm]{Proposition}
\newtheorem{lem}[thm]{Lemma}
\newtheorem{cor}[thm]{Corollary}
\newtheorem{ex}[thm]{Example}
\newtheorem{rem}[thm]{Remark}
\theoremstyle{definition} 
\newtheorem{dfn}[thm]{Definition}
\newcommand{\set}[1]{\{\,{#1}\,\}}
\newcommand{\Image}{\mathop{\rm Im}}
\DeclareMathOperator{\pr}{pr}
\DeclareMathOperator{\id}{id}
\newcommand{\cL}{\mathcal{L}}
\newcommand{\cM}{\mathcal{M}}
\newcommand{\cP}{\mathcal{P}}
\newcommand{\cX}{\mathcal{X}}
\newcommand{\cY}{\mathcal{Y}}
\newcommand{\field}[1]{\mathbb{#1}}
\newcommand{\R}{\field{R}}
\newcommand{\Z}{\field{Z}}
\newcommand{\N}{\field{N}}
\newcommand{\ep}{\varepsilon}
\newcommand{\mmsp}{mm-space}
\newcommand{\supp}{\mathop{\rm supp}}
\newcommand{\diam}{\mathop{\rm diam}}
\newcommand{\ObsDiam}{{\rm ObsDiam}}
\DeclareMathOperator{\dP}{{\it d}_{{\rm P}}}
\DeclareMathOperator{\kf}{{\it d}_{{\rm KF}}}
\DeclareMathOperator{\dconc}{{\it d}_{{\rm conc}}}
\newcommand{\Lip}{\mathcal{L}{\it ip}}
\newcommand{\dx}{{d_X}}
\newcommand{\mux}{{m_X}}
\newcommand{\muy}{{m_Y}}
\newcommand{\leb}{{\mathcal L^1}}
\newcommand{\Leb}[1]{\mathcal L^1|_{#1}}
\DeclareMathOperator{\ICL}{ICL}
\DeclareMathOperator{\IC}{IC}
\DeclareMathOperator{\dis}{dis}
\DeclareMathOperator{\dev}{dev}
\title{Isoperimetric inequality on a metric measure space and Lipschitz order with an additive error}
\author{Hiroki Nakajima}
\date{\today}
\keywords{metric measure space, Lipschitz order, 1-measurement, isoperimetric inequality, observable diameter}
\subjclass[2010]{Primary 53C23; Secondary 53C20}
\keywords{metric measure space, isoperimetric inequality}
\begin{document}
\maketitle
\begin{abstract}
M. Gromov introduced the Lipschitz order relation on the set of metric 
measure spaces and developed a rich theory. In particular, he claimed 
that an isoperimetric inequality on a non-discrete space is represented 
by using the Lipschitz order. We relax the definition of the Lipschitz 
order allowing an additive error to relate with an isoperimetric 
inequality on a discrete space. As an application, we obtain an 
isoperimetric inequality on the non-discrete $n$-dimensional $l^1$-cube 
by taking the limits of an isoperimetric inequality of the discrete $l^1
$-cubes.
\end{abstract}
\section{Introduction}
M. Gromov introduced the Lipschitz order relation on the set of metric measure spaces and developed a rich theory \cite{Gmv:green}. The aim of this paper is to relax the definition of Lipschitz order to adopt an additive error in order to expand the range of its applications. Especially, obtaining isoperimetric inequalities on various spaces is one of the most important applications.

One of the most famous isoperimetric inequalities is L\'evy's isoperimetric inequality (Theorem \ref{thm:levy_ineq}).  On a general metric measure space, we consider a L\'evy type isoperimetric inequality. Let $(X,d_X)$ be a complete separable metric space with a Borel probability measure $m_X$. We call such a triple $(X,d_X,m_X)$ an {\it mm-space} (which is an abbreviation of a metric measure space). If we say that $X$ is an mm-space, the metric and the measure are respetively indicated by $d_X$ and $m_X$.
\begin{dfn}[Isoperimetric comparison condition of L\'evy type]
We say that an mm-space $X$ satisfies the {\it isoperimetric comparison condition of L\'evy type} $\ICL_\ep(\nu)$ for a Borel probability measure $\nu$ on $\R$ and a real number $\ep\ge 0$ if we have $F_\nu(b)\le m_X(B_{b-a+\ep}(A))$ for any $a,b\in\supp\nu$ with $a\le b$ and for any Borel subset $A\subset X$ with $m_X(A)>0$ and $F_\nu(a)\le m_X(A)$, where $F_\nu(t):=\nu((-\infty,t])$ is the cumulative distribution function of $\nu$. We write $\ICL(\nu)$ as $\ICL_0(\nu)$ for simplicity.
\end{dfn}
The {\it $1$-measurement} of an mm-space $X$ is defined as
\[
\cM (X;1):=\set{\varphi_*\mux\mid \varphi :X\to \R \text{ $1$-Lipschitz function} },
\]
where $\varphi_*\mux$ is the push-forward measure of $\mux$ by $\varphi$ and a {\it $1$-Lipschitz function} is a Lipschitz continuous function with
Lipschitz constant less than or equal to one.
We denote by $\cP(\R)$ the set of all Borel probability measures on $\R$ and we see $\cM(X;1)\subset \cP(\R)$. In the case where $\nu\in \cM(X;1)$, the $\ICL(\nu)$ condition for $X$ means to have a sharp isoperimetric inequality on $X$.
The L\'evy's isoperimetric inequality is paraphrased by that $S^n(1)$ satisfies $\ICL(\xi_*m_{S^n(1)})$, where $\xi:S^n(1)\to\R$ is the distance function from one point.
$\cP(\R)$ has an order relation called the iso(perimetrically)-Lipschitz order.
\begin{dfn}[iso-Lipschitz order]
Let $\mu,\nu\in\cP(\R)$. We say that {\it $\mu$ iso-dominates $\nu$} and denote $\mu\succ' \nu$ if there exists a monotone non-decreasing 1-Lipschiz function $f:\supp\mu\to\supp\nu$ such that $f_*\mu=\nu$, where $\supp\mu$ is the support of $\mu$.
\end{dfn}
Gromov defined an iso-dominant using the iso-Lipschitz order and claimed that an iso-dominant recollects the isoperimetric inequality \cite{Gmv:isop}.
\begin{dfn}[iso-dominant \cite{Gmv:isop}]
We call a Borel probability measure an {\it iso-dominant} of an mm-space $X$ if it is an upper bound of $\cM(X;1)$ with respect to the iso-Lipschitz order $\succ'$.
\end{dfn}
We have the following relation between an iso-dominant and ICL.
\begin{thm}[\cite{NkjShioya:isop}]\label{thm:ns}
Let $X$ be an mm-space and $\nu$ a Borel probability measure on $\R$. Assume that the cumulative distribution function $F_\nu$ of $\nu$ is continuous. Then, $X$ satisfies $\ICL(\nu)$ if and only if $\nu$ is an iso-dominant of $X$.
\end{thm}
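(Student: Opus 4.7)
My plan uses the left-continuous quantile function $Q_\mu(s) := \inf\{t \in \R : F_\mu(t) \ge s\}$ together with the identity $F_\mu(t) \ge s \iff t \ge Q_\mu(s)$, which follows from right-continuity of $F_\mu$. The natural candidate for the 1-Lipschitz monotone rearrangement witnessing $\nu \succ' \mu$ is the composition $T_\mu := Q_\mu \circ F_\nu$; it maps $\supp\nu$ into $\supp\mu$, and the continuity of $F_\nu$ ensures that $(T_\mu)_* \nu = \mu$. I will reduce each direction of the equivalence to the statement that $T_\mu$ is 1-Lipschitz on $\supp\nu$.

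For the forward direction, I assume $\ICL(\nu)$, fix $\mu = \varphi_* m_X \in \cM(X;1)$, and take $\alpha \le \beta$ in $\supp\nu$. Setting $A := \varphi^{-1}((-\infty, T_\mu(\alpha)])$, the quantile identity yields $m_X(A) = F_\mu(T_\mu(\alpha)) \ge F_\nu(\alpha)$. The hypothesis $\ICL(\nu)$ then gives $F_\nu(\beta) \le m_X(B_{\beta-\alpha}(A))$, while the 1-Lipschitz property of $\varphi$ produces the inclusion $B_{\beta-\alpha}(A) \subset \varphi^{-1}((-\infty, T_\mu(\alpha) + (\beta-\alpha)])$. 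Combining these yields $F_\mu(T_\mu(\alpha)+(\beta-\alpha)) \ge F_\nu(\beta)$, and the quantile identity then gives $T_\mu(\beta) \le T_\mu(\alpha) + (\beta - \alpha)$. The degenerate case $F_\nu(\alpha) = 0$, which makes $m_X(A)$ possibly zero and excludes the use of $\ICL(\nu)$, is handled by approximating $\alpha$ from the right within $\supp\nu$ using continuity of $F_\nu$ together with left-continuity of $Q_\mu$.

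For the reverse direction, I assume $\nu$ is an iso-dominant. Given a Borel $A \subset X$ with $m_X(A) > 0$ and $a \le b$ in $\supp\nu$ with $F_\nu(a) \le m_X(A)$, I set $\varphi(x) := d_X(x, A)$ and $\mu := \varphi_* m_X \in \cM(X;1)$. Since $\supp\mu \subset [0, \infty)$ and $F_\mu(0) \ge m_X(A) \ge F_\nu(a)$, the quantile identity pins down $T_\mu(a) = 0$ when $F_\nu(a) > 0$. The iso-dominant hypothesis $\nu \succ' \mu$ will, via the argument outlined below, force $T_\mu$ to be 1-Lipschitz on $\supp\nu$, hence $T_\mu(b) \le T_\mu(a) + (b - a) = b - a$. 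Applying the quantile identity once more yields $F_\mu(b - a) \ge F_\nu(b)$, which is precisely $m_X(B_{b-a}(A)) \ge F_\nu(b)$, the content of $\ICL(\nu)$ for the chosen data.

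The main technical step will be the reverse-direction claim that $T_\mu$ is 1-Lipschitz starting from the mere existence of some 1-Lipschitz monotone non-decreasing $f : \supp\nu \to \supp\mu$ with $f_*\nu = \mu$. My plan is to exploit essential uniqueness of monotone couplings under continuity of $F_\nu$: any such $f$ must coincide with $T_\mu$ on a subset of $\supp\nu$ of full $\nu$-measure, and this agreement set is dense in $\supp\nu$ because $\supp\nu$ is the support of $\nu$. Continuity of $F_\nu$ combined with left-continuity of $Q_\mu$ then transports the 1-Lipschitz estimate from this dense set to all of $\supp\nu$, with extra care at $\alpha = \inf \supp\nu$, handled via a one-sided limit, and at points where $\supp\mu$ has gaps, where any jump of the rearrangement occurs at a quantile level whose size is controlled by the modulus of continuity of $F_\nu$.
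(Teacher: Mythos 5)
Your forward direction is correct and is in fact a cleaner version of the paper's own proof of the more general Theorem~\ref{thm:isoICL}\eqref{thm:isoICL:ICLtoDominant}: by choosing $A:=\varphi^{-1}((-\infty,T_\mu(\alpha)])$ you get $m_X(A)=F_\mu(T_\mu(\alpha))\ge F_\nu(\alpha)$ directly from the quantile identity, so $\ICL(\nu)$ applies at $a=\alpha$ itself, whereas the paper has to step back to $a=\tilde V(t_1^n)-s_k^n$ and thereby incurs the $\Delta(\supp\nu)$ error. Your handling of the $F_\nu(\alpha)=0$ boundary case by right-approximation is fine.

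The reverse direction has a genuine gap. Your strategy is to deduce that $T_\mu=Q_\mu\circ F_\nu$ is $1$-Lipschitz from the mere existence of a monotone $1$-Lipschitz witness $f$ with $f_*\nu=\mu$, via the $\nu$-a.e.\ identity $f=T_\mu$ and a density argument. The set $D\subset\supp\nu$ where $f=T_\mu$ is indeed dense, but if $a\in\supp\nu$ is the right endpoint of a gap of $\supp\nu$, then $D$ has no points immediately to the left of $a$, and since $Q_\mu$ is only left-continuous the limit of $T_\mu(x_n)$ over $x_n\in D$ with $x_n\downarrow a$ is $Q_\mu(F_\nu(a)^+)$, which can exceed $T_\mu(a)=Q_\mu(F_\nu(a))$ by the full length of a gap in $\supp\mu$; this jump has nothing to do with the modulus of continuity of $F_\nu$, so the extra-care remark at the end does not repair it. Concretely, take $\nu:=\tfrac12\leb|_{[0,1]}+\tfrac12\leb|_{[2,3]}$ and $X:=\{0\}\cup[1,2]$ with $m_X=\tfrac12\delta_0+\tfrac12\leb|_{[1,2]}$: one checks that $\nu$ iso-dominates every $\psi_*m_X$ (the witness for $\psi=\id$ is $f\equiv 0$ on $[0,1]$, $f(t)=t-1$ on $[2,3]$, so $f(2)=1\ne 0=T_\mu(2)$), yet $\ICL(\nu)$ fails at $a=2,b=3,A=\{0\}$. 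This is precisely why the paper's analogous Theorem~\ref{thm:isoICL}\eqref{thm:isoICL:DominantToICL} carries the extra hypothesis ``$\supp\nu$ is connected or $\nu(\{x\})>0$ for all $x\in\supp\nu$''; under continuous $F_\nu$ this reduces to connectedness of $\supp\nu$, and you should assume it. With that hypothesis there are no gap right-endpoints, $Q_\nu\circ F_\nu=\id$ on $\supp\nu$, hence $f=f\circ Q_\nu\circ F_\nu=Q_\mu\circ F_\nu=T_\mu$ everywhere, and your argument closes; alternatively you could bypass $T_\mu$ entirely and argue directly with $f$, as the paper does by introducing the auxiliary points $a'$ and $b'$.
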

Gromov claimed a variant of Theorem \ref{thm:ns} without proof (see \cite{Gmv:isop} \S 9).
We focus on the continuity of $F_\nu$ in Theorem \ref{thm:ns}.
Without the continuity of $F_\nu$, we find the following counter example of Theorem \ref{thm:ns}.
We put $[k]:=\set{0,\dots,k-1}$ and consider the $n$-dimensional discrete cube $[k]^n$ equipped with the $l^1$-distance and the uniform measure, say $m_{[k]^n}$.
Then, $[k]^n$ satisfies $\ICL((d_0)_*m_{[k]^n})$, where $d_0$ is the distance function from the origin \cite{Bol:comp}. 
Since the cumulative distribution function of $(d_0)_*m_{[k]^n}$ is not continuous, we are not able to apply Theorem \ref{thm:ns} with $[k]^n$ as an mm-space $X$.
Moreover, $(d_0)_*m_{[k]^n}$ is not an iso-dominant of $[k]^n$.
However, we regard $(d_0)_*m_{[k]^n}$ as an iso-dominant of $[k]^n$ if we allow an error.
This is one of our motivations of introducing the iso-Lipschitz order with an error.

Now, we define the iso-Lipschitz order with an additive error using transport plan (Definition \ref{dfn:trans}) and the following iso-deviation.
\begin{dfn}[iso-deviation]
We define the {\it iso-deviation} $\dev_\succ$ of a subset $S\subset \R^2$ by
\[
\dev_\succ S:=\sup\set{y-y'-\max\set{x-x',0}\mid (x,y),(x',y')\in S}.
\]
\end{dfn}
The iso-deviation evaluates the deviation from the monotone non-decreasing and 1-Lipschitz property.
\begin{dfn}[iso-Lipschitz order $\succ'_{(s,t)}$ with error $(s,t)$]
Let $\mu$ and $\nu$ be two Borel probability measures on $\R$ and $s,t\ge 0$ two real numbers. We say that {\it $\mu$ iso-dominates $\nu$ with error $(s,t)$} and denote $\mu\succ'_{(s,t)}\nu$ if there exists a transport plan $\pi\in\Pi(\mu,\nu)$ and a Borel subset $S\subset \R^2$ such that $\dev_\succ S\le s$ and $1-\pi(S)\le t$.
\end{dfn}
The iso-Lipschitz order $\succ'_{(s,t)}$ with error $(s,t)$ satisfies some beneficial properties such as Theorems \ref{thm:order_dev:zero_equiv}, \ref{thm:order_dev:nonDegenerate}, and \ref{thm:order_dev:transitive} in Section \ref{section:isoLipschitzOrder}.
Now, we define the iso-dominant with an error by using the iso-Lipschitz order with an error.
\begin{dfn}[$\ep$-iso-dominant]
Let $\ep\ge 0$ be a real number.
We call a Borel probability measure $\nu$ on $\R$ an {\it $\ep$-iso-dominant} of an mm-space $X$ if we have $\nu\succ'_{(\ep,0)}\mu$ for all $\mu\in\cM(X;1)$.
\end{dfn}
We have the following Theorem \ref{thm:isoICL}, which explains the relation between $\ep$-iso-dominant and $\ICL_\ep(\nu)$.
\begin{thm}\label{thm:isoICL}
Let $X$ be an mm-space and $\nu$ a Borel probability measure on $\R$, and let $\ep\ge 0$. We define 
\[
\Delta(\supp\nu):=\sup\set{\delta_\nu^-(a)\mid a\in\supp\nu\setminus\{\inf\supp\nu\}},
\]
where $\delta_\nu^-(a):=\inf\set{t>0\mid a-t\in\supp\nu}$. Then we have the following \eqref{thm:isoICL:ICLtoDominant} and \eqref{thm:isoICL:DominantToICL}.
\begin{enumerate}
\item If $\inf\supp\nu>-\infty$, we assume $\nu(\{\inf\supp\nu\})\le\mux(\{x\})$ for $x\in\supp\mux$. Then, $\nu$ is an $(\ep+\Delta(\supp\nu))$-iso-dominant of $X$ if $X$ satisfies $\ICL_\ep(\nu)$.\label{thm:isoICL:ICLtoDominant}
\item We assume that $\supp\nu$ is connected or $\nu(\{x\})>0$ for all $x\in\supp\nu$．Then，$X$ satisfies $\ICL_{2\ep}(\nu)$ if $\nu$ is an $\ep$-iso-dominant of $X$.
\label{thm:isoICL:DominantToICL}
\end{enumerate}
\end{thm}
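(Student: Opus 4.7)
The proof splits into two implications, which I handle separately. For part (1), given $\mu=\varphi_* m_X\in\cM(X;1)$, I will exhibit the monotone (quantile) coupling as the transport plan witnessing $\nu\succ'_{(\ep+\Delta(\supp\nu),0)}\mu$. Let $G_\nu(s):=\inf\{x:F_\nu(x)\ge s\}$ and $G_\mu(s):=\inf\{y:F_\mu(y)\ge s\}$ for $s\in(0,1)$ be the left-continuous quantile functions, and set $\pi:=(G_\nu,G_\mu)_*\cL^1|_{(0,1)}\in\Pi(\nu,\mu)$, with $S$ the closure of the graph $\{(G_\nu(s),G_\mu(s)):s\in(0,1)\}$ (Borel, $\pi$-full measure). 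The main estimate, equivalent to $\dev_\succ S\le\ep+\Delta(\supp\nu)$, is
\[G_\mu(s_1)-G_\mu(s_2)\le G_\nu(s_1)-G_\nu(s_2)+\ep+\Delta(\supp\nu)\]
for all $s_1\ge s_2$ in $(0,1)$. When $G_\nu(s_2)>\inf\supp\nu$, I will choose $a_\delta\in\supp\nu$ with $a_\delta<G_\nu(s_2)$ and $G_\nu(s_2)-a_\delta<\Delta(\supp\nu)+\delta$ (possible by definition of $\Delta$), and apply $\ICL_\ep(\nu)$ with $a=a_\delta$, $b=G_\nu(s_1)$, $A=\varphi^{-1}((-\infty,G_\mu(s_2)])$; combined with the $1$-Lipschitz inclusion $B_r(A)\subset\varphi^{-1}((-\infty,G_\mu(s_2)+r])$, this yields $G_\mu(s_1)\le G_\mu(s_2)+(G_\nu(s_1)-a_\delta)+\ep$, and letting $\delta\to 0$ gives the estimate. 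When $G_\nu(s_2)=\inf\supp\nu$, the atom-comparison hypothesis forces $\mu(\{\inf\supp\mu\})\ge\nu(\{\inf\supp\nu\})$, so $G_\mu(s_2)=\inf\supp\mu$, and a parallel application of $\ICL_\ep(\nu)$ at $a=\inf\supp\nu$ closes this case.

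For part (2), fix Borel $A\subset X$ with $m_X(A)>0$ and $a\le b$ in $\supp\nu$ with $F_\nu(a)\le m_X(A)$; set $\varphi(x):=d_X(x,A)$ and $\mu:=\varphi_* m_X\in\cM(X;1)$, so that $F_\mu(0)\ge m_X(A)\ge F_\nu(a)$. Using $\nu\succ'_{(\ep,0)}\mu$, pick $\pi\in\Pi(\nu,\mu)$ and a Borel $S\subset\R^2$ with $\dev_\succ S\le\ep$ and $\pi(S)=1$. The key pigeonhole step is: from $F_\nu(a)\le F_\mu(0)$, basic set algebra gives $\pi(\{x\le a,y>0\})\le\pi(\{x>a,y\le 0\})$. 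Hence either $\{x\le a,y>0\}\cap S$ is $\pi$-null, or there exists $(x_2,y_2)\in S$ with $x_2>a$ and $y_2\le 0$; in the latter case $\dev_\succ S\le\ep$ applied to any $(x_1,y_1)\in S$ with $x_1\le a$ gives $y_1-y_2\le\max(x_1-x_2,0)+\ep=\ep$, so $y_1\le\ep$. After removing a $\pi$-null subset of $S$, every $(x,y)\in S$ with $x\le a$ satisfies $y\le\ep$.

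Under either hypothesis on $\supp\nu$, I will select a pivot $(x^*,y^*)\in S$ with $x^*\le a$ and $y^*\le\ep$, taking $x^*$ as close to $a$ as possible: if $\nu(\{a\})>0$ take $x^*=a$ (since $\pi(\{x=a\}\cap S)=\nu(\{a\})>0$); if $\supp\nu$ is connected and $a>\inf\supp\nu$, use that the first-coordinate projection of $S$ has full $\nu$-measure and $\nu((a-\delta,a))>0$ for all $\delta>0$ to take $x^*\nearrow a^-$; the degenerate case $a=\inf\supp\nu$ with $F_\nu(a)=0$ is reduced by replacing $a$ with $a'\in(a,b]\cap\supp\nu$ and using $B_{b-a'+2\ep}(A)\subset B_{b-a+2\ep}(A)$. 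For any $(x,y)\in S$ with $x\le b$, $\dev_\succ S\le\ep$ gives $y\le y^*+(b-x^*)+\ep\le(b-x^*)+2\ep$, and passing to the limit $x^*\to a$ yields $y\le b-a+2\ep$. Therefore $F_\nu(b)=\pi(\{x\le b\})\le\pi(\{y\le b-a+2\ep\})=F_\mu(b-a+2\ep)\le m_X(B_{b-a+2\ep}(A))$, which is $\ICL_{2\ep}(\nu)$.

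The main technical obstacle lies at the endpoints of $\supp\nu$. For part (1), the atom-comparison hypothesis must be propagated carefully through $\varphi$ to conclude that $G_\mu$ is constant at $\inf\supp\mu$ on the relevant $s$-interval; this depends on the fact that atoms of $m_X$ push forward to atoms of $\mu$ of at least equal mass. For part (2), the dichotomy on $\supp\nu$ is precisely what permits the pivot $x^*$ to be taken arbitrarily close to $a$, ensuring the estimate $b-x^*+2\ep$ converges to the sharp value $b-a+2\ep$. Routine matters such as the measurability of $S$ after null-set removal, and the open-versus-closed convention for $B_r(A)$, should pose no difficulty.
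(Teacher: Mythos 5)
Your proposal is correct and follows essentially the same strategy as the paper: for part (1) the quantile (monotone) coupling $\pi=(G_\nu,G_\mu)_*\cL^1$ and the application of $\ICL_\ep(\nu)$ with a sublevel set $A=\varphi^{-1}((-\infty,G_\mu(s_2)])$ and with $a$ chosen in $\supp\nu$ within $\Delta(\supp\nu)+\delta$ of $G_\nu(s_2)$ is exactly what the paper does via its $\tilde V,\tilde F$; and for part (2) the transport-plan argument with $\varphi=d_X(\cdot,A)$ is the same, with your pigeonhole/pivot point $x^*$ playing the role of the paper's $a'=\sup\{x:(x,y)\in\supp\pi,\,y\le 0\}$ (the paper routes through $a'\ge a$ and $b'\ge b$ and a containment of $\supp\pi$, which is a mild reorganization of the same estimate). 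Your handling of the edge cases—propagating the atom hypothesis to $\inf\supp\mu$ in (1), and reducing $a=\inf\supp\nu$, $F_\nu(a)=0$ to a nearby $a'\in\supp\nu$ with $F_\nu(a')\le m_X(A)$ using the inclusion $B_{b-a'+2\ep}(A)\subset B_{b-a+2\ep}(A)$ in (2)—is sound and matches the paper's treatment in substance.
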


The condition that $\nu$ is an $\ep$-iso-dominant of $X$ is stable under convergence with respect to the Prohorov distance $\dP$ and the observable distance $\dconc$. This property enables us to obtain the isoperimetric inequality of a continuous space by using  a discretization.
The following Theorem \ref{thm:stabilityIso} is one of the main theorem of this paper and represents the stability of $\ep$-iso-dominant.
\begin{thm}\label{thm:stabilityIso}
Let $X$ and $X_n, n=1,2,\dots$, be mm-spaces, $\nu$ and $\nu_n, n=1,2,\dots$, Borel probability measures, and $\ep_n, n=1,2,\dots$, non-negative real numbers. We assume that $X_n$ $\dconc$-converges to $X$ and $\nu_n$ weakly converges to $\nu$, and $\ep_n$ converges to a real number $\ep$ as $n\to\infty$ and that $\nu_n$ is an $\ep_n$-iso-dominant of $X_n$ for any positive integer $n$. Then, $\nu$ is an $\ep$-iso-dominant of $X$.
\end{thm}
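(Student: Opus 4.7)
Fix any $\mu\in\cM(X;1)$; we must construct $\pi\in\Pi(\nu,\mu)$ and a Borel set $S\subset\R^2$ with $\dev_\succ S\le\ep$ and $\pi(S)=1$. The strategy is to obtain such a pair $(\pi,S)$ as a weak-subsequential limit of the data witnessing each relation $\nu_n\succ'_{(\ep_n,0)}\mu_n$ for a suitable approximation $\mu_n\to\mu$.

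First we approximate the given measurement. The $\dconc$-convergence $X_n\to X$ implies (as in Shioya's framework) that $\cM(X_n;1)$ converges to $\cM(X;1)$ in the Hausdorff distance with respect to $\dP$; hence we pick $\mu_n\in\cM(X_n;1)$ with $\dP(\mu_n,\mu)\to 0$, so that $\mu_n\to\mu$ weakly. Since $\nu_n$ is an $\ep_n$-iso-dominant of $X_n$ and $\mu_n\in\cM(X_n;1)$, the relation $\nu_n\succ'_{(\ep_n,0)}\mu_n$ furnishes $\pi_n\in\Pi(\nu_n,\mu_n)$ and a Borel set $S_n\subset\R^2$ with $\dev_\succ S_n\le\ep_n$ and $\pi_n(S_n)=1$. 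Since the kernel $(x,y,x',y')\mapsto y-y'-\max\{x-x',0\}$ in the definition of $\dev_\succ$ is continuous, replacing $S_n$ by $\overline{S_n}$ preserves the bound, so we may assume $S_n$ closed, whence $\supp\pi_n\subset S_n$.

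Next we pass to the limit. The weak convergences $\nu_n\to\nu$ and $\mu_n\to\mu$ force $\{\pi_n\}\subset\cP(\R^2)$ to be tight, so by Prokhorov's theorem a subsequence (still denoted $\pi_n$) converges weakly to some $\pi\in\cP(\R^2)$; continuity of the projections yields $\pi\in\Pi(\nu,\mu)$. We define $S:=\liminf_n\supp\pi_n$ in the Kuratowski sense, which is automatically closed, hence Borel. For any $z\in\supp\pi$ and any open neighborhood $U\ni z$, the Portmanteau theorem gives $\liminf_n\pi_n(U)\ge\pi(U)>0$, so $U\cap\supp\pi_n\ne\emptyset$ eventually; this shows $\supp\pi\subset S$, and hence $\pi(S)=1$. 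Finally, for $(x,y),(x',y')\in S$ we obtain $(x_n,y_n),(x'_n,y'_n)\in\supp\pi_n\subset S_n$ converging to them along the \emph{same} index $n$; taking $n\to\infty$ in $y_n-y_n'-\max\{x_n-x_n',0\}\le\ep_n$ yields $\dev_\succ S\le\ep$, completing the construction.

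\textbf{Main obstacle.} The critical structural point is the use of the Kuratowski lower limit rather than the upper limit in the definition of $S$: bounding $\dev_\succ S$ requires simultaneously approximating \emph{two} points of $S$ by points of $\supp\pi_n$ along a \emph{common} tail of indices, which is precisely what the lower limit provides. The only other external input is the standard equivalence between $\dconc$-convergence of mm-spaces and Hausdorff--Prohorov convergence of their 1-measurements, which converts the $\dconc$-hypothesis on $X_n\to X$ into the usable weak convergence $\mu_n\to\mu$ employed above.
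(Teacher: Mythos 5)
Your proof is correct, and it takes a genuinely different route from the paper's. The paper does not re-run a compactness argument inside the proof of Theorem~\ref{thm:stabilityIso}; instead it reduces the statement to a chain of softer lemmas. Concretely, using Lemma~\ref{lem:isodominantProk} (Prohorov-closeness of measures preserves $(s,t)$-iso-dominance up to a $(2\delta,\delta)$ correction), Lemma~\ref{lem:isodominantConc} (the analogous statement for $\dconc$-closeness of spaces), and Theorem~\ref{thm:order_dev:transitive} (additivity of errors under composition), the paper deduces that $\nu$ is an $(\ep+5\delta,\,3\delta)$-iso-dominant of $X$ for every $\delta>0$, and then invokes the non-degeneracy Theorem~\ref{thm:order_dev:nonDegenerate} to drop the $\delta$. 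The weak-compactness content is thus quarantined entirely inside Theorem~\ref{thm:order_dev:nonDegenerate}, where it is carried out with a diagonal extraction over a Prohorov-tight exhaustion $\{K_m\}$ and Hausdorff limits of $S_n\cap K_m$.

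Your proof instead imports the $\dconc$-to-Hausdorff--Prohorov comparison directly (the same inequality that underlies Lemma~\ref{lem:isodominantConc}), picks approximants $\mu_n\in\cM(X_n;1)$, and runs a single compactness pass: weak limit of $\pi_n$, Kuratowski lower limit of $\supp\pi_n$. This is cleaner than the diagonal extraction in Theorem~\ref{thm:order_dev:nonDegenerate}, and the reason it can afford to be cleaner is worth making explicit: you are in the special case $t=0$, so $\pi_n(S_n)=1$ forces $\supp\pi_n\subset S_n$, which is precisely what lets you forget the sets $S_n$ entirely and work with supports. In the general $(s,t)$ setting of Theorem~\ref{thm:order_dev:nonDegenerate} that reduction is unavailable, which is why the paper's more elaborate $K_m$-diagonalization is needed there. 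Two minor points to tighten: (i) state explicitly that the Kuratowski $\liminf$ is taken along the Prokhorov subsequence you extracted (your argument tacitly re-indexes); (ii) your portmanteau step in fact shows $\supp\pi\subset\liminf_n\supp\pi_n$, which is exactly what you need, but it is worth noting that a $\limsup$ would not suffice, precisely for the reason you flag in your ``main obstacle'' paragraph. With those caveats the argument is complete, and it yields a self-contained proof of this theorem that bypasses Theorem~\ref{thm:order_dev:nonDegenerate}.
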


We obtain a sharp isoperimetric inequality of the $n$-dimensional $l^1$-hyper cube $[0,1]^n$ as one of the applications of the Lipschitz order with an error by using Theorem 8 in \cite{Bol:comp}.
The $n$-dimensional $l^1$-hyper cube $[0,1]^n$ is the $n$-dimensional cube $[0,1]^n$ equipped with the $l^1$-distance $d_{l^1}$ and the uniform measure.
The following Theorem \ref{thm:lOneIso} is a sharp isoperimetric inequality on it.
\begin{thm}\label{thm:lOneIso}
$(d_0)_*m_{[0,1]^n}$ is the maximum of $\cM([0,1]^n;1)$, where $d_0$ is the distance function from the origin.
\end{thm}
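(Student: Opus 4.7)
The strategy is to discretize the continuous $l^1$-cube, apply the known discrete isoperimetric inequality of Bollob\'as--Leader, and pass to the limit using the stability theorem. For each integer $k\ge 2$ I set
\[
X_k := \left([k]^n,\; \tfrac{1}{k-1}\,d_{l^1},\; m_{[k]^n}\right),
\]
which is naturally identified with the rescaled lattice $\{0,\tfrac{1}{k-1},\ldots,1\}^n\subset[0,1]^n$ with the induced $l^1$-metric and uniform measure. Let $\nu_k := (d_0)_* m_{X_k}$, with $d_0$ the distance from the origin in the rescaled metric. By Theorem~8 of \cite{Bol:comp}, the discrete cube $[k]^n$ satisfies $\ICL((d_0)_* m_{[k]^n})$; dividing all distances by $k-1$ gives $\ICL(\nu_k)$ for $X_k$.

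Next I invoke Theorem~\ref{thm:isoICL}\eqref{thm:isoICL:ICLtoDominant}. The support of $\nu_k$ is $\{0,\tfrac{1}{k-1},\ldots,n\}$, so $\Delta(\supp\nu_k)=\tfrac{1}{k-1}$, and $\nu_k(\{0\})=k^{-n}=m_{X_k}(\{x\})$ for every atom $x$ of $m_{X_k}$, verifying the side condition. Hence $\nu_k$ is a $\tfrac{1}{k-1}$-iso-dominant of $X_k$.

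The third step is to take $k\to\infty$. Viewing both $X_k$ and $[0,1]^n$ inside $([0,1]^n,d_{l^1})$, the Hausdorff distance between them is $O(1/k)$, and the uniform measure on the rescaled lattice converges weakly to $m_{[0,1]^n}$ by a Riemann-sum argument. Consequently $X_k$ converges to $[0,1]^n$ in the Gromov--Hausdorff--Prohorov sense, which is stronger than $\dconc$-convergence. Since $d_0$ is continuous on $[0,1]^n$, the weak convergence of the measures implies $\nu_k\to\nu:=(d_0)_* m_{[0,1]^n}$ weakly. With $\ep_k=\tfrac{1}{k-1}\to 0$, Theorem~\ref{thm:stabilityIso} gives that $\nu$ is a $0$-iso-dominant of $[0,1]^n$.

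Finally, by Theorem~\ref{thm:order_dev:zero_equiv}, $\succ'_{(0,0)}$ coincides with the original iso-Lipschitz order $\succ'$, so $\nu\succ'\mu$ for every $\mu\in\cM([0,1]^n;1)$. Since $d_0$ is $1$-Lipschitz, $\nu$ itself lies in $\cM([0,1]^n;1)$, and therefore is the maximum. The main obstacle is the $\dconc$-convergence $X_k\to[0,1]^n$: once one confirms that Gromov--Hausdorff--Prohorov approximation by the rescaled lattices dominates $\dconc$, the rest is bookkeeping on the support and atoms of $\nu_k$ to feed the stability theorem.
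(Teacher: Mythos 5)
Your proof follows essentially the same route as the paper: discretize the cube, apply the Bollob\'as--Leader $\ICL$ inequality and Theorem \ref{thm:isoICL}\eqref{thm:isoICL:ICLtoDominant} to obtain an $\ep_k$-iso-dominant of the rescaled lattice, then pass to the limit via Theorem \ref{thm:stabilityIso} and identify $\succ'_{(0,0)}$ with $\succ'$ via Theorem \ref{thm:order_dev:zero_equiv}. The scale factor $1/(k-1)$ versus the paper's $1/k$ is cosmetic, and your explicit check of the atom side-condition ($\nu_k(\{0\})=k^{-n}=m_{X_k}(\{x\})$) is correct. The one point you flag as open --- that the lattice approximation controls $\dconc$ --- is already settled by the coupling in Lemma \ref{lem:discretization}: the nearest-lattice-point map $f$ satisfies $d_{l^1}(x,f(x))\le n/k$, so if $\varphi$ is any parameter of $[0,1]^n$ then $\psi:=f\circ\varphi$ is a parameter of the lattice with $|\varphi^*d_{l^1}(s,t)-\psi^*d_{l^1}(s,t)|\le 2n/k$ for all $s,t\in I$; this gives $\square\le 2n/k$, and then $\dconc\le\square$ by Proposition \ref{prop:concBox}. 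There is no need to route through Gromov--Hausdorff--Prohorov convergence, which the paper never defines or invokes.
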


By Theorems \ref{thm:lOneIso} and \ref{thm:ns}, the $l^1$-hyper cube $[0,1]^n$ satisfies 
$\ICL((d_0)_*$\\
$m_{[0,1]^n})$. Namely, we have the following Corollary \ref{cor:lOneIso}.
\begin{cor}\label{cor:lOneIso}
For any closed subset $\Omega\subset [0,1]^n$ with $\cL^n(\Omega)>0$, we take a metric ball $B_\Omega\subset [0,1]^n$ centered at the origin with $\cL^n(B_\Omega)=\cL^n(\Omega)$. Then we have 
\[
\cL^n|_{[0,1]^n}(U_r(\Omega))\ge\cL^n|_{[0,1]^n}(U_r(B_\Omega))
\]
for any $r>0$, where $U_r(A):=\set{x\in [0,1]^n\mid d_{l^1}(x,A)< r}$ is the open $r$-neighborhood of a subset $A\subset [0,1]^n$ with respect to the $l^1$-distance $d_{l^1}$.
\end{cor}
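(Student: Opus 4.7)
My plan is to read the corollary as a direct restatement of the condition $\ICL((d_0)_* m_{[0,1]^n})$, which the sentence immediately preceding the corollary records as a consequence of Theorems \ref{thm:lOneIso} and \ref{thm:ns}; so the task is to perform the restatement. Write $\nu := (d_0)_* m_{[0,1]^n}$ and $F := F_\nu$. Two preliminaries are needed in order to invoke Theorem \ref{thm:ns}: the distribution function $F$ is continuous (each level set $\{d_0 = t\} \cap [0,1]^n$ lies in an affine hyperplane and is therefore $\cL^n$-null), and $\supp \nu = [0, n]$ (since $d_0$ continuously maps $[0,1]^n$ onto $[0,n]$).

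For $\Omega$ as in the statement, continuity of $F$ together with the intermediate value theorem furnishes an $r_0 \in [0,n]$ with $F(r_0) = \cL^n(\Omega)$, and I take $B_\Omega := \{x \in [0,1]^n : d_0(x) \le r_0\}$, whose $\cL^n$-measure is $F(r_0) = \cL^n(\Omega)$. The one small geometric lemma I need is the identity
\[
d_{l^1}(x, B_\Omega) = \max(d_0(x) - r_0,\, 0) \qquad (x \in [0,1]^n).
\]
The lower bound is the triangle inequality applied to any $y \in B_\Omega$. For the upper bound, when $d_0(x) > r_0$ I contract $x$ toward the origin along the segment $\{(1-t)x : t \in [0,1]\} \subset [0,1]^n$; $d_0$ is linear along this segment, so after $l^1$-distance $d_0(x) - r_0$ it meets $\{d_0 = r_0\} \subset B_\Omega$. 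Consequently $U_r(B_\Omega) = \{d_0 < r_0 + r\}$, and by continuity of $F$,
\[
\cL^n(U_r(B_\Omega)) = F(\min(r_0 + r,\, n)).
\]

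To finish I would apply $\ICL(\nu)$ to $A := \Omega$ with $a := r_0$ and $b := \min(r_0 + r', n) \in \supp \nu$, for each $r' \in (0, r)$. The hypotheses $m_{[0,1]^n}(A) > 0$ and $F(a) = \cL^n(\Omega) = m_{[0,1]^n}(A)$ are satisfied, so $\ICL(\nu)$ gives
\[
F(b) \le m_{[0,1]^n}(B_{b - r_0}(\Omega)) \le \cL^n(U_r(\Omega)),
\]
the second inequality because $b - r_0 \le r' < r$ makes the $(b-r_0)$-neighborhood (open or closed) sit inside $U_r(\Omega)$. Letting $r' \uparrow r$ and using continuity of $F$ produces $\cL^n(U_r(B_\Omega)) = F(\min(r_0 + r, n)) \le \cL^n(U_r(\Omega))$.

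I do not expect a serious obstacle. The only bookkeeping is (i) reconciling the $r$-neighborhood implicit in the definition of $\ICL$ with the open $U_r$ of the corollary, which is handled by the $r' \uparrow r$ limit together with the continuity of $F$, and (ii) the edge case $r_0 + r > n$, which the truncation $b := \min(r_0 + r', n)$ absorbs cleanly since then eventually $F(b) = 1$ and the bound is automatic.
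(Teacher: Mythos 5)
Your proposal is correct and follows the same route the paper intends: the text immediately before the corollary derives $\ICL((d_0)_*m_{[0,1]^n})$ from Theorems \ref{thm:lOneIso} and \ref{thm:ns} and then declares the corollary to be ``namely'' a restatement. You have supplied the bookkeeping the paper leaves tacit --- the continuity of $F_\nu$, the identification $\supp\nu=[0,n]$, the identity $d_{l^1}(x,B_\Omega)=\max(d_0(x)-r_0,0)$ so that $U_r(B_\Omega)=\{d_0<r_0+r\}$, and the $r'\uparrow r$ limit to pass from the closed ball $B_{b-a}(A)$ in the definition of $\ICL$ to the open neighborhood $U_r(\Omega)$ in the corollary --- and each step checks out.
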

Similarly, we obtain the sharp isoperimetric inequality of the $l^1$-torus $T^n$ by using Corollary 6 in \cite{Bol:isop_torus}. The $l^1$-torus $T^n$ is the $n$-times $l^1$-product of one-dimensional sphere $S^1$ equipped with the uniform measure.
\begin{thm}\label{thm:lOneTorusIso}
$\xi_*m_{T^n}$ is the maximum of $\cM(T^n;1)$, where $\xi$ is the distance function from one point.
\end{thm}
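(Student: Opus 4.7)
The plan is to mirror the proof of Theorem \ref{thm:lOneIso}, substituting the discrete torus for the discrete cube and Corollary 6 of \cite{Bol:isop_torus} for Theorem 8 of \cite{Bol:comp}. Let $T_k^n$ denote the mm-space $(\Z/k\Z)^n$ endowed with the $l^1$-metric rescaled by $1/k$ (so that its diameter matches that of $T^n$) together with the uniform probability measure $m_k$. Corollary 6 of \cite{Bol:isop_torus} says that balls around a single vertex minimize the vertex boundary on $(\Z/k\Z)^n$. After rescaling, this is equivalent to saying that $T_k^n$ satisfies $\ICL((\xi_k)_*m_k)$, where $\xi_k$ is the distance from a chosen basepoint of $T_k^n$.

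Next I would apply Theorem \ref{thm:isoICL}\eqref{thm:isoICL:ICLtoDominant} to deduce that $(\xi_k)_*m_k$ is a $\Delta_k$-iso-dominant of $T_k^n$, where $\Delta_k = \Delta(\supp(\xi_k)_*m_k)$ is of order $1/k$ since the rescaled support has lattice spacing $1/k$; in particular $\Delta_k \to 0$. The remaining ingredients are the convergences $T_k^n \to T^n$ in $\dconc$ and $(\xi_k)_*m_k \to \xi_*m_{T^n}$ weakly, both of which I would obtain using the nearest-lattice-point projection $T^n \to T_k^n$: its distortion is $O(1/k)$, giving convergence in Gromov's box distance and hence in $\dconc$, and $\xi_k$ converges uniformly to $\xi$ under this identification, which yields the weak limit. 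Then Theorem \ref{thm:stabilityIso}, applied with $\ep_n := \Delta_k \to 0$, implies that $\xi_*m_{T^n}$ is a $0$-iso-dominant of $T^n$. By the zero-error equivalence of Theorem \ref{thm:order_dev:zero_equiv}, this is equivalent to being an iso-dominant in the Gromov sense; since $\xi$ is $1$-Lipschitz, $\xi_*m_{T^n} \in \cM(T^n;1)$, so this iso-dominant is itself a $1$-measurement, hence the maximum of $\cM(T^n;1)$.

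The main obstacle I expect is the technical bookkeeping around the discretization step: showing that the scaled discrete tori actually converge to $T^n$ in observable distance (not merely in Gromov--Hausdorff distance) and that the chosen identification produces weak convergence of the push-forwards compatibly with the isoperimetric comparison. Because the definitions of $\ICL_\ep$ and of $\ep$-iso-dominant mix additive errors with the metric geometry, one must track carefully how the lattice spacing $1/k$ enters both the concentration convergence of the spaces and the error $\Delta_k$ in Theorem \ref{thm:isoICL}, and confirm that all such errors vanish in the limit so that Theorem \ref{thm:stabilityIso} indeed yields an exact ($0$-)iso-dominant.
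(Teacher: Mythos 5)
Your proposal is correct and follows essentially the same route as the paper, which proves Theorem~\ref{thm:lOneTorusIso} simply by remarking that it goes ``in the same way as in the proof of Theorem~\ref{thm:lOneIso} by using Example~\ref{ex:lOneDiscreteTorusIso}'': get $\ICL$ for the discrete torus from Bollob\'as--Leader, convert to a $1$-iso-dominant via Theorem~\ref{thm:isoICL}\eqref{thm:isoICL:ICLtoDominant}, rescale (the paper uses Proposition~\ref{prop:isoScaling}; you absorb this by rescaling first, so $\Delta_k=1/k$ appears directly, which is equivalent), and pass to the limit using Lemma~\ref{lem:discretization}-type estimates and Theorem~\ref{thm:stabilityIso}. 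The only cosmetic point worth noting is that Example~\ref{ex:lOneDiscreteTorusIso} restricts to even $k$, so the limit should be taken along even integers, but this does not affect the argument.
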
 
\begin{cor}
For any closed subset $\Omega\subset T^n$ with $m_{T^n}(\Omega)>0$, we take a metric ball $B_\Omega$ of $T^n$ with $m_{T^n}(B_\Omega)=m_{T^n}(\Omega)$. Then we have 
\[
m_{T^n}(U_r(\Omega))\ge m_{T^n}(U_r(B_\Omega))
\]
for any $r>0$, where $U_r(A)$ is the open $r$-neighborhood of a subset $A\subset T^n$ with respect to the $l^1$-distance.
\end{cor}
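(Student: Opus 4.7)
The plan is to deduce the corollary from Theorem \ref{thm:lOneTorusIso} via Theorem \ref{thm:ns}, so that the $\ICL$ condition translates directly into the desired neighborhood comparison. Write $\nu:=\xi_* m_{T^n}$, and let $p\in T^n$ be the base point used to define $\xi$, so that $\xi(x)=d_{l^1}(x,p)$. By Theorem \ref{thm:lOneTorusIso}, $\nu$ is the maximum of $\cM(T^n;1)$ with respect to $\succ'$, and in particular $\nu$ is an iso-dominant of $T^n$.

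First I would verify the continuity hypothesis on $F_\nu$ required by Theorem \ref{thm:ns}. The uniform measure $m_{T^n}$ is atomless, and each level set $\xi^{-1}(\{t\})$ is contained in a finite union of pieces of affine hyperplanes of codimension at least one in $T^n$, hence of zero $m_{T^n}$-measure; therefore $\nu$ has no atoms and $F_\nu$ is continuous. Theorem \ref{thm:ns} then yields that $T^n$ satisfies $\ICL(\nu)$.

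Next, by the homogeneity of $T^n$ (translations act isometrically and preserve $m_{T^n}$), every metric ball of a given radius has the same measure, so I may replace the given $B_\Omega$ by the closed ball $\overline{B}_\rho(p)$ of equal measure centered at $p$; here $\rho\ge 0$ is chosen with $F_\nu(\rho)=m_{T^n}(\overline{B}_\rho(p))=m_{T^n}(\Omega)$, which exists by continuity and monotonicity of $F_\nu$ from $0$ to $1$. A short triangle-inequality argument gives $U_r(\overline{B}_\rho(p))=\{x\in T^n\mid d_{l^1}(x,p)<\rho+r\}$, whose $m_{T^n}$-measure equals $\nu([0,\rho+r))=F_\nu(\rho+r)$ by the continuity of $F_\nu$.

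Finally I would apply $\ICL(\nu)$ with $A:=\Omega$, $a:=\rho$, and $b:=\rho+s$ for $0<s<r$, truncating $b$ at $\sup\supp\nu$ if $\rho+s$ exceeds the support. Since $F_\nu(a)=m_{T^n}(\Omega)=m_{T^n}(A)$ and $b-a=s$, the $\ICL$ condition yields $F_\nu(\rho+s)\le m_{T^n}(B_{s}(\Omega))\le m_{T^n}(U_r(\Omega))$. Letting $s\to r^-$ and using the continuity of $F_\nu$ gives $F_\nu(\rho+r)\le m_{T^n}(U_r(\Omega))$, which combined with the previous paragraph is precisely the claim. The only delicate point is the open-versus-closed neighborhood passage at the end, handled entirely by the continuity of $F_\nu$; I do not anticipate any substantive obstacle beyond this bookkeeping.
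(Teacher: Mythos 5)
Your proposal is correct and follows the same route the paper intends: obtain $\ICL(\xi_*m_{T^n})$ from Theorem \ref{thm:lOneTorusIso} via Theorem \ref{thm:ns} (the continuity of the distribution function, which you verify by noting that the level sets of $\xi$ are null, being the hypothesis of that theorem), then unwind the definition of $\ICL$ to get the neighborhood comparison, just as the paper does for the cube before stating that the torus case is handled ``similarly.'' Your handling of the open-versus-closed boundary via $s\to r^-$, the truncation at $\sup\supp\nu$, and the use of homogeneity to center the comparison ball at $p$ are all sound bookkeeping steps that fill in the details the paper leaves implicit.
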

If the $1$-measurement $\cM(X;1)$ of an mm-space $X$ has the maximum element $\nu$, we obtain the precise value of the observable diameter $\ObsDiam(X;-\kappa)$ of  $X$ (Definition \ref{dfn:obsDiam}) because we have 
\[
\ObsDiam(X;-\kappa)=\diam(\nu;1-\kappa) \text{\quad for any $\kappa\in (0,1]$}.
\]
Thus, we obtain the value of $\ObsDiam([0,1]^n;-\kappa)$ and $\ObsDiam(T^n;-\kappa)$ for any $\kappa\in (0,1]$.
As former results, the $n$-dimensional unit sphere is known to be an mm-space whose $1$-measurement has the maximum element (see \S 9 in \cite{Gmv:isop}). The $n$-dimensional Gaussian space is also such an mm-space because of an isoperimetric inequality \cite{Bor:gauss,Sud:gauss}.

As another application of Theorem \ref{thm:stabilityIso}, we obtain the following, which is a variant of normal law \`a la L\'evy (see Theorem 2.2 in \cite{Shioya:mmg}) by using Theorem 13 in \cite{Bol:comp}. 
\begin{thm}[Normal law \`a la L\'evy on product graphs]\label{thm:NormalLevyProductGraphs}
Let $G_1,G_2,$\\
$\dots,G_n,\dots$ be connected graphs with same order $k\ge 2$. Put 
\[
\varepsilon_n:=\sqrt{\frac{12}{(k^2-1)n}}.
\]
Let $X_n:=(\prod_{i=1}^n G_i,d_{X_n},m_{X_n})$ be the cartesian product graph equipped with the path metric $d_{X_n}$ and the uniform measure $m_{X_n}$. Put $Y_n:=(\prod_{i=1}^n G_i,\ep_n\cdot d_{X_n},m_{X_n})$. Let $\{f_{n_i}\}$ be a subsequence of a sequence of 1-Lipschitz functions $f_n:Y_n\to \R, n=1,2,\dots$. If $(f_{n_i})_*m_{Y_{n_i}}$ converges weakly to a Borel probability measure $\sigma$, then we have $\gamma^1\succ'\sigma$, where $\gamma^1$ is the $1$-dimensionnal Gaussian measure.
\end{thm}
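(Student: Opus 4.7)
The plan is to combine Bollob\'as--Leader's isoperimetric inequality on products of equal-order connected graphs (Theorem~13 of \cite{Bol:comp}) with the classical central limit theorem, using closure of the iso-Lipschitz order with vanishing additive error under weak convergence to pass to a Gaussian limit. \textbf{First}, Theorem~13 of \cite{Bol:comp} asserts that the Cartesian product $X_n=\prod_{i=1}^n G_i$ of connected graphs of common order $k$ satisfies $\ICL(\mu_n)$, where $\mu_n:=(d_0)_*m_{[k]^n}$ is the distance-from-origin measure on the discrete $l^1$-cube $[k]^n$. Since $\supp\mu_n\subset\{0,1,\dots,n(k-1)\}$ consists of consecutive integers, $\Delta(\supp\mu_n)=1$; moreover the atom condition $\mu_n(\{0\})=k^{-n}=m_{X_n}(\{x\})$ holds for every $x\in X_n$. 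Theorem~\ref{thm:isoICL}\eqref{thm:isoICL:ICLtoDominant} therefore gives that $\mu_n$ is a $1$-iso-dominant of $X_n$.

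\textbf{Next}, rescale and center. The identity $\cM(Y_n;1)=L_{\ep_n\,*}\cM(X_n;1)$, with $L_\lambda(t)=\lambda t$, together with positive homogeneity of $\dev_\succ$ under simultaneous coordinate scaling, shows that $L_{\ep_n\,*}\mu_n$ is an $\ep_n$-iso-dominant of $Y_n$. Since $\dev_\succ S$ is unchanged by translating $S$ in the first coordinate, the order $\succ'_{(s,t)}$ is invariant under translation of the dominating measure; we may therefore center and set $\tilde\mu_n:=(T_{-m_n})_*L_{\ep_n\,*}\mu_n$ with $m_n:=\ep_n\cdot n(k-1)/2$, and $\tilde\mu_n$ remains an $\ep_n$-iso-dominant of $Y_n$. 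Now $\mu_n$ is the law of $X_1+\dots+X_n$ for i.i.d.\ variables uniform on $\{0,\dots,k-1\}$, of mean $(k-1)/2$ and variance $(k^2-1)/12$, and the choice of $\ep_n$ makes $\tilde\mu_n$ a centered measure of variance
\[
\ep_n^{\,2}\cdot n\cdot\frac{k^2-1}{12}=1,
\]
so the Lindeberg--L\'evy central limit theorem gives $\tilde\mu_n\to\gamma^1$ weakly.

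\textbf{Finally}, pass to the limit along the subsequence $\{n_i\}$. For each $i$ one has $\tilde\mu_{n_i}\succ'_{(\ep_{n_i},0)}(f_{n_i})_*m_{Y_{n_i}}$, while $\tilde\mu_{n_i}\to\gamma^1$ and $(f_{n_i})_*m_{Y_{n_i}}\to\sigma$ weakly and $\ep_{n_i}\to 0$. Invoking the closure of the iso-Lipschitz order with vanishing additive error under joint weak convergence of both measures (provided by the machinery of Section~\ref{section:isoLipschitzOrder} together with Theorem~\ref{thm:order_dev:zero_equiv}) yields $\gamma^1\succ'_{(0,0)}\sigma$, which is exactly $\gamma^1\succ'\sigma$.

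The main obstacle is the closure property used in the last step: one must show that if $\mu_n\to\mu$ and $\nu_n\to\nu$ weakly with $\mu_n\succ'_{(s_n,0)}\nu_n$ and $s_n\to 0$, then $\mu\succ'_{(0,0)}\nu$. The natural approach is to invoke tightness of $\Pi(\mu_n,\nu_n)$ and Prohorov's theorem to extract a weak limit plan $\pi\in\Pi(\mu,\nu)$, and then to assemble a Borel set $S\subset\R^2$ with $\dev_\succ S=0$ and full $\pi$-mass out of the approximating sets $S_n$; controlling Borel measurability in the limit is the subtle point, and precisely what the formalism of Section~\ref{section:isoLipschitzOrder} is designed to handle.
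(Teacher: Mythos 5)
Your proof is correct and, in two respects, more careful than the paper's own.  The skeleton is the same: Bollob\'as--Leader gives $\ICL((d_0)_*m_{[k]^n})$ for $X_n$, hence via Theorem~\ref{thm:isoICL}\eqref{thm:isoICL:ICLtoDominant} (or, in the paper, directly via Example~\ref{ex:lOneDiscreteIso}) a $1$-iso-dominant, which Proposition~\ref{prop:isoScaling} rescales to an $\ep_n$-iso-dominant of $Y_n$; then a central-limit argument and a stability statement carry the relation to the Gaussian.  Where you diverge, and improve: first, you explicitly center $\tilde\mu_n:=(T_{-m_n})_*L_{\ep_n\,*}\mu_n$ before invoking the CLT.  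This is necessary -- the mean of $(\ep_n d_0)_*m_{[k]^n}$ is $\ep_n n(k-1)/2=\sqrt{12n/(k^2-1)}\,(k-1)/2\to\infty$, so the uncentered measures do not converge weakly to $\gamma^1$ as the paper's proof asserts; your observation that $\succ'_{(s,t)}$ is translation-invariant (the iso-mm-isomorphism equivalence) legitimizes the recentering.  Second, your limit step stays at the level of measures on $\R$, whereas the paper routes through $\dconc$ and the pyramid stability Theorem~\ref{thm:isoStabilityPyramid} applied to $\cY_n=\{Y_n\}$; that application is awkward because one would need to realize $\sigma$ as a $1$-measurement of some $\Box$-limit of the $Y_n$, which is not established (the $Y_n$ concentrate rather than $\Box$-converge).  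Your route is direct: since $\tilde\mu_{n_i}\to\gamma^1$ and $(f_{n_i})_*m_{Y_{n_i}}\to\sigma$ weakly with $\tilde\mu_{n_i}\succ'_{(\ep_{n_i},0)}(f_{n_i})_*m_{Y_{n_i}}$, one applies Lemma~\ref{lem:order_dev:Prohorov} on each side, the transitivity Theorem~\ref{thm:order_dev:transitive} to chain the three relations, and the non-degeneracy Theorem~\ref{thm:order_dev:nonDegenerate} to remove the vanishing errors, then Theorem~\ref{thm:order_dev:zero_equiv} to pass from $\succ'_{(0,0)}$ to $\succ'$.  Your closing paragraph sketches re-proving Theorem~\ref{thm:order_dev:nonDegenerate} from scratch; it would be cleaner simply to cite the three results just named, which already package exactly the closure property you need.
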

In the case that $k=2$, we see that $X_n$ is the $n$-dimensionnal Hamming cube. If we replace $X_n$ by $n$-dimensional (non-discrete) $l^1$-cube or $n$-dimansional (non-discrete) $l^1$-torus, we obtain normal law \`a la L\'evy respectively.

\section{Preliminaries}\label{preliminaries}

In this section, we present some basics of mm-space.
We refer to \cite{Gmv:green,Shioya:mmg} for more details about this section.
\subsection{Some basics of \mmsp}\label{basics}
\begin{dfn}[\mmsp]
Let $(X,\dx)$ be a complete separable metric space and $m_X$ a Borel probability measure on $X$.
We call such a triple $(X,\dx,\mux)$ an {\it\mmsp}.
We sometimes say that $X$ is an \mmsp, for which the metric and measure of $X$ are respectively indicated by $\dx$ and $\mux$. We put $tX:=(X, td_X,m_X)$ for $t>0$.
\end{dfn}

We denote the Borel $\sigma$-algebra over $X$ by $\mathcal B_X$.
For any point $x\in X$, any two subsets $A,B\subset X$ and any real number $r>0$, we define
\begin{align*}
d_X(x,A)&:=\inf_{y\in A} d_X(x,y),\\
d_X(A,B)&:=\inf_{x\in A,\,y\in B}d_X(x,y),\\
U_r(A)&:=\set{y\in X\mid d_X(y,A)<r},\\
B_r(A)&:=\set{y\in X\mid d_X(y,A)\leq r}.
\end{align*}

Let $p:X\to Y$ be a measurable map from a measure space $(X,\mux)$ to a topological space $Y$. {\it The push-forward of $\mux$ by the map $p$} is defined as $p_*\mux(A):=\mux(p^{-1}(A))$ for any $A\in \mathcal B_Y$.
\begin{dfn}[mm-isomorphism]
Two \mmsp s $X$ and $Y$ are said to be {\it mm-isomorphic} to each other if there exists an isometry $f:\supp\mux\to\supp\muy$ such that $f_*\mux=\muy$, where $\supp\mux$ is the support of $\mux$.
Such an isometry $f$ is called an {\it mm-isomorphism}.
The mm-isomorphism relation is an equivalence relation on the set of mm-spaces.
Denote by $\mathcal X$ the set of mm-isomorphism classes of \mmsp s.
\end{dfn}
\begin{dfn}[Lipschitz order]\label{def:Lip_ord}
Let $X$ and $Y$ be two \mmsp s.
We say that $X$ {\it dominates} $Y$ and write $Y\prec X$ if there exists a 1-Lipschitz map $f:X\to Y$ satisfying
\[
f_*\mux=\muy.
\]
We call the relation $\prec$ on $\mathcal X$ the {\it Lipschitz order}.
\end{dfn}
\begin{prop}[Proposition 2.11 in \cite{Shioya:mmg}]\label{prop:lipPartialOrder}
The Lipschitz order $\prec$ is a partial order relation on $\mathcal X$.
\end{prop}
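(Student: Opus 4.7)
The plan is to verify the three axioms of a partial order on $\cX$: reflexivity, transitivity, and antisymmetry. Reflexivity is immediate from $\id \colon X \to X$, which is $1$-Lipschitz with $\id_* m_X = m_X$. For transitivity, if $Z \prec Y$ via $g \colon Y \to Z$ and $Y \prec X$ via $f \colon X \to Y$, then $g \circ f \colon X \to Z$ is $1$-Lipschitz as a composition of $1$-Lipschitz maps, and $(g \circ f)_* m_X = g_*(f_* m_X) = g_* m_Y = m_Z$.

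The substance lies in antisymmetry. Assume $f \colon X \to Y$ and $g \colon Y \to X$ are both $1$-Lipschitz with $f_* m_X = m_Y$ and $g_* m_Y = m_X$; I aim to produce an mm-isomorphism, and the natural candidate is $f$ restricted to $\supp m_X$. The key intermediate step is to show that the composite $h := g \circ f \colon X \to X$, which is $1$-Lipschitz and satisfies $h_* m_X = m_X$, is an isometry on $\supp m_X$.

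For this I would apply the Poincar\'e recurrence theorem to the product system $(X \times X, m_X \otimes m_X, h \times h)$, using that $h \times h$ preserves $m_X \otimes m_X$. For $(m_X \otimes m_X)$-a.e.\ pair $(x, y)$ one obtains a sequence $n_k \to \infty$ with $(h^{n_k}(x), h^{n_k}(y)) \to (x, y)$. The sequence $a_n := d_X(h^n(x), h^n(y))$ is non-increasing by the $1$-Lipschitz property and starts at $a_0 = d_X(x, y)$, but the subsequence $a_{n_k}$ converges to $d_X(x, y)$, forcing $a_n \equiv d_X(x, y)$ and in particular $d_X(h(x), h(y)) = d_X(x, y)$. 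Continuity of $h$ together with the density of such pairs in $\supp m_X \times \supp m_X$ propagates the equality to all of $\supp m_X$.

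Granted this, the chain
\[
d_X(x, y) = d_X(g(f(x)), g(f(y))) \le d_Y(f(x), f(y)) \le d_X(x, y)
\]
forces $f|_{\supp m_X}$ to be an isometry into $Y$, whose image is closed (being complete) and satisfies $f(\supp m_X) = \overline{f(\supp m_X)} = \supp(f_* m_X) = \supp m_Y$; thus $f$ restricts to an mm-isomorphism. The main obstacle is the isometry claim for $h$: single-point recurrence does not suffice, so one must pass to the product system to get simultaneous return of $x$ and $y$, after which the monotonicity-plus-subsequence trick converts recurrence into exact distance preservation.
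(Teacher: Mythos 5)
Your proof is correct, and since the paper states this proposition only by citation to Shioya's book without reproducing an argument, there is no in-paper proof to compare against. Reflexivity and transitivity are the routine observations you give, and the reduction of antisymmetry to showing that the $1$-Lipschitz, measure-preserving self-map $h=g\circ f$ restricts to an isometry of $\supp m_X$ is the standard strategy; your closing steps (the squeeze $d_X(x,y)=d_X(h(x),h(y))\le d_Y(f(x),f(y))\le d_X(x,y)$, completeness of the isometric image $f(\supp m_X)$, and $\supp(f_*m_X)=\overline{f(\supp m_X)}$) are all sound. Where you diverge from the usual route is in how the isometry is produced: you pass to the product system $(X\times X,\,m_X\otimes m_X,\,h\times h)$, invoke Poincar\'e recurrence for separable metric spaces, and use monotonicity of $n\mapsto d_X(h^nx,h^ny)$ along a returning orbit. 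This is valid, and your remark that single-point recurrence is useless so one must go to the product is exactly the right diagnosis. However, the dynamical machinery is more than is needed. Pick any bounded, continuous, strictly increasing $\phi\colon[0,\infty)\to\R$. Since $(h\times h)_*(m_X\otimes m_X)=m_X\otimes m_X$, a change of variables gives
\[
\int \phi\bigl(d_X(h(x),h(y))\bigr)\,d(m_X\otimes m_X)=\int \phi\bigl(d_X(x,y)\bigr)\,d(m_X\otimes m_X),
\]
while the left integrand is pointwise $\le$ the right one because $h$ is $1$-Lipschitz and $\phi$ is increasing; equality of these finite integrals forces the integrands to agree $m_X\otimes m_X$-a.e., and injectivity of $\phi$ then gives $d_X(h(x),h(y))=d_X(x,y)$ a.e., after which your density-and-continuity step upgrades this to all of $\supp m_X\times\supp m_X$. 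This avoids Poincar\'e recurrence entirely, does not even require iterating $h$, and is essentially the argument in the cited reference; your dynamical version trades that economy for a nice illustration of the product-recurrence trick, at no cost to correctness.
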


\begin{dfn}[Transport plan]\label{dfn:trans}
Let $\mu$ and $\nu$ be two Borel probability measures on $\R$. We say that a Borel probability measure on $\R^2$ is a transport plan between $\mu$ and $\nu$ if we have $(\pr_1)_*\pi=\mu$ and $(\pr_2)_*\pi=\nu$, where $\pr_1$ and $\pr_2$ is the first and second projection respectively. We denote by $\Pi(\mu,\nu)$ the set of transport plans between $\mu$ and $\nu$.
\end{dfn}

\subsection{Observable diameter and partial diameter}
Observable diameter is one of the most important invariants. We remark that this is defined by the 1-measurement.
\begin{dfn}[Partial diameter]
Let $X$ be an \mmsp.
For any real number $\alpha\in[0,1]$, we define the {\it partial diameter $\diam(X;\alpha)=\diam(\mux;\alpha)$ of $X$} as
\[
\diam(X;\alpha):=\inf\set{\diam A\mid \mux(A)\geq \alpha, \, A\in\mathcal B_X},
\]
where the {\it diameter of $A$} is defined by $\diam A:=\sup_{x,y\in A}d_X(x,y)$ for $A\neq\emptyset$ and $\diam \emptyset:=0$.
\end{dfn}
\begin{dfn}[Observable diameter]\label{dfn:obsDiam}
Let $X$ be an \mmsp.
For any real number $\kappa\in[0,1]$, we define the {\it $\kappa$-observable diameter \\
$\ObsDiam(X;-\kappa)$ of $X$} as
\[
\ObsDiam(X;-\kappa):=\sup_{\mu\in\mathcal M(X;1)}\diam(\mu;1-\kappa).
\]
\end{dfn}
\begin{prop}[Proposition 2.18 in \cite{Shioya:mmg}]\label{prop:Lip_invariant}
Let $X$ and $Y$ be two \mmsp s and $\kappa\in[0,1]$ a real number.
If $Y\prec X$, then we obtain
\begin{align*}
\diam(Y;1-\kappa)&\leq\diam(X;1-\kappa),\\
\ObsDiam(Y;-\kappa)&\leq\ObsDiam(X;-\kappa).
\end{align*}
\end{prop}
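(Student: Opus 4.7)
The plan is to unfold the hypothesis $Y \prec X$ into a 1-Lipschitz map $f : X \to Y$ with $f_*\mux = \muy$ and to transport the two invariants back along $f$. Both inequalities reduce to the fact that pushing forward by a 1-Lipschitz measure-preserving map can only improve the relevant quantities.

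For the partial diameter, fix $\epsilon > 0$ and choose a Borel set $A \subset X$ with $\mux(A) \ge 1-\kappa$ and $\diam A \le \diam(X;1-\kappa) + \epsilon$. I take $B := \overline{f(A)} \subset Y$. Since $f$ is 1-Lipschitz, $\diam f(A) \le \diam A$, and closure does not change the diameter, so $\diam B \le \diam A$. The set $B$ is closed, hence Borel, and from $f^{-1}(B) \supset f^{-1}(f(A)) \supset A$ we get
\[
\muy(B) = \mux(f^{-1}(B)) \ge \mux(A) \ge 1-\kappa.
\]
Therefore $\diam(Y;1-\kappa) \le \diam B \le \diam(X;1-\kappa) + \epsilon$, and letting $\epsilon \to 0$ proves the first inequality. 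The only mild issue is that $f(A)$ need not itself be Borel; passing to the closure neatly bypasses this without enlarging the diameter, and this is the one step that requires a moment of care.

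For the observable diameter it is enough to establish the inclusion $\cM(Y;1) \subset \cM(X;1)$. Given $\mu = \varphi_*\muy$ with $\varphi : Y \to \R$ a 1-Lipschitz function, the composition $\varphi \circ f : X \to \R$ is 1-Lipschitz and
\[
(\varphi \circ f)_*\mux = \varphi_*(f_*\mux) = \varphi_*\muy = \mu,
\]
so $\mu \in \cM(X;1)$. Taking the supremum of $\diam(\,\cdot\,;1-\kappa)$ over the smaller family $\cM(Y;1)$ can only produce a smaller number, which gives $\ObsDiam(Y;-\kappa) \le \ObsDiam(X;-\kappa)$. (Alternatively, one could deduce the observable bound from the partial diameter bound by noting that the line with measure $\mu$ is $\prec$-dominated by $Y$ and hence by $X$, so the partial diameter of $\mu$ is controlled; but composing 1-Lipschitz maps is the most direct route.)
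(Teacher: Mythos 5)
The paper does not reprove this proposition: it is imported verbatim from Shioya's book as a known fact, so there is no in-paper argument to compare against. Your proof is correct and is the standard one: for the partial diameter, transport a near-optimal Borel set forward by $f$, pass to the closure to stay in the Borel $\sigma$-algebra without increasing diameter, and use $\mu_Y(\overline{f(A)}) = \mu_X(f^{-1}(\overline{f(A)})) \ge \mu_X(A)$; for the observable diameter, observe $\mathcal{M}(Y;1)\subset\mathcal{M}(X;1)$ via precomposition with $f$, so the defining supremum can only grow. Both steps are exactly what the cited Proposition 2.18 in Shioya's book does, and your remark about the closure is precisely the small measurability point that the argument needs.
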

\subsection{L{\'e}vy's isoperimetric inequality}
Let $S^n(r)$ be the $n$-dimensional sphere of radius $r>0$ centered at the origin in the $(n+1)$-dimensional Euclidean space $\R^{n+1}$. We assume the distance $d_{S^n(r)}(x,y)$ between two points $x$ and $y$ in $S^n(r)$ to be the geodesic distance and the measure $m_{S^n(r)}$ on $S^n(r)$ to be the Riemannian volume measure on $S^n(r)$ normalized as $m_{S^n(r)}(S^n(r))=1$.
Then, $(S^n(r),\,d_{S^n(r)},\,m_{S^n(r)})$ is an \mmsp.
\begin{thm}[L{\'e}vy's isoperimetric inequality \cite{Levy:iso,Milman:iso}]\label{thm:levy_ineq}
For any closed subset $\Omega\subset S^n(1)$, we take a metric ball $B_\Omega$ of $S^n(1)$ with $m_{S^n(1)}(B_\Omega)=m_{S^n(1)}(\Omega)$.
Then we have
\[
m_{S^n(1)}(U_r(\Omega))\geq m_{S^n(1)} (U_r(B_\Omega))
\]
for any $r>0$.
\end{thm}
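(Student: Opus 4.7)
The plan is to prove Theorem~\ref{thm:levy_ineq} by iterated two-point symmetrization (polarization) on the sphere, following the classical L\'evy--Milman argument. Fix a north pole $p_0 \in S^n(1)$ and let $B_\Omega$ denote the closed spherical cap centered at $p_0$ with $m_{S^n(1)}(B_\Omega) = m_{S^n(1)}(\Omega)$.

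First I would set up a single polarization step. For each hyperplane $H \subset \R^{n+1}$ through the origin with unit normal $v$ satisfying $\langle v, p_0\rangle \ge 0$, let $\sigma_H$ denote the restriction of the Euclidean reflection across $H$ to $S^n(1)$, and put $H^+ := \{x \in S^n(1) \mid \langle x, v\rangle \ge 0\}$. Define the polarization
\[
\Omega^H := (\Omega \cap \sigma_H(\Omega)) \cup \bigl((\Omega \cup \sigma_H(\Omega)) \cap H^+\bigr).
\]
The two facts I would check are: (i) $m_{S^n(1)}(\Omega^H) = m_{S^n(1)}(\Omega)$, which is immediate because $\sigma_H$ is a measure-preserving involution and the definition redistributes the two preimages of each unordered pair $\{x,\sigma_H(x)\}$; and (ii) $U_r(\Omega^H) \subseteq (U_r(\Omega))^H$ for every $r>0$. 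The key ingredient in (ii) is the elementary spherical fact that whenever $x \in H^+$ and $y \in H^-$, one has $d_{S^n(1)}(x,\sigma_H(y)) \le d_{S^n(1)}(x,y)$. A short case analysis on whether the realizing point of $U_r(\Omega^H)$ lies in $\Omega\cap\sigma_H(\Omega)$ or only in $(\Omega\cup\sigma_H(\Omega))\cap H^+$, and whether $y\in H^+$ or $y\in H^-$, closes (ii). Combining (i) and (ii) yields $m_{S^n(1)}(U_r(\Omega^H)) \le m_{S^n(1)}(U_r(\Omega))$.

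Next I would iterate. Choose a countable family $\{H_k\}$ of hyperplanes through the origin whose normals are dense in the closed upper half-sphere $\{v \mid \langle v,p_0\rangle\ge 0\}$, set $\Omega_0:=\Omega$ and $\Omega_k := \Omega_{k-1}^{H_{\pi(k)}}$ for a surjection $\pi:\N\to\N$ hitting each index infinitely often, and observe that $m_{S^n(1)}(U_r(\Omega_k))$ is non-increasing in $k$ by the single-step inequality. By weak compactness of $\{1_{\Omega_k}\}$ in $L^2(S^n(1))$, extract a weak limit $f$; a monotonicity argument tracking the $L^2$-distance from $1_{\Omega_k}$ to $1_{B_\Omega}$ (which is non-increasing under each polarization and strictly decreases unless $\Omega_k$ is already $\sigma_H$-symmetric for the chosen $H$) forces $f$ to be $\sigma_{H_k}$-symmetric for every $k$. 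Density of the normals then identifies $f$ with $1_{B_\Omega}$. Passing to the limit via lower semicontinuity of $A \mapsto m_{S^n(1)}(U_r(A))$ under measure-theoretic convergence of the $1_{\Omega_k}$ completes the proof.

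The main obstacle will be the third step: rigorously controlling the convergence of the iterated polarizations and identifying the limit as the cap $B_\Omega$ rather than some other rotationally symmetric set. The local facts (i) and (ii) are routine, but making the ``almost every polarization brings $\Omega$ strictly closer to a cap'' mechanism precise requires a careful choice of the sequence $\{H_k\}$ together with a compactness-plus-monotonicity argument, and care in passing the inequality $m_{S^n(1)}(U_r(\Omega_k)) \le m_{S^n(1)}(U_r(\Omega))$ to the limit.
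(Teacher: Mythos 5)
The paper quotes this theorem from the classical literature (\cite{Levy:iso,Milman:iso}) without a proof, so there is no in-paper argument to compare yours against. Your overall strategy of iterated spherical polarization is a recognized route to L\'evy's inequality, and your steps (i) and (ii) are sound: polarization preserves measure, and the inclusion $U_r(\Omega^H)\subseteq (U_r(\Omega))^H$ does follow by the case analysis you describe from the reflection estimate $d_{S^n(1)}(x,\sigma_H(y))\le d_{S^n(1)}(x,y)$ for $x\in H^+$, $y\in H^-$.

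The convergence step, however, does not close as written. A weak $L^2$ limit of characteristic functions need not be a characteristic function; it is a priori only a density $f$ with $0\le f\le 1$, and symmetry of such an $f$ under every $\sigma_{H_k}$ identifies it at best as a radially symmetric \emph{function} of the colatitude, not as $1_{B_\Omega}$. To recover a set in the limit one must establish \emph{strong} $L^1$ convergence $\|1_{\Omega_k}-1_{B_\Omega}\|_{L^1}\to 0$, and this is where the real work lies: one has to choose the polarizing hyperplanes adaptively (e.g.\ so that each $H_k$ nearly realizes the supremal one-step decrease of $\|1_{\,\cdot\,}-1_{B_\Omega}\|_{L^1}$ over all admissible hyperplanes), and the argument that the limiting value of this quantity must be zero is considerably more delicate than weak compactness plus density of normals; this is precisely the content of the polarization-convergence theorems of Brock--Solynin and Van Schaftingen. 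Moreover, the lower semicontinuity of $A\mapsto m_{S^n(1)}(U_r(A))$ that you invoke is not a generic consequence of measure-theoretic convergence; it holds in your situation only because the limit is the cap $B_\Omega$: for $0<s<r$ there is a uniform positive lower bound on $m_{S^n(1)}(U_{r-s}(y)\cap B_\Omega)$ over $y\in B_\Omega$, which forces $U_s(B_\Omega)\subset U_r(\Omega_k)$ once $m_{S^n(1)}(B_\Omega\setminus\Omega_k)$ is small, after which one lets $s\uparrow r$. Both of these repairs must be supplied before the passage to the limit is valid.
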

\subsection{Box distance}
In this subsection, we briefly describe the box distance.
\begin{dfn}[Parameter]
Let $I:=[0,1)$ and let $\mathcal L^1$ be the one-dimensional Lebesgue measure on $I$.
Let $X$ be a topological space with a Borel probability measure $\mux$.
A map $\varphi:I\to X$ is called a {\it parameter of $X$} if $\varphi$ is a Borel measurable map such that
\[
\varphi_*\mathcal L^1=\mux.
\]
\end{dfn}
\begin{dfn}[Pseudo-metric]
A {\it pseudo-metric $\rho$ on a set $S$} is defined to be a function $\rho:S\times S\to[0,\infty)$ satisfying that, for any $x,y,z\in S$,
\begin{enumerate}
\item $\rho(x,x)=0,$
\item $\rho(y,x)=\rho(x,y),$
\item $\rho(x,z)\leq\rho(x,y)+\rho(y,z)$.
\end{enumerate}
\end{dfn}
\begin{dfn}[Box distance]\label{dfn:box}
For two pseudo-metrics $\rho_1$ and $\rho_2$ on $I:=[0,1)$, we define $\square(\rho_1,\rho_2)$ to be the infimum of $\varepsilon\geq 0$ satisfying that there exists a Borel subset $I_0\subset I$ such that
\begin{enumerate}
\item $|\rho_1(s,t)-\rho_2(s,t)|\leq\varepsilon$ for any $s,t\in I_0$,
\item $\mathcal L^1(I_0)\geq 1-\varepsilon$.
\end{enumerate}
We define the {\it box distance $\square(X,Y)$ between two \mmsp s $X$ and $Y$} to be the infimum of $\square(\varphi^*d_X,\psi^*d_Y)$, where $\varphi:I\to X$ and $\psi:I\to Y$ run over all parameters of $X$ and $Y$, respectively, and where $\varphi^*\dx(s,t):=\dx(\varphi(s),\varphi(t))$ for $s,t\in I$.
\end{dfn}
\begin{thm}[Theorem 4.10 in \cite{Shioya:mmg}]
The box distance $\square$ is a metric on the set $\mathcal X$ of mm-isomorphism classes of \mmsp s.
\end{thm}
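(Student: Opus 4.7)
The plan is to verify the four defining properties of a metric on $\mathcal{X}$: non-negativity, symmetry, the triangle inequality, and the implication that $\square(X,Y)=0$ forces $X$ and $Y$ to be mm-isomorphic. Non-negativity is built into the infimum over non-negative $\varepsilon$, and symmetry is immediate from the symmetric form of the definition of $\square(\rho_1,\rho_2)$. Reflexivity $\square(X,X)=0$ follows by taking a single parameter $\varphi$ of $X$ on both sides with $I_0 = I$ and $\varepsilon = 0$.

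For the triangle inequality $\square(X,Z) \leq \square(X,Y) + \square(Y,Z)$, I would fix $\varepsilon_1 > \square(X,Y)$ and $\varepsilon_2 > \square(Y,Z)$ and pick parameters $\varphi: I \to X$, $\varphi': I \to Y$ realizing the first bound on a Borel set $I_1 \subset I$ with $\mathcal{L}^1(I_1) \geq 1 - \varepsilon_1$, together with $\psi: I \to Y$, $\psi': I \to Z$ realizing the second on $I_2$ with $\mathcal{L}^1(I_2) \geq 1 - \varepsilon_2$. The key technical step is aligning the two parameters $\varphi'$ and $\psi$ of $Y$, which both push $\mathcal{L}^1$ forward to $m_Y$; using a standard measure-theoretic reparametrization on the standard Borel probability space $(I, \mathcal{L}^1)$, one replaces them by a single parameter of $Y$ and correspondingly adjusted parameters of $X$ and $Z$. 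After this alignment, the restriction to $I_1 \cap I_2$ has measure at least $1 - \varepsilon_1 - \varepsilon_2$, and the triangle inequality for pseudo-metrics gives the desired bound after taking $\varepsilon_1, \varepsilon_2$ to their infima.

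For the positivity direction, suppose $\square(X,Y)=0$ and choose parameters $\varphi_n: I \to X$, $\psi_n: I \to Y$ and Borel sets $I_n \subset I$ with $\mathcal{L}^1(I_n) \geq 1 - 1/n$ on which $|\varphi_n^* d_X - \psi_n^* d_Y| \leq 1/n$. I would fix a countable dense subset $\{t_k\}$ of the full-measure set $\limsup_n I_n$ and, via a diagonal subsequence extraction, arrange that $d_X(\varphi_n(t_i), \varphi_n(t_j))$ and $d_Y(\psi_n(t_i), \psi_n(t_j))$ converge to a common limit $D(i,j)$ for every pair $i,j$. Using separability and completeness of $X$ and $Y$, together with the fact that the sequences $\{\varphi_n(t_k)\}_n$ and $\{\psi_n(t_k)\}_n$ remain in bounded regions around anchor points, one extracts (after further subsequence) limit points $x_k \in X$ and $y_k \in Y$ satisfying $d_X(x_i,x_j) = d_Y(y_i,y_j) = D(i,j)$. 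The assignment $x_k \mapsto y_k$ extends by continuity to an isometry between the closures of $\{x_k\}$ and $\{y_k\}$, and a final measure-theoretic argument identifies these closures with $\supp m_X$ and $\supp m_Y$ respectively and shows that the isometry pushes $m_X$ forward to $m_Y$.

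The main obstacle will be the positivity direction—specifically, guaranteeing that the limit points $x_k, y_k$ actually exist rather than merely giving bounded sequences, and verifying that the resulting isometry is defined on all of $\supp m_X$ with the correct pushforward. Controlling this requires a careful combination of separability, completeness, and the almost-everywhere nature of the good sets $I_n$, together with a rigorous passage from pointwise convergence of distances on a countable set to the measure-theoretic statement $f_* m_X = m_Y$. The triangle inequality's reparametrization lemma is also subtly technical, since one must ensure that simultaneously replacing both parameters of $Y$ by a common one does not degrade the bounds obtained for the other two spaces.
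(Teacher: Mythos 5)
The paper does not prove this theorem; it merely cites it as Theorem~4.10 of Shioya's book, so there is no in-paper argument to compare against. Commenting on your proposal on its own terms:

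Your outline has a genuine gap in the positivity direction, and you correctly flag where the trouble is but your proposed resolution does not work. You wish to extract limit points $x_k\in X$, $y_k\in Y$ from the sequences $\{\varphi_n(t_k)\}_n$ and $\{\psi_n(t_k)\}_n$ by invoking ``bounded regions around anchor points'' together with separability and completeness. But in a general complete separable metric space a bounded sequence need not have a convergent subsequence; boundedness is not precompactness. The theorem is stated for arbitrary mm-spaces, which are merely Polish, not proper. For instance, if $X$ is an infinite-dimensional Hilbert space with some Borel probability measure, the points $\varphi_n(t_k)$ can wander inside a fixed-radius ball without accumulating, even though the pseudo-metrics $\varphi_n^* d_X$ behave well. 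Diagonalization over $k$ and $n$ does not cure this, and tightness of $m_X$ only controls the measure of where $\varphi_n$ lands, not the behavior at any fixed parameter value $t_k$.

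The standard repair is to trade pointwise compactness for compactness of measures. Form the couplings
\[
\pi_n := (\varphi_n,\psi_n)_*\,\mathcal{L}^1 \in \Pi(m_X,m_Y)
\]
on $X\times Y$. Since the marginals $m_X$ and $m_Y$ are tight Borel probability measures on Polish spaces, the family $\{\pi_n\}$ is tight, and Prohorov's theorem yields a weakly convergent subsequence $\pi_{n_i}\to\pi\in\Pi(m_X,m_Y)$. The hypothesis $\square(X,Y)=0$ then forces $\supp\pi$ to lie in the graph of an isometry between $\supp m_X$ and $\supp m_Y$ (one checks that $d_X(x,x')=d_Y(y,y')$ for $\pi\otimes\pi$-a.e.\ pair $((x,y),(x',y'))$, and that the fiber over each $x$ is a single point), and the marginal conditions give $f_*m_X=m_Y$. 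This is the robust version of what you were attempting.

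Your sketch for the triangle inequality is the right idea, but the ``standard measure-theoretic reparametrization'' aligning two parameters $\varphi',\psi$ of the middle space $Y$ is itself a lemma that must be stated and proved (or cited), not dismissed as standard. The clean formulation is that $\inf_\psi\square(\varphi^*d_X,\psi^*d_Y)$ does not depend on the choice of parameter $\varphi$ of $X$, which one establishes via a common-coupling argument for two parameters of the same mm-space; only then does fixing a single parameter of $Y$ and chaining the pseudo-metric triangle inequality go through. As written, this is an acknowledged but unresolved dependency in your proof.
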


\begin{dfn}[Obsevable distance]
For two Borel measurable maps $f,g:I:=[0,1)\to\R$, we define the {\it Ky Fan metric} $d_{\rm KF}$ by
\[
d_{\rm KF}(f,g):=\inf\set{\ep\ge 0\mid \cL^1(\set{t\in I\mid |f(t)-g(t)|>\ep})\le\ep}.
\]
For a parameter $\varphi$ of an mm-space $X$, we define 
\[
\varphi^*\Lip_1(X):=\set{f\circ \varphi\mid \text{$f:X\to \R$ is 1-Lipschitz}}.
\]
The Hausdorff distance $d_{\rm H}^{\rm KF}$ is defined with respect to $d_{\rm KF}$.
We define the {\it observable distance} $\dconc$ between two mm-spaces $X$ and $Y$ by
\[
\dconc(X,Y):=\inf_{\varphi,\psi} d_{\rm H}^{\rm KF}(\varphi^*\Lip_1(X),\psi^*\Lip_1(Y))
\]
where $\varphi:I\to X$ and $\psi:I\to Y$ are two parameters of $X$ and $Y$ respectively.
\end{dfn}
\begin{thm}[Theorem 5.13 in \cite{Shioya:mmg}]
$\dconc$ is a metric on $\cX$.
\end{thm}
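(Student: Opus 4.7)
The plan is to verify the three metric axioms in turn, with non-degeneracy being the principal obstacle.

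Non-negativity and symmetry of $\dconc$ are immediate from the definition: $d_{\rm H}^{\rm KF}$ inherits non-negativity and symmetry from the Ky Fan metric $d_{\rm KF}$, and these properties are preserved by the infimum over parameters. Moreover $\dconc(X,X)=0$ by taking $\varphi=\psi$, since then $\varphi^*\Lip_1(X)=\psi^*\Lip_1(X)$.

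For the triangle inequality $\dconc(X,Z)\le\dconc(X,Y)+\dconc(Y,Z)$, I would fix $\ep>0$ and choose parameters $\varphi_1:I\to X$, $\psi_1:I\to Y$ realizing $\dconc(X,Y)$ to within $\ep$, together with parameters $\psi_2:I\to Y$, $\chi_2:I\to Z$ realizing $\dconc(Y,Z)$ to within $\ep$. The subtlety is that $\psi_1$ and $\psi_2$ are generally distinct parameters of $Y$, so the two Hausdorff bounds live in different subsets of Borel maps $I\to\R$. I would resolve this by passing to a common parameter: since $(I,\cL^1)$ is a standard non-atomic probability space, any two parameters of $Y$ are related by a measure-preserving Borel isomorphism (after removing null sets), and pre-composition preserves both $\Lip_1(Y)$ and $d_{\rm KF}$. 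Once the parameters of $Y$ are aligned, the triangle inequality for $d_{\rm KF}$ combined with the standard Hausdorff-triangle argument yields $\dconc(X,Z)\le\dconc(X,Y)+\dconc(Y,Z)+2\ep$, and $\ep\to 0$ finishes this step.

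The hard part is non-degeneracy: one must show that $\dconc(X,Y)=0$ forces $X$ to be mm-isomorphic to $Y$. My approach would be to reduce to the corresponding property for the box metric $\square$, which is already known to be a metric on $\cX$. Concretely, fixing a countable dense subset $\{x_i\}\subset\supp m_X$, the 1-Lipschitz distance functions $d_X(\cdot,x_i)$ reconstruct the metric via $d_X(x,y)=\sup_i |d_X(x,x_i)-d_X(y,x_i)|$. If $\dconc(X,Y)=0$, then along suitable parameters each pulled-back function $d_X(\varphi(\cdot),x_i)$ is arbitrarily close in $d_{\rm KF}$ to the pull-back of some 1-Lipschitz function on $Y$. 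A diagonal argument applied to the countable family $\{x_i\}$ then produces a single Borel set $I_0\subset I$ of large measure on which the two pulled-back metrics agree up to small error, which is exactly the definition of $\square$ being small; the non-degeneracy of $\square$ then delivers an mm-isomorphism.

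The principal obstacle is this diagonalization step: converting countably many individual Ky Fan approximations into one uniform box-metric approximation, without losing control of either the measure of the good set $I_0$ or the uniform $\sup_i$ over the countable family of distance functions. This requires careful summable allocation of the error across the indices $i$ and is where the substantive content of the theorem lies.
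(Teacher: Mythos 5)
The paper does not prove this statement: it is cited verbatim as Theorem~5.13 of Shioya's monograph \emph{Metric measure geometry} and recorded without proof, so there is no in-paper argument to compare against. I will therefore assess your outline on its own terms.

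Your high-level route is the standard one (non-negativity and symmetry come for free from the Ky Fan metric; the triangle inequality is obtained by aligning parameters of the middle space; non-degeneracy is reduced to that of the box distance $\square$). Two steps, however, are not yet sound as written. First, in the triangle inequality you assert that ``any two parameters of $Y$ are related by a measure-preserving Borel isomorphism (after removing null sets).'' This is a genuine theorem about standard probability spaces, not a formality, and it is load-bearing here; you should either cite it or prove it, and in fact the cleaner route used in the literature is to construct a third parameter compatible with both, rather than conjugating one into the other.

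Second, and more seriously, the mechanism you propose for non-degeneracy does not match the structure of the problem. The bound $\dH^{\rm KF}(\varphi^*\Lip_1(X),\psi^*\Lip_1(Y))<\ep$ gives a single tolerance $\ep$, uniform over all $1$-Lipschitz functions: each pulled-back distance function $d_X(\varphi(\cdot),x_i)$ is matched to some $g_i\circ\psi$ with a bad set of $\cL^1$-measure at most $\ep$, and this $\ep$ is the same for every index $i$. There is therefore no ``summable allocation'' available, and the union of the bad sets over the countable family $\{x_i\}$ can have full measure. The missing ingredient is inner regularity of $m_X$: choose a compact $K\subset\supp\mux$ with $\mux(K)\ge 1-\eta$, take a finite $\eta$-net $\{x_1,\dots,x_N\}\subset K$, reconstruct $d_X$ on $K$ to within $2\eta$ from these finitely many distance functions, and then take $\ep\ll\eta/N$ so that the union of the $N$ bad sets has small measure; finally send $\eta\to 0$. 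Without this reduction to a finite net there is nothing for your diagonal argument to diagonalize against. You should also carry out the symmetric step with a finite net in $Y$ and intersect the two good sets, since matching $d_X(\cdot,x_i)$ to $1$-Lipschitz $g_i$ on $Y$ only yields $\varphi^*d_X\le\psi^*d_Y+O(\eta)$ on the good set, and the reverse inequality must come from matching $d_Y(\cdot,y_j)$ to $1$-Lipschitz functions on $X$.
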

\begin{prop}[Proposition 5.5 in \cite{Shioya:mmg}]\label{prop:concBox}
For two mm-spaces $X$ and $Y$, we have
$\dconc(X,Y)\le \square (X,Y)$.
\end{prop}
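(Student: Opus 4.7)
The plan is to construct, for each 1-Lipschitz function on one space, a nearby 1-Lipschitz function on the other, using a McShane-type extension built from the parameter-based approximate isometry supplied by the box distance.

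Let $\varepsilon > \square(X,Y)$ be arbitrary. By Definition \ref{dfn:box} I would choose parameters $\varphi : I \to X$ and $\psi : I \to Y$ together with a Borel subset $I_0 \subset I$ satisfying $\cL^1(I_0) \ge 1 - \varepsilon$ and
\[
| d_X(\varphi(s), \varphi(t)) - d_Y(\psi(s), \psi(t)) | \le \varepsilon \qquad \text{for all } s, t \in I_0.
\]
Given any 1-Lipschitz function $f : X \to \R$, define $g : Y \to \R$ by
\[
g(y) := \inf_{s \in I_0} \bigl( f(\varphi(s)) + d_Y(y, \psi(s)) \bigr).
\]

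The verification proceeds in three steps. First, $g$ is well defined and 1-Lipschitz on $Y$: fixing any $t_0 \in I_0$, the lower bound $f(\varphi(s)) \ge f(\varphi(t_0)) - d_X(\varphi(s), \varphi(t_0))$ combined with the $\varepsilon$-isometry on $I_0$ gives $f(\varphi(s)) \ge f(\varphi(t_0)) - d_Y(\psi(s), \psi(t_0)) - \varepsilon$, after which the triangle inequality in $Y$ yields a uniform lower bound for the infimum; the 1-Lipschitz property of $g$ is then immediate since it is an infimum of 1-Lipschitz functions of $y$. Second, for $t \in I_0$ I claim $|g(\psi(t)) - f(\varphi(t))| \le \varepsilon$: plugging $s = t$ gives the upper bound, and for every $s \in I_0$ one has
\[
f(\varphi(s)) + d_Y(\psi(t), \psi(s)) \ge f(\varphi(s)) + d_X(\varphi(t), \varphi(s)) - \varepsilon \ge f(\varphi(t)) - \varepsilon,
\]
using the $\varepsilon$-isometry and the 1-Lipschitz property of $f$. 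Third, the set $\{t \in I : |f(\varphi(t)) - g(\psi(t))| > \varepsilon\}$ is contained in $I \setminus I_0$ and hence has Lebesgue measure at most $\varepsilon$, so $d_{\rm KF}(f \circ \varphi, g \circ \psi) \le \varepsilon$.

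Swapping the roles of $X$ and $Y$ yields the symmetric one-sided approximation, hence $d_{\rm H}^{\rm KF}(\varphi^*\Lip_1(X), \psi^*\Lip_1(Y)) \le \varepsilon$, and therefore $\dconc(X,Y) \le \varepsilon$. Letting $\varepsilon \downarrow \square(X,Y)$ completes the argument. The main obstacle is the finiteness of the McShane infimum when the ambient spaces are non-compact and $f$ is unbounded below; this is precisely where restricting the infimum to $s \in I_0$ is essential, since only there is the $d_X \leftrightarrow d_Y$ comparison available to convert the 1-Lipschitz growth of $f$ in $X$ into a usable lower bound in $Y$.
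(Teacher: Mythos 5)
Your argument is correct and coincides with the standard proof of Proposition 5.5 in Shioya's book, which this paper simply cites without reproducing: one picks parameters $\varphi,\psi$ and a good set $I_0$ realizing the box distance up to $\varepsilon$, and transfers a $1$-Lipschitz $f$ on $X$ to $Y$ via a McShane-type infimum $g(y)=\inf_{s\in I_0}\bigl(f(\varphi(s))+d_Y(y,\psi(s))\bigr)$, so that $|f\circ\varphi - g\circ\psi|\le\varepsilon$ on $I_0$. Your handling of finiteness of the infimum (using a fixed $t_0\in I_0$, the $\varepsilon$-isometry, and the triangle inequality) is exactly the point that makes the extension legitimate, and the symmetric argument then gives the Hausdorff bound for $d_{\rm KF}$; letting $\varepsilon\downarrow\square(X,Y)$ finishes the proof.
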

\section{The iso-Lipschitz order with an error}\label{section:isoLipschitzOrder}

In this section, we present some properties of the iso-Lipschitz order with an error.

\begin{dfn}[iso-mm-isomorphic]
Two Borel probability measures $\mu$ and $\nu$ on $\R$ are said to be {\it iso-mm-isomorphic} to each other if there exists a real number $c$ such that $(\id_\R+c)_*\mu=\nu$, where $\id_\R$ is the identity function on $\R$. The iso-mm-isomorphic relation is an equivalence relation on the set of Borel probability measures on $\R$.
\end{dfn}

\begin{prop}
The iso-Lipschitz order is a partial order on the set of iso-mm-isomorphism class of Borel probability measures on $\R$.
\end{prop}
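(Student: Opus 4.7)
The plan is to verify each of the three order axioms on the quotient after first checking that $\succ'$ descends to iso-mm-isomorphism classes. For well-definedness, suppose $\tilde\mu=(\id_\R+c)_*\mu$, $\tilde\nu=(\id_\R+d)_*\nu$, and $f:\supp\mu\to\supp\nu$ witnesses $\mu\succ'\nu$. Then the shifted map $\tilde f(x):=f(x-c)+d$ is a non-decreasing $1$-Lipschitz map from $\supp\tilde\mu$ to $\supp\tilde\nu$ with $\tilde f_*\tilde\mu=\tilde\nu$, so $\tilde\mu\succ'\tilde\nu$. Reflexivity is immediate via $f=\id_{\supp\mu}$, and transitivity follows because a composition of non-decreasing $1$-Lipschitz maps is again non-decreasing and $1$-Lipschitz, while push-forwards compose.

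The substantial point is antisymmetry. Suppose $\mu\succ'\nu$ via $f$ and $\nu\succ'\mu$ via $g$, and put $h:=g\circ f:\supp\mu\to\supp\mu$; then $h$ is non-decreasing, $1$-Lipschitz (hence continuous), and $h_*\mu=\mu$. The key lemma I would prove is that any such $h$ must equal $\id_{\supp\mu}$. Let $G_\mu(t):=\inf\set{x\in\R\mid F_\mu(x)\ge t}$ be the left-continuous quantile function on $(0,1)$, so that $(G_\mu)_*\Leb{(0,1)}=\mu$. The composition $h\circ G_\mu$ is non-decreasing and pushes $\Leb{(0,1)}$ forward to $h_*\mu=\mu$, so by uniqueness of the non-decreasing rearrangement (any non-decreasing measurable $T:(0,1)\to\R$ with $T_*\Leb{(0,1)}=\mu$ satisfies $\set{t\mid T(t)\le y}=(0,F_\mu(y)]$ up to a Lebesgue-null set, forcing $T=G_\mu$ a.e.) we obtain $h\circ G_\mu=G_\mu$ almost everywhere. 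Consequently $h(x)=x$ for $\mu$-almost every $x\in\supp\mu$; since a $\mu$-full-measure subset of $\supp\mu$ is automatically dense in $\supp\mu$, the continuity of $h$ promotes this to $h=\id$ on all of $\supp\mu$.

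Applying the lemma to both $g\circ f$ and $f\circ g$ shows that $f$ and $g$ are mutually inverse $1$-Lipschitz bijections between $\supp\mu$ and $\supp\nu$, hence isometries. A non-decreasing isometry between subsets of $\R$ must be a translation $f(x)=x+c$, from which $(\id_\R+c)_*\mu=\nu$ and $\mu$, $\nu$ are iso-mm-isomorphic, yielding antisymmetry on the quotient.

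The step I expect to be the main obstacle is establishing and applying the uniqueness of the non-decreasing rearrangement in the presence of atoms and of a possibly disconnected $\supp\mu$, and then cleanly upgrading the $\mu$-a.e.\ identity $h=\id$ to an everywhere identity on $\supp\mu$. The remaining verifications are essentially bookkeeping.
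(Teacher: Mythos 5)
The paper states this proposition without providing a proof, so there is no in-text argument to compare against; the closest the paper comes is the appeal, two paragraphs earlier, to Proposition~2.11 of Shioya's book (that the Lipschitz order on mm-spaces is a partial order). Your argument is a self-contained proof of the iso-Lipschitz analogue and it is correct. Well-definedness on iso-mm-isomorphism classes, reflexivity, and transitivity are handled exactly as one would expect. The substance is antisymmetry, and your reduction to the lemma that a non-decreasing, $1$-Lipschitz self-map $h$ of $\supp\mu$ with $h_*\mu=\mu$ must be the identity is the right move: the quantile function $G_\mu$ lands in $\supp\mu$, the composition $h\circ G_\mu$ is again a non-decreasing function pushing Lebesgue measure to $\mu$, uniqueness of the monotone rearrangement (two non-decreasing $T$ with the same push-forward can differ only at their at most countably many jump points) gives $h\circ G_\mu=G_\mu$ a.e., hence $h=\id$ $\mu$-a.e., and continuity plus density of a $\mu$-full set in $\supp\mu$ upgrades this to $h=\id$ everywhere. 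Applying the lemma to $g\circ f$ and $f\circ g$ yields mutually inverse $1$-Lipschitz bijections, hence an isometry, and a non-decreasing isometry between subsets of $\R$ is a translation. This route is arguably cleaner than reducing to the antisymmetry of the general Lipschitz order: monotonicity lets the quantile function do all the work at once, and one gets the translation statement directly rather than first producing some unknown mm-isomorphism and then having to rule out orientation-reversing ones. One small remark you could add for completeness is why $G_\mu(t)\in\supp\mu$ for every $t\in(0,1)$ (right-continuity of $F_\mu$ makes $\mu$ of any neighborhood of $G_\mu(t)$ positive), since that is what makes $h\circ G_\mu$ well defined; but this is a standard fact and does not affect correctness.
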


\begin{prop}\label{prop:dev_closure}
For a subset $S\subset \R^2$, we have
\[
\dev_\succ S=\dev_\succ\overline{S}.
\]
\end{prop}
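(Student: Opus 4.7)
The plan is to prove the two inequalities separately. The inclusion $\dev_\succ S \le \dev_\succ \overline{S}$ is immediate from the definition of $\dev_\succ$ as a supremum, together with $S \subset \overline{S}$, so the entire task reduces to showing $\dev_\succ \overline{S} \le \dev_\succ S$.

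For the reverse inequality, I would use a straightforward continuity/approximation argument. Fix any two points $(x,y), (x',y') \in \overline{S}$ and choose sequences $(x_n,y_n), (x'_n,y'_n) \in S$ converging to $(x,y)$ and $(x',y')$ respectively. The key observation is that the function $\R^4 \to \R$ sending $(a,b,a',b') \mapsto b - b' - \max\{a - a', 0\}$ is continuous, since both subtraction and $\max\{\cdot, 0\}$ are continuous. Therefore
\[
y_n - y'_n - \max\{x_n - x'_n, 0\} \longrightarrow y - y' - \max\{x - x', 0\}.
\]
Each term on the left is bounded above by $\dev_\succ S$, so passing to the limit yields $y - y' - \max\{x - x', 0\} \le \dev_\succ S$. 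Taking the supremum over all pairs $(x,y), (x',y') \in \overline{S}$ gives $\dev_\succ \overline{S} \le \dev_\succ S$, which completes the proof.

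There is no real obstacle here; the only thing to be mindful of is the continuity of $\max\{\cdot,0\}$, which is elementary. The proof is essentially an instance of the general fact that a supremum of a continuous function over a set equals its supremum over the closure.
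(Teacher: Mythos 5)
Your proof is correct. The paper states this proposition without proof, and your argument — the easy inequality from $S\subset\overline{S}$, plus continuity of $(a,b,a',b')\mapsto b-b'-\max\{a-a',0\}$ together with a sequential approximation to get the reverse inequality — is exactly the natural argument the author must have had in mind. The one degenerate case worth a silent check is $S=\emptyset$, where both sides are the supremum over an empty index set and hence equal by convention, but this does not affect the substance of the argument.
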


\begin{lem}\label{lem:dev_minus}
Let $S\subset \R^2$. For any two points $(x,y),(x',y')\in S$, we have
\[
|y-y'|-|x-x'|\le \dev_\succ S.
\]
\end{lem}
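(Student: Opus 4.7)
The plan is to reduce to a single sign configuration by symmetry and then apply a one-line comparison to the defining expression of $\dev_\succ S$.

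First I would note that the inequality to be proved is symmetric in the two points: swapping $(x,y)$ and $(x',y')$ leaves both $|y-y'|-|x-x'|$ and $\dev_\succ S$ unchanged. Thus I can assume without loss of generality that $y \ge y'$, in which case $|y-y'| = y - y'$.

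Next, the key observation is the elementary inequality $|x - x'| \ge \max\{x-x',0\}$, which holds for any real $x, x'$ (both sides equal $x-x'$ when $x \ge x'$, and $|x-x'| \ge 0 = \max\{x-x',0\}$ when $x < x'$). Applying this gives
\[
|y-y'| - |x-x'| \;=\; (y - y') - |x - x'| \;\le\; (y - y') - \max\{x - x', 0\}.
\]
The right-hand side is exactly one of the expressions over which the supremum defining $\dev_\succ S$ is taken (using the pair $(x,y), (x',y') \in S$ in the order that makes $y \ge y'$), so it is bounded above by $\dev_\succ S$. Chaining the two inequalities completes the argument.

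There is no real obstacle here; the only subtlety is that the definition of $\dev_\succ S$ is not manifestly symmetric in its two arguments (the term $\max\{x-x',0\}$ breaks the symmetry between the two coordinates), so one must pick the correct ordering of the two points before comparing with a supremand. The WLOG reduction to $y \ge y'$ accomplishes precisely this.
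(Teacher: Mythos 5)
Your proof is correct and follows the paper's own argument essentially verbatim: both reduce by symmetry to the case $y \ge y'$ and then chain $|y-y'|-|x-x'| \le (y-y')-\max\{x-x',0\} \le \dev_\succ S$. The only difference is that you spell out the elementary inequality $|x-x'|\ge\max\{x-x',0\}$ explicitly, which the paper leaves implicit.
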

\begin{proof}
Take any $(x,y),(x',y')\in S$. By symmetry, we may assume that $y\ge y'$. Then we have
\[
|y-y'|-|x-x'|\le y-y'-\max\set{x-x',0}\le\dev_\succ S.
\]
This completes proof.
\end{proof}
\begin{thm}\label{thm:order_dev:zero_equiv}
Let $\mu$ and $\nu$ be two Borel probability measures on $\R$. Then we have $\mu\succ'\nu$ if and only if $\mu\succ'_{(0,0)}\nu$.
\end{thm}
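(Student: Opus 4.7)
My approach is to prove both directions directly from the definitions. The forward implication is essentially tautological: given a monotone non-decreasing 1-Lipschitz $f:\supp\mu\to\supp\nu$ with $f_*\mu=\nu$, I would set $\pi:=(\id_\R,f)_*\mu$ and $S:=\set{(x,f(x))\mid x\in\supp\mu}$. Then $\pi\in\Pi(\mu,\nu)$ and $\pi(S)=1$ by construction, and the monotonicity and 1-Lipschitz conditions on $f$ translate immediately into $\dev_\succ S\le 0$.

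For the converse, suppose $\pi\in\Pi(\mu,\nu)$ and a Borel $S\subset\R^2$ realize $\mu\succ'_{(0,0)}\nu$. First I would invoke Proposition \ref{prop:dev_closure} to replace $S$ by its closure, so we may assume $S$ is closed. The key structural claim is that $S$ is the graph of a partial function: if $(x,y),(x,y')\in S$ share the same first coordinate, then Lemma \ref{lem:dev_minus} forces $|y-y'|\le\dev_\succ S=0$. Writing $A_0:=\pr_1(S)$ and defining $g:A_0\to\R$ by $(x,g(x))\in S$, unpacking $\dev_\succ S=0$ directly yields that $g$ is monotone non-decreasing and 1-Lipschitz on $A_0$.

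The remaining step is to upgrade $g$ to a function $f:\supp\mu\to\supp\nu$ realizing $\mu\succ'\nu$. Since $(\pr_1)_*\pi=\mu$ and $\pi(S)=1$, the set $A_0$ has $\mu$-outer measure $1$, so $A:=A_0\cap\supp\mu$ is dense in $\supp\mu$; the 1-Lipschitz function $g|_A$ then extends uniquely by continuity to a 1-Lipschitz, monotone non-decreasing $f:\supp\mu\to\R$. To see that $f_*\mu=\nu$, I would exploit that $\pi$ is concentrated on the graph of $g$ to compute, for every Borel $C\subset\R$,
\[
\nu(C)=\pi(\pr_2^{-1}(C)\cap S)=\pi(\pr_1^{-1}(g^{-1}(C))\cap S)=\mu(g^{-1}(C))=f_*\mu(C),
\]
where the last equality uses $f=g$ on $A$ and $\mu(A)=1$. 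Finally, $f^{-1}(\supp\nu)$ has full $\mu$-measure, so is dense in $\supp\mu$; closedness of $\supp\nu$ and continuity of $f$ then force $f(\supp\mu)\subset\supp\nu$.

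The main technical nuisance I expect is the measurability of $A_0=\pr_1(S)$, which is only analytic (hence universally measurable) rather than Borel in general; this is handled by interpreting the identities above in the $\mu$-completion. Beyond that, the proof is simply an unpacking of definitions.
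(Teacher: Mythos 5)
Your proposal is correct and follows essentially the same route as the paper. Both proofs (i) replace the abstract Borel set by a closed one via Proposition~\ref{prop:dev_closure}, (ii) deduce from $\dev_\succ=0$ and Lemma~\ref{lem:dev_minus} that the set is the graph of a monotone non-decreasing $1$-Lipschitz partial function, (iii) extend that function to all of $\supp\mu$ by a density/Cauchy-sequence argument, and (iv) push the identity $\pi(A\times B)=\mu(A\cap f^{-1}(B))$ through to conclude $f_*\mu=\nu$. The only cosmetic difference is that the paper works from the outset with $\supp\pi$ (noting $\supp\pi\subset\overline S$, so $\dev_\succ\supp\pi=0$), whereas you keep $\overline S$; this also means your worry about $\pr_1(S)$ being merely analytic is unnecessary: once $S$ is closed in $\R^2$ it is $\sigma$-compact, so $\pr_1(S)$ is $F_\sigma$ and hence Borel, exactly as for $\pr_1(\supp\pi)$ in the paper.
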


\begin{proof}
Assume that $\mu\succ'\nu$. Then, there exists a monotone non-decreasing 1-Lipschitz function $f:\supp\mu\to\supp\nu$ such that $f_*\mu=\nu$.
We put $\pi:=(\id_\R,f)_*\mu\in\Pi(\mu,\nu)$. Let us prove $\dev_\succ\supp\pi=0$.
It suffices to prove $\dev_\succ ((\id_\R,f)(\supp\mu))=0$ because of Proposition \ref{prop:dev_closure} and $\supp\pi=\overline{(\id_\R,f)(\supp\mu)}$. 
Take any two points $(x_1,y_1),(x_2,y_2)\in\supp\pi=(\id_\R,f)(\supp\mu)$.
Then, we have $x_1,x_2\in \supp\mu$ and $y_1=f(x_1), y_2=f(x_2)$. In the case that $x_1\ge x_2$, we have 
\begin{align*}
y_1-y_2-\max\set{x_1-x_2,0}&=f(x_1)-f(x_2)-|x_1-x_2|\\
&\le|f(x_1)-f(x_2)|-|x_1-x_2|\le 0
\end{align*}
because $f$ is 1-Lipschitz. In the case that $x_1\le x_2$, we have $f(x_1)\le f(x_2)$ since $f$ is monotone non-decreasing.
Then we have
\begin{align*}
y_1-y_2-\max\set{x_1-x_2,0}&=y_1-y_2
=f(x_1)-f(x_2)\le 0.
\end{align*}
Therefore we obtain $\dev_\succ\supp\pi= 0$.
It follows that $\mu\succ_{(0,0)}\nu$.

Conversely, assume that $\mu\succ_{(0,0)}\nu$.
Then there exists $\pi\in\Pi(\mu,\nu)$ such that $\dev_\succ\supp\pi=0$.
Now, for any $x\in\supp\mu$, there exists a unique point $y\in \supp \nu$ such that $(x,y)\in\supp\pi$.
Let us prove the existence of $y$. Take any $x\in\supp\mu$. 
Since we have
\[
\supp\mu=\supp (\mathrm{pr}_1)_*\mu=\overline{\mathrm{pr}_1(\supp\pi)},
\]
there exists $\{(x_n,y_n) \}_{n\in\N}\subset\supp\pi$ such that $x_n$ converges to $x$. 
By Proposition \ref{lem:dev_minus}, we have
\[
|y_m-y_n|-|x_m,x_n|\le\dev_\succ\supp\pi=0
\]
for any positive integers $m$ and $n$. This means that $\{y_n \}$ is a Cauchy sequence. Therefore, $\{y_n \}$ converges to some $y\in\R$. Since $\supp\pi$ is closed, we have $(x,y)\in\supp\pi$. 
In addition, we have
\[
y\in\mathrm{pr}_2(\supp\pi)\subset \supp (\mathrm{pr}_2)_*\pi=\supp \nu.
\]
The uniqueness of $y\in\supp\nu$ follows from $\dev_\succ\supp\pi=0$ and Proposition \ref{lem:dev_minus}. Now, we define a function $f:\supp\mu\to\supp\nu$ by $f(x):=y$ for $x\in\supp\mu$, where $y\in\supp\nu$ satisfies $(x,y)\in\supp\pi$. By $\dev_\succ\supp\pi=0$ and Proposition \ref{lem:dev_minus}, $f$ is a $1$-Lipschtiz function. Let us prove that $f$ is monotone non-decreasing. Take any $x,x'\in\supp\mu$ with $x\le x'$. Then we have
\[
f(x)-f(x')=f(x)-f(x')-\max\set{x-x',0}\le\dev_\succ\supp\pi=0.
\]
The rest of the proof is to show $f_*\mu=\nu$.
Now, we have $\supp\pi=\set{(x,f(x))\mid x\in\supp\mu}$ by the definition of $f$. Therefore, we have
\[
(A\times B)\cap \supp\pi=\{(A\cap f^{-1}(B))\times Y \}\cap \supp\pi
\]
for any Borel sets $A\subset X$ and $B\subset Y$.
Since
\begin{align*}
\pi(A\times B)&=\pi((A\cap f^{-1}(B))\times Y)\\
&=\mu(A
\cap f^{-1}(B))\\
&=(\mathrm{id}_\R,f)_*\mu(A\times B),
\end{align*}
we have $\pi=(\mathrm{id}_\R,f)_*\mu$, which implies $\nu=(\mathrm{pr}_2)_*\pi=f_*\mu$. This completes the proof.
\end{proof}

\begin{prop}\label{prop:order_dev:deviationConti}
Let $d_{l^1}$ be the $l^1$-distance $d_{l^1}((x,y),(x',y')):=|x-x'|+|y-y'|$ on $\R^2$ and $d_H$ the Hausdorff distance with respect to $d_{l^1}$. For any two closed subsets $S,S'\subset \R^2$, we have
\[
|\dev_\succ S-\dev_\succ S'|\le 2d_H(S,S').
\]
\end{prop}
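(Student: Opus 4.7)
The plan is to exploit the fact that the function $h(x,y,x',y') := y - y' - \max\{x-x', 0\}$ is $1$-Lipschitz separately in its first pair $(x,y)$ and second pair $(x',y')$ of arguments when $\R^2$ is equipped with the $l^1$-distance. Indeed, using that $t \mapsto \max\{t,0\}$ is $1$-Lipschitz on $\R$, a direct estimate gives
\[
|h(x,y,x',y') - h(a,b,a',b')| \le |y-b| + |y'-b'| + |(x-x')-(a-a')|,
\]
which is bounded by $d_{l^1}((x,y),(a,b)) + d_{l^1}((x',y'),(a',b'))$.

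Given this, I would argue as follows. Assume without loss of generality that $d_H(S,S')$ and $\dev_\succ S'$ are finite (otherwise the statement is trivial). Fix an arbitrary $\eta > 0$. For any pair $(x,y),(x',y') \in S$, use the definition of the Hausdorff distance to choose $(a,b),(a',b') \in S'$ with
\[
d_{l^1}((x,y),(a,b)) \le d_H(S,S') + \eta, \qquad d_{l^1}((x',y'),(a',b')) \le d_H(S,S') + \eta.
\]
Combining with the Lipschitz estimate and the definition of $\dev_\succ S'$ yields
\[
y - y' - \max\{x-x',0\} \le h(a,b,a',b') + 2d_H(S,S') + 2\eta \le \dev_\succ S' + 2 d_H(S,S') + 2\eta.
\]
Taking the supremum over $(x,y),(x',y') \in S$ and then sending $\eta \to 0$ gives $\dev_\succ S \le \dev_\succ S' + 2 d_H(S,S')$. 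Swapping the roles of $S$ and $S'$ produces the reverse inequality, and the proposition follows.

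The argument is essentially a soft Lipschitz continuity computation, so I do not expect a serious obstacle; the only point requiring slight care is the verification that $t \mapsto \max\{t,0\}$ is $1$-Lipschitz so that the $|x-a|$ and $|x'-a'|$ terms each contribute with coefficient at most $1$ (rather than $2$), which is what ultimately gives the sharp constant $2$ on the right-hand side.
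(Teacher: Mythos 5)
Your proof is correct and follows essentially the same approach as the paper: both exploit the Lipschitz continuity of $(x,y,x',y') \mapsto y - y' - \max\{x - x', 0\}$ (via the $1$-Lipschitz property of $t \mapsto \max\{t,0\}$) to transfer the deviation bound from one set to a nearby set, picking up a contribution of $2d_H(S,S')$. The only cosmetic difference is that the paper routes through the intermediate inequality $\dev_\succ U_\varepsilon(S) \le \dev_\succ S + 2\varepsilon$, whereas you pair points directly; the underlying computation is the same.
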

\begin{proof}
Take any real number $\ep>0$ with $\varepsilon>d_H(S,S')$. We have $S'\subset U_\varepsilon (S)$. Let us prove $\dev_\succ U_\varepsilon(S)\le \dev_\succ S+2\varepsilon$. Take a point $(x_i,y_i)\in U_\varepsilon (S)$ for $i=1,2$.
Then there exists $(x'_i,y'_i)\in S$ such that $d_{l^1}((x_i,y_i),(x'_i,y'_i))<\varepsilon$.
Now, we have
\begin{align*}
&y_1-y_2-\max\set{x_1-x_2,0}\\
&= y'_1-y'_2+(y_1-y'_1)+(y_2-y'_2)\\
&\qquad -\max\set{x'_1-x'_2+(x_1-x'_1)+(x'_1-x'_2),0}\\
&\le y'_1-y'_2+|y_1-y'_1|+|y_2-y'_2|\\
&\qquad-\max\set{x'_1-x'_2-|x_1-x'_1|-|x'_1-x'_2|,0}\\
&\le y'_1-y'_2+|y_1-y'_1|+|y_2-y'_2|\\
&\qquad-(\max\set{x'_1-x'_2,0}-|x_1-x'_1|-|x'_1-x'_2|)\\
&\le y'_1-y'_2-\max\set{x'_1-x'_2,0}+2\varepsilon\\
&\le \dev_\succ S+2\varepsilon.
\end{align*}
Therefore we obtain
\[
\dev_\succ S'\le \dev_\succ U_\varepsilon(S)\le \dev_\succ S+2\varepsilon.
\]
This implies $\dev_\succ S'-\dev_\succ S\le 2d_H(S,S')$. By exchanging $S$ for $S'$, we also obtain $\dev_\succ S-\dev_\succ S'\le 2d_H(S,S')$.
\end{proof}

\begin{thm}\label{thm:order_dev:nonDegenerate}
Let $\mu$ and $\nu$ be two Borel probability measures on $\R$ and $s,t\ge 0$. If $\mu\succ'_{(s+\ep,t+\ep)}\nu$ for any $\ep>0$, then we have $\mu\succ'_{(s,t)}\nu$. 
\end{thm}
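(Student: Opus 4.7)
The plan is a weak-compactness argument. Choose $\ep_n \downarrow 0$ and, for each $n$, witnesses $\pi_n \in \Pi(\mu,\nu)$ and a Borel set $S_n \subset \R^2$ with $\dev_\succ S_n \le s + \ep_n$ and $\pi_n(S_n) \ge 1 - t - \ep_n$. By Proposition \ref{prop:dev_closure}, replacing $S_n$ by $\overline{S_n}$ preserves the iso-deviation bound and only enlarges $\pi_n(S_n)$, so I may assume each $S_n$ is closed. Set $\sigma_n := \pi_n|_{S_n}$; this is a sub-probability measure on $\R^2$ with $\supp \sigma_n \subset S_n$ and $\sigma_n(\R^2) \ge 1 - t - \ep_n$.

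Because $\mu$ and $\nu$ are tight on $\R$, the family $\Pi(\mu,\nu)$ is tight on $\R^2$, and Prohorov's theorem extracts a subsequence along which $\pi_n$ converges weakly to some $\pi \in \Pi(\mu,\nu)$ (the marginals persist in the limit because the projections $\pr_1,\pr_2$ are continuous). The dominated families $\sigma_n$ and $\pi_n-\sigma_n$ are also tight, so, after passing to further subsequences, arrange $\sigma_n \Rightarrow \sigma$ and $\pi_n - \sigma_n \Rightarrow \tau$ for finite non-negative measures $\sigma,\tau$; by linearity $\pi = \sigma + \tau$, in particular $\sigma \le \pi$. Testing against the bounded continuous constant function $1$ gives $\sigma(\R^2) = \lim_n \sigma_n(\R^2) \ge 1 - t$.

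Set $S := \supp \sigma$, which is closed. From $\sigma \le \pi$ one obtains $\pi(S) \ge \sigma(S) = \sigma(\R^2) \ge 1 - t$, i.e.\ $1 - \pi(S) \le t$. To verify $\dev_\succ S \le s$, take any $(x,y),(x',y') \in S$ and, for each $k \ge 1$, the open Euclidean balls $B_k, B'_k$ of radius $1/k$ about these two points. Since $\sigma(B_k), \sigma(B'_k) > 0$, the Portmanteau theorem ($\liminf \sigma_n(U) \ge \sigma(U)$ on open sets) yields an index $n_k$ (which may be taken with $n_k \to \infty$) such that $\sigma_{n_k}(B_k) > 0$ and $\sigma_{n_k}(B'_k) > 0$ simultaneously; choose $(x_k,y_k) \in B_k \cap S_{n_k}$ and $(x'_k,y'_k) \in B'_k \cap S_{n_k}$, which is possible because $\supp \sigma_n \subset S_n$. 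The defining inequality gives
\[
y_k - y'_k - \max\{x_k - x'_k, 0\} \le \dev_\succ S_{n_k} \le s + \ep_{n_k},
\]
and letting $k \to \infty$ yields $y - y' - \max\{x - x', 0\} \le s$, so that $\dev_\succ S \le s$.

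The main obstacle is coordinating the two approximating sequences at a common index: a naive Kuratowski-limit approach on the sets $S_n$ themselves could produce approximations of $(x,y)$ and $(x',y')$ in two different $S_n$'s, breaking the deviation bound. Restricting $\pi_n$ to $S_n$ to form $\sigma_n$ and invoking Portmanteau simultaneously on both open balls is the trick that synchronizes the limits and unlocks the estimate.
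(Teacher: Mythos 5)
Your proof is correct, and it takes a genuinely different route from the paper's. The paper also begins by extracting a weakly convergent subsequence $\pi_n \Rightarrow \pi$ via Prohorov, but then works directly with the \emph{sets} $S_n$: it chooses a monotone exhaustion by compacta $K_m$ controlling $\sup_n \pi_n(K_m^c)$, uses compactness of $(\mathcal F(K_m),d_H)$ and a diagonal argument to obtain Hausdorff limits $S_\infty^m$ of $S_n\cap K_m$, invokes the $2$-Lipschitz continuity of $\dev_\succ$ with respect to Hausdorff distance (Proposition~\ref{prop:order_dev:deviationConti}) to bound $\dev_\succ S_\infty^m$, and finally sets $S:=\bigcup_m S_\infty^m$. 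You instead encode the sets into \emph{measures} by restricting, $\sigma_n:=\pi_n|_{S_n}$, pass to a common weakly convergent subsequence for $\sigma_n$, $\pi_n-\sigma_n$, and $\pi_n$, and take $S:=\supp\sigma$; the mass bound is immediate from testing against $\mathbf 1$ together with $\sigma\le\pi$, and the deviation bound follows by synchronizing approximating points from the \emph{same} $S_{n_k}$ via Portmanteau's open-set inequality applied simultaneously to two small balls. Your approach dispenses entirely with the Hausdorff-distance machinery, the exhaustion $K_m$, and Proposition~\ref{prop:order_dev:deviationConti}, and is arguably more economical; the one thing it requires and the paper's version does not is the elementary observation that $\supp\sigma_n\subset S_n$ (true since $S_n$ is closed) so that positive $\sigma_{n_k}$-mass in a ball produces a point of $S_{n_k}$ in that ball. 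Your closing remark correctly identifies the crux: working with Kuratowski/Hausdorff limits of the $S_n$ alone risks desynchronizing the two approximating points, and restricting $\pi_n$ to $S_n$ is precisely what keeps them on the same index.
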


\begin{proof}
Suppose that $\mu\succ'_{(s+\frac 1n,t+\frac 1n)}\nu$ for any positive integer $n$.
For any positive integer $n$, there exist $\pi_n\in\Pi(\mu,\nu)$ and a closed subset $S_n\subset \R^2$ such that $\dev_\succ S_n\le s+\frac 1n$ and $\pi_n(S_n)\ge 1-t-\frac 1n$.
Due to the weakly compactness of $\Pi(\mu,\nu)$, we may assume that $\pi_n$ converges weakly to some Borel probability measure $\pi$ by taking a subsequence.
By Prohorov's theorem, for any positive number $m$, there exists a compact subset $K_m\subset \R^2$ such that $\sup_{n\in\N}\pi_n(K_m^c)\le\frac 1m$ and $\pi(K_m^c)\le \frac 1m$. We may assume that the sequence of $\{K_m\}$ is monotone non-decreasing with respect to the inclusion relation.
Let $d_H$ be the Hausdorff distance of $(\R^2, d_{l^1})$ and $d_H^m$ the Hausdorff distance of $(K_m,d_{l^1})$.
Since $K_m$ is compact, $(\mathcal F (K_m),d_H^m)$ is also compact.
By taking a subsequence $\{n_i^1\}_{i\in\N}\subset \N$, we have $d_H^1(S_{n_i^1}\cap K_1,S_\infty^1)\to 0$ as $i\to\infty$, where $\N$ is the set of positive integers.
Furthermore, we take some subsequence $\{n_i^2\}_{i\in\N}\subset \{n_i^1\}_{i\in\N}$ and we have $d_H^2(S_{n_i^2}\cap K_2,S_\infty^2)\to 0$. By repeating this procedure, we take a subsequence $\{n_i^m\}_{i\in\N}\subset \{n_i^{m-1}\}_{i\in\N}$ and we have $d_H^m(S_{n_i^m}\cap K_m,S_\infty^m)\to 0$.
Since the convergence on $(\mathcal F(K_m),d_H^m)$ implies the convergence on $(\mathcal F(\R),d_H)$, we obtain
\begin{equation}
d_H(S_{n_i}\cap K_m,S_\infty^m)\to 0 \label{eqn:order_dev:nonDegenerate:hausdorff}
\end{equation}
for any positive integer $m$.
Since $\{K_m\}$ is monotone non-decreasing with respect to inclusion relation, $\{S_\infty^m\}$ is also monotone non-decreasing.
By Proposition \ref{prop:order_dev:deviationConti} and\eqref{eqn:order_dev:nonDegenerate:hausdorff}, we have
\begin{equation}\label{eqn:order_dev:nonDegenerate:SmDis}
\begin{aligned}
\dev_\succ S_\infty^m&\le \liminf_{i\to\infty}\dev_\succ (S_{n_i}\cap K_m)\\
&\le \liminf_{i\to\infty}\dev_\succ (S_{n_i})\\
&\le \liminf_{i\to\infty}(s+\frac 1{n_i})= s
\end{aligned}
\end{equation}
Since $\{\pi_{n_i}\}$ converges weakly to $\pi$ and \eqref{eqn:order_dev:nonDegenerate:hausdorff}, we also have
\begin{equation}\label{eqn:order_dev:nonDegenerate:piSm}
\begin{aligned}
\pi(S_\infty^m)&\ge \limsup_{i\to\infty} \pi_{n_i}(S_{n_i}\cap K_m)\\
&= \limsup_{i\to\infty} (\pi_{n_i}(S_{n_i})-\pi_{n_i}(S_{n_i}\cap K_m^c))\\
&\ge \limsup_{i\to\infty} (\pi_{n_i}(S_{n_i})-\pi_{n_i}(K_m^c))\\
&\ge \limsup_{i\to\infty} (1-t-\frac 1{n_i}-\frac 1m)= 1-t-\frac 1m
\end{aligned}
\end{equation}
for any positive number $m$.
Now, we put $S:=\bigcup_{m=1}^\infty S_\infty^m $.
By \eqref{eqn:order_dev:nonDegenerate:SmDis}, we have
\[
\dev_\succ S= \sup_{m\in \N} \dev_\succ S_\infty^m\le s
\]
By \eqref{eqn:order_dev:nonDegenerate:piSm}, we have
\[
\pi(S)=\lim_{m\to\infty}\pi( S_\infty^m)
\ge \lim_{m\to\infty}(1-t-\frac 1m)
= 1-t.
\]
Therefore we obtain $\mu\succ'_{(s,t)} \nu$.
This completes the proof.
\end{proof}
\begin{dfn}[Subtransport plan]
Let $\mu$ and $\nu$ be two Borel probability measures on $\R$. We say that a Borel measure on $\R^2$ is a subtransport plan between $\mu$ and $\nu$ if we have $(\pr_1)_*\pi\le\mu$ and $(\pr_2)_*\pi\le\nu$.
\end{dfn}
\begin{prop}
Let $\mu$ and $\nu$ be two Borel probabilty measures on $\R$.
Then we have $\mu\succ'_{(s,t)}\nu$ if and only if there exists a subtransport plan $\pi$ between $\mu$ and $\nu$ such that $\dev_\succ \supp\pi\le s$ and $1-\pi(\R^2)\le t$.
\end{prop}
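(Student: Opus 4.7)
The plan is to prove the equivalence by converting between transport plans and subtransport plans via restriction in one direction and completion by an arbitrary auxiliary transport in the other. Throughout, it will be important that $\dev_\succ$ is monotone under set inclusion (immediate from its definition as a supremum) and invariant under closure (Proposition \ref{prop:dev_closure}).

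For the ``only if'' direction, suppose $\mu\succ'_{(s,t)}\nu$ is witnessed by $\pi\in\Pi(\mu,\nu)$ and a Borel set $S\subset\R^2$ with $\dev_\succ S\le s$ and $\pi(S)\ge 1-t$. I would define the restriction $\tilde\pi(A):=\pi(A\cap S)$ for Borel $A\subset\R^2$. Its marginals satisfy $(\pr_i)_*\tilde\pi\le(\pr_i)_*\pi$, so $\tilde\pi$ is a subtransport plan between $\mu$ and $\nu$ with total mass $\tilde\pi(\R^2)=\pi(S)\ge 1-t$. To bound $\dev_\succ\supp\tilde\pi$, note that $\tilde\pi(\R^2\setminus\overline S)=\pi((\R^2\setminus\overline S)\cap S)=0$, hence $\supp\tilde\pi\subset\overline S$; then by monotonicity and Proposition \ref{prop:dev_closure}, $\dev_\succ\supp\tilde\pi\le\dev_\succ\overline S=\dev_\succ S\le s$.

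For the ``if'' direction, suppose $\pi$ is a subtransport plan with $\dev_\succ\supp\pi\le s$ and $c:=1-\pi(\R^2)\le t$. If $c=0$, then $\pi\in\Pi(\mu,\nu)$ already and one may take $S:=\supp\pi$. Otherwise, the residual measures $\mu':=\mu-(\pr_1)_*\pi$ and $\nu':=\nu-(\pr_2)_*\pi$ are nonnegative Borel measures with common total mass $c$. Choose any transport plan $\pi'$ between $\mu'/c$ and $\nu'/c$ (the product measure $(\mu'\otimes\nu')/c^2$ works), and set $\pi^*:=\pi+c\pi'$. By construction $\pi^*\in\Pi(\mu,\nu)$, and with the Borel (closed) set $S:=\supp\pi$ we have $\dev_\succ S\le s$ and $\pi^*(S)\ge\pi(S)=\pi(\R^2)=1-c\ge 1-t$, so $\mu\succ'_{(s,t)}\nu$.

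The only genuinely delicate point is the inclusion $\supp\tilde\pi\subset\overline S$ in the first direction, since $\supp\tilde\pi$ need not be contained in $S$ itself; passing to the closure and invoking Proposition \ref{prop:dev_closure} bridges this gap cleanly. Every other step reduces to linearity of marginals and monotonicity of $\dev_\succ$.
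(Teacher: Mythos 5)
Your proof is correct. The paper states this proposition without proof, so there is no argument in the source to compare against; your restriction-and-completion argument (restrict $\pi$ to $S$ for the forward direction; extend the subtransport plan by $(\mu'\otimes\nu')/c$ for the converse, taking $S:=\supp\pi$) is a natural and complete way to fill that gap, and the care you take with $\supp\tilde\pi\subset\overline S$ via Proposition \ref{prop:dev_closure} and with the degenerate case $c=0$ is exactly what is needed.
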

\begin{thm}\label{thm:order_dev:transitive}
Let $\mu_1$, $\mu_2$, and $\mu_3$ be three Borel probability measures on $\R$ and let $s_i,t_i\ge 0$ for $i=1,2$. 
If $\mu_1\succ'_{(s_1,t_1)}\mu_2$ and if $\mu_2\succ'_{(s_2,t_2)}\mu_3$, then we have $\mu_1\succ'_{(s_1+s_2,t_1+t_2)}\mu_3$.
\end{thm}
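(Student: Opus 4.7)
The plan is to mimic the standard gluing construction for transport plans. I would first choose $\pi_{12}\in\Pi(\mu_1,\mu_2)$ and $\pi_{23}\in\Pi(\mu_2,\mu_3)$ together with Borel sets $S_{12},S_{23}\subset\R^2$ witnessing the hypotheses, so that $\dev_\succ S_{12}\le s_1$, $\dev_\succ S_{23}\le s_2$, $\pi_{12}(S_{12})\ge 1-t_1$, and $\pi_{23}(S_{23})\ge 1-t_2$. Disintegrating $\pi_{12}$ and $\pi_{23}$ against their common $\mu_2$-marginal produces probability kernels $K_{12}(x_2,\cdot)$ and $K_{23}(x_2,\cdot)$ on $\R$, and I would then form the Borel probability measure
\[
\pi_{123}(dx_1\,dx_2\,dx_3):=K_{12}(x_2,dx_1)\,K_{23}(x_2,dx_3)\,\mu_2(dx_2)
\]
on $\R^3$. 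Setting $\pi_{13}:=(\pr_1,\pr_3)_*\pi_{123}$, one checks immediately that $(\pr_1)_*\pi_{13}=\mu_1$ and $(\pr_2)_*\pi_{13}=\mu_3$, so $\pi_{13}\in\Pi(\mu_1,\mu_3)$.

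The next step is to identify the candidate comparison set. I would define the Borel set
\[
T:=\{(x_1,x_2,x_3)\in\R^3\mid (x_1,x_2)\in S_{12},\ (x_2,x_3)\in S_{23}\},
\]
and observe that, since $(\pr_1,\pr_2)_*\pi_{123}=\pi_{12}$ and $(\pr_2,\pr_3)_*\pi_{123}=\pi_{23}$, a short inclusion-exclusion gives $\pi_{123}(T)\ge(1-t_1)+(1-t_2)-1=1-t_1-t_2$. Then I would let $E:=(\pr_1,\pr_3)(T)\subset\R^2$ and take $S:=\overline{E}$, which is closed and hence Borel; the measure bound
\[
\pi_{13}(S)=\pi_{123}\bigl((\pr_1,\pr_3)^{-1}(S)\bigr)\ge\pi_{123}(T)\ge 1-t_1-t_2
\]
follows from $(\pr_1,\pr_3)^{-1}(S)\supset T$.

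Finally, I would bound the iso-deviation of $E$ by a case analysis. Given two points $(x_1,x_3),(x_1',x_3')\in E$, choose witnesses $x_2,x_2'$ with $(x_1,x_2),(x_1',x_2')\in S_{12}$ and $(x_2,x_3),(x_2',x_3')\in S_{23}$. If $x_2\ge x_2'$, chain the inequalities $x_3-x_3'\le s_2+(x_2-x_2')$ and $x_2-x_2'\le s_1+\max\{x_1-x_1',0\}$ to obtain $x_3-x_3'-\max\{x_1-x_1',0\}\le s_1+s_2$; if $x_2<x_2'$, the first inequality already gives $x_3-x_3'\le s_2$, so the same bound holds. Thus $\dev_\succ E\le s_1+s_2$, and Proposition~\ref{prop:dev_closure} yields $\dev_\succ S=\dev_\succ E\le s_1+s_2$, completing the verification that $\mu_1\succ'_{(s_1+s_2,t_1+t_2)}\mu_3$. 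The only real subtlety is the measurability issue: the projection $E$ of a Borel set is in general only analytic, which is precisely why I pass to its closure and invoke Proposition~\ref{prop:dev_closure}; all other ingredients are routine.
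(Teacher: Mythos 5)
Your proof is correct, and it uses the same underlying idea as the paper (disintegrate, glue along the middle marginal, then chain the two deviation inequalities through the intermediate coordinate), but the bookkeeping is genuinely different. The paper first invokes its (unproved) proposition converting the definition into one with a \emph{subtransport} plan $\pi_i$ satisfying $\dev_\succ\supp\pi_i\le s_i$ and mass $\ge 1-t_i$; since the two subtransport plans then have possibly different $\mu_2$-marginals $\mu'$ and $\mu''$, the paper glues over the infimum measure $\mu'\wedge\mu''$, and the comparison set it exhibits is simply $\supp\pi_{13}$. You instead keep the full transport plans $\pi_{12}\in\Pi(\mu_1,\mu_2)$, $\pi_{23}\in\Pi(\mu_2,\mu_3)$ so that they share the exact marginal $\mu_2$, glue in the standard way, and carry along the explicit Borel sets $S_{12},S_{23}$ from the definition, forming $T$ and projecting. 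Your route is more elementary in that it works straight from the definition of $\succ'_{(s,t)}$ and avoids both the subtransport-plan equivalence and the $\mu'\wedge\mu''$ construction; the paper's route buys the convenience that once subtransport plans are available, the comparison set comes for free as a support, whereas you need the (correctly identified and handled) closure step and Proposition~\ref{prop:dev_closure} to deal with the projection $E$ of $T$ being only analytic. The mass estimate via inclusion--exclusion and the case split $x_2\ge x_2'$ versus $x_2<x_2'$ in the deviation bound are exactly as in the paper.
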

\begin{proof}
Suppose that $\mu_1\succ'_{(s_1,t_1)}\mu_2$ and $\mu_2\succ'_{(s_2,t_2)}\mu_3$.
There exists a subtransport plan $\pi_i$ between $\mu_i$ and $\mu_{i+1}$ such that $\dev_\succ\supp\pi_i\le s_i$ and $1-\pi_i(\supp\pi_i)\le t_i$ for $i=1,2$.
Put $\mu':=(\mathrm{pr}_2)_*\pi_1$ and $\mu'':=(\mathrm{pr}_1)_*\pi_2$.
By the disintegration theorem, there exist two families $\{(\pi_1)_x\}_{x\in\R}$ and $\{(\pi_2)_x\}_{x\in\R}$ of Borel measures on $\R$ such that
\begin{align*}
\pi_1(A\times B)&=\int_{B} (\pi_1)_x(A) d\mu'(x),\\
\pi_2(A\times B)&=\int_{A} (\pi_2)_x(B) d\mu''(x)
\end{align*}
for any Borel subsets $A$ and $B$ of $\R$. Now, we put
\begin{align*}
\pi_{123}(A\times B\times C)&:=\int_{B}(\pi_1)_x(A)\cdot (\pi_2)_x(C) d(\mu'\wedge\mu'')(x),\\
\pi_{13}:=(\mathrm{pr}_{13})_*\pi_{123}
\end{align*}
for any three Borel subsets $A$, $B$, and $C$ of $\R$, where $\mu'\wedge\mu'':=\mu'-(\mu'-\mu'')_+$ and we define the measure $(\mu'-\mu'')_+$ by
\[
(\mu'-\mu'')_+(B):=\sup\set{\mu'(B')-\mu''(B')\mid \text{$B'\subset B$ is a Borel set}}
\]
for any Borel set $B\subset \R$.
Then we have
\begin{equation}\label{eq:thm:order_dev:transitive:marginal}
(\mathrm{pr}_{12})_*\pi_{123}\le \pi_1,\quad
(\mathrm{pr}_{23})_*\pi_{123}\le \pi_2.
\end{equation}
In particular, $\pi_{13}$ is a subtransport plan between 
$\mu_1$ and $\mu_3$.
Moreover, we obtain $1-\pi_{13}(\supp\pi_{13})\le t_1+t_2$.
In fact, we have
\begin{align*}
\pi_{13}(\R^2)
&=\int_\R\left((\pi_1)_x(\R)\cdot (\pi_2)_x(\R)\right)d(\mu'\wedge \mu'')(x)\\
&=(\mu'\wedge \mu'')(\R)\\
&=\mu'(\R)-(\mu'-\mu'')_+(\R)\\
&\geq \mu'(\R)-(\mu_2-\mu'')_+(\R)\\
&=\mu'(\R)-(\mu_2(\R)-\mu''(\R))\\
&=\mu'(\R)+\mu''(\R)-1\\
&\geq (1-t_1)+(1-t_2)-1=1-t_1-t_2.
\end{align*}
The rest of the proof is to show $\dev_\succ \supp\pi_{13}\le s_1+s_2$. By Proposition \ref{prop:dev_closure} and $\supp\pi_{13}=\overline{\mathrm{pr}_{13}(\supp\pi_{123})}$, it suffices to prove 
\begin{equation}\label{eq:thm:order_dev:transitive:devEval}
\dev_\succ (\mathrm{pr}_{13}(\supp\pi_{123}))\le s_1+s_2.
\end{equation}
Take any $(x_i,z_i)\in\mathrm{pr}_{13}(\supp\pi_{123})$ for $i=1,2$.
There exists a point $y_i\in \R$ such that $(x_i,y_i,z_i)\in \supp\pi_{123}$.
By \eqref{eq:thm:order_dev:transitive:marginal}, we have
\[
\supp\pi_{123}\subset \mathrm{pr}_{12}^{-1}(\overline{\mathrm{pr}_{12}(\supp\pi_{123})})
\subset \mathrm{pr}_{12}^{-1}(\supp\pi_1)
\]
and 
\[
\supp\pi_{123}\subset \mathrm{pr}_{23}^{-1}(\overline{\mathrm{pr}_{23}(\supp\pi_{123})})
\subset \mathrm{pr}_{23}^{-1}(\supp\pi_2).
\]
This implies that $(x_i,y_i)\in\supp\pi_1$ and $(y_i,z_i)\in\supp\pi_2$.
Now, let us prove
\begin{equation}\label{eq:thm:order_dev:transitive:mainEq}
\max\set{y_1-y_2,0}-\max\set{x_1-x_2,0}\le s_1.
\end{equation}
In the case that $y_1<y_2$, we have
\[
\max\set{y_1-y_2,0}-\max\set{x_1-x_2,0}=-\max\set{x_1-x_2,0}\le 0.
\]
In the case that $y_1\ge y_2$, we have
\begin{align*}
\max\set{y_1-y_2,0}-\max\set{x_1-x_2,0}&=y_1-y_2-\max\set{x_1-x_2}\\
&\le \dev_\succ \supp \pi_1\le s_1.
\end{align*}
Since \eqref{eq:thm:order_dev:transitive:mainEq} and $\dev_\succ\supp \pi_2\le s_2$, we obtain
\begin{align*}
z_1-z_2-\max\set{x_1-x_2,0}&\le z_1- z_2-\max\set{y_1-y_2,0}+s_1\\
&\le s_1+s_2,
\end{align*}
which implies \eqref{eq:thm:order_dev:transitive:devEval}. This completes the proof.
\end{proof}

\section{the isoperimetric comparison condition with an error}

In this section, we prove Theorem \ref{thm:isoICL} to explain the relation between $\ep$-iso-dominant and $\ICL_\ep$. We also explain the relation between $\IC_\ep^+$ (Definition \ref{dfn:IC_plus}) and $\ICL_\ep$. $\IC_\ep^+$ is a discretization of $\IC$ in \cite{NkjShioya:isop}. At the end of this section, we give some examples of these conditions.

\begin{prop}\label{prop:isoScaling}
Let $\ep$ be a non-negative real number.
If a Borel probability measure $\nu$ on $\R$ is an $\varepsilon$-iso-dominant of an mm-space $X$, then $(t\cdot\mathrm{id}_\R)_*\nu$ is a $t\varepsilon$-iso-dominant of $tX$.
\end{prop}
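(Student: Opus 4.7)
The plan is to reduce the statement to a direct scaling computation on the witness transport plan and iso-deviation set. Two elementary observations do the work: first, the $1$-measurement of $tX$ is the image of $\cM(X;1)$ under $(t\cdot\id_\R)_*$; second, the iso-deviation $\dev_\succ$ is positively homogeneous of degree one under the diagonal scaling of $\R^2$.

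First I would identify the $1$-measurement of $tX$. A function $\psi:X\to\R$ is $1$-Lipschitz with respect to $td_X$ if and only if $\psi=t\varphi$ for some $1$-Lipschitz $\varphi:(X,d_X)\to\R$. Hence
\[
\cM(tX;1)=\set{(t\varphi)_*\mux\mid \varphi:X\to\R\text{ is }1\text{-Lipschitz}}=(t\cdot\id_\R)_*\cM(X;1).
\]
So to show $(t\cdot\id_\R)_*\nu$ is a $t\ep$-iso-dominant of $tX$ it suffices to prove $(t\cdot\id_\R)_*\nu\succ'_{(t\ep,0)}(t\cdot\id_\R)_*\mu$ for every $\mu\in\cM(X;1)$.

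Next, fix $\mu\in\cM(X;1)$. By hypothesis there exist $\pi\in\Pi(\nu,\mu)$ and a Borel subset $S\subset\R^2$ with $\dev_\succ S\le\ep$ and $\pi(S)\ge 1$ (i.e.\ $\pi(S)=1$). Let $T_t:\R^2\to\R^2$ be $T_t(x,y):=(tx,ty)$, and set $\pi':=(T_t)_*\pi$ and $S':=T_t(S)$. Then $\pi'\in\Pi((t\cdot\id_\R)_*\nu,(t\cdot\id_\R)_*\mu)$ and $\pi'(S')=\pi(T_t^{-1}(S'))\ge\pi(S)=1$.

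Finally, I would compute the iso-deviation under scaling. Directly from the definition,
\[
\dev_\succ S'=\sup\set{ty-ty'-\max\{tx-tx',0\}\mid (x,y),(x',y')\in S}=t\cdot\dev_\succ S\le t\ep,
\]
using $t\ge 0$ to pull $t$ out of the maximum. Thus $\pi'$ and $S'$ witness $(t\cdot\id_\R)_*\nu\succ'_{(t\ep,0)}(t\cdot\id_\R)_*\mu$, completing the proof. There is no real obstacle here; the only thing to be careful about is that the scaling factor $t$ can be passed through $\max\{\cdot,0\}$, which requires $t\ge 0$, and this is exactly the standing assumption on the scaling parameter of an mm-space.
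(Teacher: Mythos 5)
Your argument is correct, and it is the natural one (the paper itself states this proposition without proof). Both key steps check out: the identification $\cM(tX;1)=(t\cdot\id_\R)_*\cM(X;1)$ follows because $\psi$ is $1$-Lipschitz for $t d_X$ exactly when $\psi/t$ is $1$-Lipschitz for $d_X$, and the pushforward identity $(t\varphi)_*m_X=(t\cdot\id_\R)_*(\varphi_*m_X)$ is immediate. The homogeneity $\dev_\succ T_t(S)=t\,\dev_\succ S$ is also fine, since $\max\{t a,0\}=t\max\{a,0\}$ for $t\ge 0$ and one can pull a nonnegative scalar through a supremum; and $T_t(S)$ is Borel because $T_t$ is a homeomorphism of $\R^2$ for $t>0$. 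One tiny point of hygiene: the hypothesis you actually need is $t>0$ (so that $tX$ is a metric space and $T_t$ is invertible), which is the standing convention in the paper's definition of $tX$; $t\ge 0$ alone would not suffice because at $t=0$ the space $tX$ is not defined.
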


\begin{rem}
By Theorem \ref{thm:order_dev:zero_equiv}, a Borel measure on $\R$ is a $0$-iso-dominant if and only if it is an iso-dominant．
\end{rem}

\begin{dfn}[$\varepsilon$-Discrete isoperimetric profile]
Let $X$ be an mm-space, and $\ep\ge 0$ a real number.
We define the {\it $\varepsilon$-discrete isoperimetric profile $I_X^\varepsilon$ of $X$} by
\[
I_X^\varepsilon(v):=\inf\set{\mux(B_\varepsilon (A))\mid \mux(A)=v} \text{ for $v\in \Image\mux$},
\]
where $\Image\mux:=\set{m_X(A) \mid \text{$A\subset X$ is a Borel set.} }$.
\end{dfn}

\begin{dfn}[Isoperimetric comparison condition with an  error]
\label{dfn:IC_plus}\ \\
We say that an mm-space $X$ satisfies the condition $\IC_\varepsilon^+(\nu)$  for a Borel probability measure $\nu$ on $\R$ and a real number $\ep\ge 0$ if we have
\[
I_X^{\delta^+(t)+\varepsilon}\circ V(t)\ge V(t+\delta^+(t))
\]
for any $t\in (\supp\nu\setminus\{\sup\supp\nu\})\cap V^{-1}(\Image\mux)$, where
\[
\delta^+(t):=\inf\set{s>0\mid t+s\in\supp\nu}.
\]
\end{dfn}

Now, we prepare some definitions for the proof of Theorem \ref{thm:isoICL}.

\begin{dfn}[Generalized inverse function]
For a monotone nondecreasing and right-continuous function $F :
\R\to [ 0, 1 ]$ with 
\[
\lim_{t\to -\infty} F(t) = 0,
\]
we define a generalized inverse function $\tilde F : [ 0, 1 ] \to \R$ by
\[
\tilde F(s) :=
\begin{cases}
\inf\set{ t \in \R \mid s \le F(t) } & \text{ if $s \in ( 0, 1 ]$},\\
c &\text{ if $s = 0$}
\end{cases}
\]
for $s \in [ 0, 1 ]$, where $c$ is a real constant.
\end{dfn}
Let $A$ be a subset of $\R$.
We put
\[
\delta^-(A;a):=\inf\set{t>0\mid a-t\in A}
\]
for a point $a\in A$, where we define
\[
\delta^-(A;a):=\infty
\]
if $\set{t>0\mid a-t\in A}=\emptyset$.
We define $\Delta(A)$ by
\[
\Delta(A):=\sup\set{\delta^-(A;a)\mid a\in A\setminus \{\inf A\}}.
\]
If $A$ is a closed set, we have $a-\Delta (A;a)\in A$.

\begin{lem}\label{lem:genInverse}
For any $F$ as above, we have the following \eqref{lem:genInverse:eq1}, \eqref{lem:genInverse:eq2}, and
\eqref{lem:genInverse:eq3}.
\begin{enumerate}
\item $F \circ\tilde F(s) \ge s$ for any real number $s$ with $0 \le s \le1$. \label{lem:genInverse:eq1}
\item $\tilde F\circ F(t) \le t$ for any real number $t$ with $F(t) > 0$.\label{lem:genInverse:eq2}
\item $F^{-1}((-\infty, t]) \setminus \{0\} = ( 0, F(t) ]$ for any real number $t$.\label{lem:genInverse:eq3}
\end{enumerate}
\end{lem}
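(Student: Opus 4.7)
The plan is to verify each of (1), (2), (3) directly from the definition of $\tilde F$, relying only on the monotonicity and right-continuity of $F$; all three are standard facts about quantile-type inverses, and most of the work is bookkeeping.

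For (1), I would handle $s=0$ separately, where $F\circ\tilde F(0)=F(c)\ge 0=s$ holds trivially since $F$ takes values in $[0,1]$. For $s\in(0,1]$, the set $E_s:=\set{t\in\R\mid s\le F(t)}$ is nonempty, and $\tilde F(s)=\inf E_s$; I would pick a decreasing sequence $t_n\in E_s$ with $t_n\downarrow\tilde F(s)$ and use right-continuity of $F$ at $\tilde F(s)$ to conclude $F(\tilde F(s))=\lim_n F(t_n)\ge s$. For (2), the argument is essentially a one-liner: if $F(t)>0$ so that $F(t)\in(0,1]$, then $t\in\set{u\in\R\mid F(t)\le F(u)}=E_{F(t)}$, hence $\tilde F\circ F(t)=\inf E_{F(t)}\le t$.

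For (3), I would prove the two inclusions separately. The inclusion $(0,F(t)]\subset F^{-1}((-\infty,t])\setminus\{0\}$ follows because $s\le F(t)$ implies $t\in E_s$, and so $\tilde F(s)\le t$. Conversely, given $s\in(0,1]$ with $\tilde F(s)\le t$, I would combine (1) with the monotonicity of $F$ to obtain $s\le F(\tilde F(s))\le F(t)$, placing $s$ in $(0,F(t)]$.

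The only substantive step is the limit argument in (1), which is the unique place where right-continuity of $F$ is actually invoked; the two subtleties to watch for are the edge cases $s=0$ in (1) and $F(t)=0$ in (2), each of which would make the relevant set empty or the definition degenerate, and both of which are dispatched by the hypotheses. Everything else is a careful unpacking of definitions.
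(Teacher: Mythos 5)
Your proof is correct, and since the paper explicitly omits the proof of this lemma (deferring to \cite{Nkj:max}), there is no in-paper argument to compare against; what you give is the standard quantile-inverse argument one would expect. The one place where right-continuity is genuinely used is exactly the decreasing-sequence limit in (1), as you note, and (2) and (3) are indeed direct unpackings of the definition together with monotonicity and part (1). It is worth flagging one implicit hypothesis: for $s\in(0,1]$ the set $E_s=\set{t\mid s\le F(t)}$ is bounded below (hence $\tilde F(s)\in\R$) because $\lim_{t\to-\infty}F(t)=0$, and it is nonempty because $\tilde F$ is asserted to map into $\R$, which tacitly requires $\sup_t F(t)=1$; your argument uses this without comment, as does the paper.
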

The proof of the lemma is straight forward and omitted (see \cite{Nkj:max}).

\begin{proof}[Proof of Theorem \ref{thm:isoICL} \eqref{thm:isoICL:ICLtoDominant}]
Let $V$ be the cumulative distribution
function of $\nu$.
Take any 1-Lipschitz function $f:X\to \R$ and let $F:\R\to[0,1]$ be the cumulative distribution function of $f_*\mux$.
We put $\pi:=(\tilde V,\tilde F)_*\mathcal L^1|_{[0,1]}$ and see $\pi\in \Pi(\nu,f_*\mux)$.
It suffices to prove $\dev_{\succ}\supp\pi\le\varepsilon+\delta$, where $\delta:=\Delta(\supp\nu)$. Take any points $(x_i,y_i)\in\supp\pi$ for $i=1,2$. Let us prove
\begin{equation}\label{eq:ICLtoDominantGoal}
y_2-y_1-\max\set{x_2-x_1,0}\le \varepsilon+\delta.
\end{equation}
Since $\{0\}$ is a null set with respect to $\mathcal L^1$, we have
\begin{align*}
\supp\pi&=\supp(\tilde V,\tilde F)_*\Leb{[0,1]}\\
&\subset\overline{(\tilde V,\tilde F)(\supp \Leb{[0,1]}\setminus \{0\})}\\
&=\overline{(\tilde V,\tilde F)((0,1])}.
\end{align*}
Then, there exists $\{t_i^n\}_{n=1}^\infty\subset (0,1]$ such that $x_i=\lim_{n\to\infty}\tilde V(t_i^n)$ and $y_i=\lim_{n\to\infty}\tilde F(t_i^n)$ for $i=1,2$.

If we have $x_1>x_2$, we see $y_1\ge y_2$, which implies \eqref{eq:ICLtoDominantGoal}. In fact, we have
\[
y_2-y_1-\max\set{x_2-x_1,0}=y_2-y_1\le 0\le\varepsilon+\delta.
\]
We assume $x_1\le x_2$.
Let us prove
\begin{equation}\label{eq:ICLtoDominant}
V\circ\tilde V(t_2^n)\le F(\tilde F(t_1^n)+\tilde V(t_2^n)-\tilde V(t_1^n)+\delta+\varepsilon)
\end{equation}
for any positive integer $n$.
In the case that $\tilde V(t_1^n)=\inf\supp\nu$, we have
\[
0<t_1^n\le F\circ \tilde F(t_1^n)=\mux(f^{-1}((-\infty,\tilde F(t_1^n)])),
\]
which implies
\[
V\circ\tilde V(t_1^n)=\nu(\{\inf\supp\nu \})\le \mux(f^{-1}((-\infty,\tilde F(t_1^n)]))
\]
by the assumption of this theorem, By using $\ICL_\varepsilon(\nu)$, we obtain
\begin{align*}
V\circ\tilde V(t_2^n)&\le\mux(B_{\tilde V(t_2^n)-\tilde V(t_1^n)+\varepsilon}(f^{-1}((-\infty,\tilde F(t_1^n)])))\\
&\le\mux(f^{-1}(B_{\tilde V(t_2^n)-\tilde V(t_1^n)+\varepsilon}((-\infty,\tilde F(t_1^n)])))\\
&=F(\tilde F(t_1^n)+\tilde V(t_2^n)-\tilde V(t_1^n)+\varepsilon).
\end{align*}
In the case that $\tilde V(t_1^n)>\inf\supp\nu$, we have $\Delta(\supp\nu; \tilde V(t_1^n)) <\infty$. By the definition of $\Delta(\supp\nu; \tilde V(t_1^n))$, there exists a sequence $\{s_k^n \}_{k=1}^{\infty}$ of positive real numbers such that $\lim_{k\to\infty}s_k^n=\Delta(\supp\nu; \tilde V(t_1^n))$ and $\tilde V(t_1^n)-s_k^n\in\supp\nu$ for any positive integer $k$. By the definition of $\tilde V$, we have $V(\tilde V(t_1^n)-s)<t_1^n$ for any real number $s>0$, which implies
\begin{align*}
V(\tilde V(t_1^n)-s_k^n)<t_1^n
\le F\circ\tilde F(t_1^n)=\mux(f^{-1}((-\infty,\tilde F(t_1^n)])).
\end{align*}
By $\ICL_\varepsilon(\nu)$, we have
\begin{align*}
V\circ\tilde V(t_2^n)&\le \mux(B_{\tilde V(t_2^n)-\tilde V(t_1^n)+s_k^n+\varepsilon}(f^{-1}((-\infty,\tilde F(t_1^n)])))\\
&\le \mux(f^{-1}(B_{\tilde V(t_2^n)-\tilde V(t_1^n)+s_k^n+\varepsilon}((-\infty,\tilde F(t_1^n)])))\\
&=F(\tilde F(t_1^n)+\tilde V(t_2^n)-\tilde V(t_1^n)+s_k^n+\varepsilon).
\end{align*}
By taking limits with respect to $k$, we have
\begin{align*}
V\circ\tilde V(t_2^n)&\le F(\tilde F(t_1^n)+\tilde V(t_2^n)-\tilde V(t_1^n)+\Delta(\supp\nu; \tilde V(t_1^n))+\varepsilon)\\
&\le F(\tilde F(t_1^n)+\tilde V(t_2^n)-\tilde V(t_1^n)+\delta+\varepsilon).
\end{align*}
Thus we obtain \eqref{eq:ICLtoDominant}.

By using \eqref{eq:ICLtoDominant}, we have
\begin{align*}
t_2^n\le V\circ\tilde V(t_2^n)=F(\tilde F(t_1^n)+\tilde V(t_2^n)-\tilde V(t_1^n)+\delta+\varepsilon).
\end{align*}
Since $\tilde F$ is monotone non-decreasing, we have
\begin{align*}
\tilde F(t_2^n)&\le\tilde F\circ F(\tilde F(t_1^n)+\tilde V(t_2^n)-\tilde V(t_1^n)+\delta+\varepsilon)\\
&\le \tilde F(t_1^n)+\tilde V(t_2^n)-\tilde V(t_1^n)+\delta+\varepsilon.
\end{align*}
By taking limits with respect to $n$, we obtain $y_2-y_1\le x_2-x_1+\delta+\varepsilon$. This completes of proof.
\end{proof}

\begin{proof}[Proof of Theorem \ref{thm:isoICL} \eqref{thm:isoICL:DominantToICL}]
Take any two real numbers $a,b\in\supp\nu$ with $a\le b$ and any Borel set $A\subset X$ with $\mux(A)>0$ and $\mux(A)\ge V(a)$.
We define a 1-Lipschitz function $f:X\to\R$ by $f(x):=\dx(x,A)$ for $x\in X$. 
Since $\nu$ is an $\varepsilon$-iso-dominant of $X$, there exists a transport plan between between $\nu$ and $f_*\mux$ such that $\dev_{\succ}\supp\pi\le\varepsilon$.
We put
\begin{align*}
a'&:=\sup\set{x\mid (x,y)\in \supp\pi\cap (\R\times(-\infty,0])},\\
b'&:=\sup\set{x\mid (x,y)\in \supp\pi\cap (\R\times(-\infty,b-a+\varepsilon])}.
\end{align*}
We remark that we have $a'\le b'$ by the definition of $a'$ and $b'$.
Now, we have
\begin{align*}
V(a)&\le \mux(A)\le f_*\mux((-\infty,0])=\pi(\R\times(-\infty,0])\\
&=\pi((-\infty,a']\times (-\infty,0])=V(a').
\end{align*}
In particular, we have 
\begin{equation}\label{thm:isoICL:DominantToICL:compare_A}
a'\ge\inf\supp\nu
\end{equation}
because $V(a')\ge \mux(A)>0$.
Let us prove $a\le a'$. By \eqref{thm:isoICL:DominantToICL:compare_A}, we may assume $a>\inf\supp\nu$. If $a>a'$, then we have $V(a)>V(a')$ because we have $\nu(\{a \})>0$ or $\supp\nu$ is connected, which implies contradiction.

Next, let us prove $b\le b'$. We may assume $b\ge a'$ because $b\le a'\le b'$ if $b\le a'$.
By the definition of $a'$ and $\dev_\succ\pi\le\varepsilon$, there exists $y'_0\le 0$ such that $(a',y'_0)\in\supp\pi$. Similarly, there exists $y_0\in\R$ such that $(b,y_0)\in\supp\pi$ because of the definition of $b\in\supp\nu$ and $\dev_\succ\pi\le\varepsilon$.
Now, we have
\[
y_0\le y_0-y'_0 \le b-a' +\varepsilon \le b-a+\varepsilon
\]
because $\dev_\succ\pi\le\varepsilon$.
Therefore, we have $(b,y_0)\in \supp\pi\cap(\R\times (-\infty,b-a+\varepsilon])$, which implies $b\le b'$ by the definition of $b'$.

If we have
\begin{equation}\label{thm:isoICL:DominantToICL:subset}
\supp\pi\cap ((-\infty,b']\times \R)\subset (-\infty,b']\times(-\infty,b-a+2\varepsilon],
\end{equation}
then we obtain
\begin{align*}
V(b)&\le V(b')=\pi((-\infty,b']\times \R)\\
&\le \pi((-\infty,b']\times (-\infty,b-a+2\varepsilon])\\
&\le \pi(\R\times (-\infty,b-a+2\varepsilon])\\
&=f_*\mux((-\infty,b-a+2\varepsilon])\\
&=\mux(B_{b-a+2\varepsilon}(A)).
\end{align*}
Thus, the rest of the proof is to prove \eqref{thm:isoICL:DominantToICL:subset}.
Take any point $(x,y)\in\supp\pi\cap ((-\infty,b']\times \R)$. In the case that $x<b'$, there exists $(x',y')\in\supp\pi\cap(\R\times (-\infty,b-a+\varepsilon])$ such that $x'>x$ because of the definition of $b'$. Now, we have $y-y'=y-y'-\max\set{x-x',0}\le\dev_\succ\supp\pi\le\varepsilon$. Thus, we obtain $y\le y'+\varepsilon\le b-a+2\varepsilon$.

In the case that $x=b'$, for any positive integer $n$, there exists a point $(x_n,y_n)\in\supp\pi\cap(\R\times (-\infty,b-a+\varepsilon])$ such that $x-1/n<x_n\le x$.
By $\dev_{\succ}\supp\pi\le\varepsilon$, we obtain
\begin{align*}
y&\le y_n+x-x_n+\varepsilon\\
&\le x-x_n+b-a+2\varepsilon\\
&\le \frac 1n +b-a+2\varepsilon\to b-a+2\varepsilon \text{ as $n\to\infty$}.
\end{align*}
Thus we have $(x,y)\in (-\infty,b']\times(-\infty,b-a+2\varepsilon]$.
This completes the proof.
\end{proof}

\begin{prop}
Let $X$ be a finite mm-space equipped with uniform measure, and $\nu$ a Borel probability meausre on $\R$ with $N:=\#\supp\nu<\infty$. Let $\ep$ be a non-negative real number. We assume that 
\[
\Image\nu\subset (1/\# X) \mathbb Z:=\set{\frac 1{\# X}\cdot n\mid n\in\Z}.
\]
If $X$ satisfies $\IC_\varepsilon^+(\nu)$, then it satisfies $\ICL_{(N-2)\varepsilon}(\nu)$.
\end{prop}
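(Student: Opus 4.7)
The plan is to prove the conclusion by iterating the $\IC_\varepsilon^+(\nu)$ condition across consecutive atoms of $\supp\nu$. Write $\supp\nu = \{a_1 < a_2 < \cdots < a_N\}$, let $V$ denote the cumulative distribution function of $\nu$, and fix $a = a_i$, $b = a_j \in \supp\nu$ with $i \le j$ together with a Borel set $A \subset X$ satisfying $m_X(A) > 0$ and $V(a_i) \le m_X(A)$. The goal is $V(a_j) \le m_X(B_{a_j - a_i + (N-2)\varepsilon}(A))$; the case $i = j$ is immediate because $A \subset B_r(A)$ for every $r \ge 0$, so assume $i < j$.

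First I would record that the isoperimetric profile $v \mapsto I_X^r(v)$ is monotone non-decreasing on $\Image m_X = (1/\#X)\Z \cap [0,1]$. Because $X$ carries the uniform measure, for any $v \le v'$ in $\Image m_X$ and any set $A'$ of measure $v'$, one may choose a subset $A'' \subset A'$ of measure exactly $v$, and then $B_r(A'') \subset B_r(A')$. Combined with the assumption $\Image\nu \subset (1/\#X)\Z$, this monotonicity guarantees that all volumes arising in the iteration lie in $\Image m_X$, so that $\IC_\varepsilon^+(\nu)$ may legitimately be invoked at each stage.

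Set $A_0 := A$ and recursively $A_{k+1} := B_{a_{i+k+1} - a_{i+k} + \varepsilon}(A_k)$. I would prove by induction on $k$ that $m_X(A_k) \ge V(a_{i+k})$: the inductive step combines the inductive hypothesis, the monotonicity just recorded, and $\IC_\varepsilon^+$ at $t = a_{i+k}$ (which is a valid choice since $a_{i+k} \in \supp\nu \setminus \{\sup\supp\nu\}$ when $i+k < j \le N$) to give
\[
m_X(A_{k+1}) \ge I_X^{a_{i+k+1} - a_{i+k} + \varepsilon}(m_X(A_k)) \ge I_X^{a_{i+k+1} - a_{i+k} + \varepsilon}(V(a_{i+k})) \ge V(a_{i+k+1}).
\]
The triangle inequality yields $A_k \subset B_{(a_{i+k} - a_i) + k\varepsilon}(A)$, so setting $k = j - i$ produces $m_X(B_{(a_j - a_i) + (j-i)\varepsilon}(A)) \ge V(a_j)$. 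This already settles the conclusion in every case with $j - i \le N - 2$.

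The remaining, and genuinely delicate, obstacle is the extremal case $i = 1$, $j = N$, where the raw iteration produces $(N-1)\varepsilon$ rather than the claimed $(N-2)\varepsilon$. To save one $\varepsilon$ I would analyze the endpoint steps separately: exploit that the first iteration starts from a set whose measure $m_X(A) \ge V(a_1) \ge 1/\#X$ is already comparable to $V(a_2)$, and dually that $V(a_N) = 1$ forces the last iterate to equal $X$. Either of these boundary constraints should let one of the two extreme applications of $\IC_\varepsilon^+$ be absorbed into its neighbor through a sharper use of monotonicity on $\Image m_X$, yielding the required $(N-2)\varepsilon$. The precise bookkeeping here — identifying which of the two endpoint savings applies in each sub-case — is the step I expect to require the most care.
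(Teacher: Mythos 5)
Your iteration reproduces the paper's own proof almost step for step: the paper defines $\delta^+_1(t):=\delta^+(t)+t$, $\delta^+_{n+1}(t):=\delta^+\circ\delta^+_n(t)+\delta^+_n(t)$, passes to subsets $\tilde A_k$ of exact measure $V\circ\delta^+_k(a)$ (which plays the role of your monotonicity remark about $I_X^r$), and inducts to $m_X(B_{\delta^+_n(a)-a+n\varepsilon}(A))\ge V\circ\delta^+_n(a)$. Up to there you and the paper agree. But your last paragraph contains a genuine gap, and it is worth knowing that the paper does not repair it either: after the induction it simply asserts, with no argument, that the number of steps $n_0$ from $a$ to $b$ satisfies $n_0\le N-2$. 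This is false in exactly the case you flag, $a=a_1$, $b=a_N$, where $n_0=N-1$. The endpoint savings you hope for do not exist. Knowing $m_X(A)\ge V(a_1)$ gives no lower bound of the form $m_X(A)\ge V(a_2)$, and $V(a_N)=1$ does not help because $a_N\in\supp\nu$ forces $V(a_{N-1})<1$, so the final application of $\IC^+_\varepsilon$ is genuinely needed and genuinely costs an $\varepsilon$.

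In fact the stated $(N-2)\varepsilon$ bound is simply wrong. Take $X=\{0,1,2\}$ with the path metric and the uniform measure, so $\#X=3$, and let $\nu:=\frac13(\delta_0+\delta_{0.9}+\delta_{1.8})$, so $N=3$ and $\Image\nu\subset\frac13\Z$. Then $X$ satisfies $\IC^+_{0.1}(\nu)$, and $0.1$ is optimal there, since $B_{0.9+0.1}(\{0\})=\{0,1\}$ has measure $2/3=V(0.9)$. However $\ICL_{(N-2)\cdot 0.1}(\nu)=\ICL_{0.1}(\nu)$ fails: with $a=0$, $b=1.8$, $A=\{0\}$ one has $m_X(B_{1.9}(\{0\}))=m_X(\{0,1\})=2/3<1=V(1.8)$. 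Only $\ICL_{(N-1)\cdot 0.1}(\nu)=\ICL_{0.2}(\nu)$ holds. So the conclusion your iteration actually proves, and the one that should replace $(N-2)\varepsilon$ both in the statement and in the paper's proof, is $\ICL_{(N-1)\varepsilon}(\nu)$; do not burn effort trying to recover the extra $\varepsilon$, since it cannot be recovered.
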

\begin{proof}
Suppose that $X$ satisfies $\IC_\varepsilon^+(\nu)$. Take any two real number $a,b\in\supp\nu$ with $a\le b$ and a Borel subset $A\subset X$ with $\mux(A)\ge V(a)$. We may assume $a<\sup\supp\nu$.
We inductively define $\delta^+_n$ by
\[
\delta^+_1(t):=\delta^+(t)+t, \delta^+_{n+1}(t):=\delta^+\circ\delta^+_n(t)+\delta^+_n(t)
\]
for any positive integer $n$.
Now, there exists a positive integer $n_0$ such that $\delta^+_{n_0}(a)=b$ and $n_0\le N-2$.
Let us prove by induction
\begin{equation}\label{prop:icPlusToICL:eq:first}
\mux(B_{\delta^+_n(a)-a+n\varepsilon}(A))\ge V\circ\delta^+_n(a)
\end{equation}
for any positive integer $n\le n_0$.

First, we consider the case $n=1$.
Since $m_X$ is the uniform measure and $\Image\nu\subset (1/\# X) \mathbb Z$, threre exists a Borel set 
$\tilde A_1\subset A$ such that $\mux(\tilde A_1)=V(a)$ because we have $\mux(A)\ge V(a)$.
By the definition of $I_X^{\delta^+(a)+\varepsilon}$, we have
\begin{align*}
\mux(B_{\delta^+_1(a)-a+\varepsilon}(A))&=\mux(B_{\delta^+(a)+\varepsilon}(A))\\
&\ge \mux(B_{\delta^+(a)+\varepsilon}(\tilde A_1))\\
&\ge I_X^{\delta^+(a)+\varepsilon}\circ V(a)\\
&\ge V\circ \delta^+_1(a),
\end{align*}
where we remark that $X$ satisfies $\IC^+_\varepsilon(\nu)$. 

Next, we assume \eqref{prop:icPlusToICL:eq:first} for $n=k$. Thus, we have
\[
\mux(B_{\delta^+_k(a)-a+k\varepsilon}(A))\ge V\circ\delta^+_k(a),
\]
which implies that there exists a Borel subset
\[
\tilde A_k\subset B_{\delta^+_k(a)-a+k\varepsilon}(A)
\]
such that $\mux(\tilde A_k)=V\circ\delta^+_k(a)$. Therefore we have
\begin{align*}
\mux(B_{\delta^+_{k+1}(a)-a+(k+1)\varepsilon}(A))&\ge \mux(B_{\delta^+_{k+1}(a)-\delta^+_k(a)+\varepsilon}(B_{\delta^+_k(a)-a+k\varepsilon}(A)))\\
&\ge\mux(B_{\delta^+\circ \delta^+_k(a)+\varepsilon}(\tilde A_k))\\
&\ge I_X^{\delta^+\circ \delta^+_k(a)+\varepsilon}\circ V\circ \delta^+_k(a)\\
&\ge V\circ \delta^+_k(a)
\end{align*}
if $k+1\le n_0$. Thus we obtain \eqref{prop:icPlusToICL:eq:first}.
In particular, we have
\[
\mux(B_{\delta^+_{n_0}(a)-a+n_0\varepsilon}(A))\ge V\circ\delta^+_{n_0}(a).
\]
Therefore we obtain
\begin{align*}
\mux(B_{b-a+(N-2)\varepsilon}(A))&\ge \mux(B_{\delta^+_{n_0}(a)-a+n_0\varepsilon}(A))\\
&\ge V\circ\delta^+_{n_0}(a)=V(b).
\end{align*}
This completes the proof.
\end{proof}
\begin{prop}
Let $X$ be an mm-space and $\nu$ a Borel probability measure on $\R$, and $\ep\ge 0$ a real number. If $X$ satisfies $\ICL_\ep(\nu)$, then it satisfies $\IC_\ep^+(\nu)$.
\end{prop}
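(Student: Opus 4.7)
The plan is to apply $\ICL_\ep(\nu)$ directly with the two canonical choices $a := t$ and $b := t+\delta^+(t)$. Fix any $t \in (\supp\nu\setminus\{\sup\supp\nu\})\cap V^{-1}(\Image m_X)$. I would first verify $b\in\supp\nu$: since $t\ne\sup\supp\nu$, we have $\delta^+(t)<\infty$, and by definition of $\delta^+(t)$ there is a sequence $s_n\downarrow\delta^+(t)$ with $t+s_n\in\supp\nu$, so closedness of $\supp\nu$ gives $b\in\supp\nu$. Clearly $a\le b$.

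Next, let $A\subset X$ be an arbitrary Borel set with $m_X(A)=V(t)$; such an $A$ exists because $V(t)\in\Image m_X$. We have $F_\nu(a)=V(t)=m_X(A)$, so the two hypotheses of $\ICL_\ep(\nu)$ on the pair $(a,b,A)$ are satisfied, provided also $m_X(A)>0$. In that case $\ICL_\ep(\nu)$ yields
\[
F_\nu(b)\le m_X(B_{b-a+\ep}(A)) = m_X(B_{\delta^+(t)+\ep}(A)).
\]
Taking the infimum over all admissible $A$ gives
\[
V(t+\delta^+(t)) = F_\nu(b) \le I_X^{\delta^+(t)+\ep}(V(t)),
\]
which is precisely the inequality defining $\IC_\ep^+(\nu)$ at $t$.

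The only case requiring a brief separate argument is $V(t)=0$. Then $A=\emptyset$ is admissible with $m_X(A)=0=V(t)$ and $m_X(B_{\delta^+(t)+\ep}(\emptyset))=0$, so $I_X^{\delta^+(t)+\ep}(0)=0$, and it remains to show $V(t+\delta^+(t))=0$. But if $t\in\supp\nu$ and $V(t)=\nu((-\infty,t])=0$, then every open neighborhood of $t$ has positive $\nu$-mass, hence $\nu((t,t+\eta))>0$ for every $\eta>0$; this forces $\delta^+(t)=0$ and therefore $V(t+\delta^+(t))=V(t)=0$.

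The proof is essentially a direct application of $\ICL_\ep(\nu)$, so I do not anticipate any real obstacle. The only mildly delicate point is the degenerate case $V(t)=0$, handled by the short observation that $t\in\supp\nu$ then forces $\delta^+(t)=0$.
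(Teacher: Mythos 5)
Your proposal is correct and follows exactly the approach the paper has in mind: the paper dismisses the proof as immediate from the two definitions (take $a=t$, $b=t+\delta^+(t)$, and $A$ ranging over sets of measure $V(t)$), and you have simply spelled out those details, including the correct treatment of the degenerate case $V(t)=0$ via $\delta^+(t)=0$.
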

\begin{proof}
This follows from the definition of $\ICL_\ep(\nu)$ and $\IC_\ep^+(\nu)$.
\end{proof}

\begin{ex}\label{ex:lOneDiscreteIso}
Let $G_1,G_2,\dots,G_n$ 
be connected graphs with same order $k\ge 2$. Let $\Pi_{i=1}^n G_i$ be the cartesian product graph equipped with the path metric and the uniform measure. Let $d_0:[k]^n\to \R$ be the distance function from the origin. Then $\Pi_{i=1}^n G_i$ satisfies $\ICL((d_0)_*m_{[k]^n})$ by Theorem $13$ in \cite{Bol:comp}. Thus the measure $(d_0)_*m_{[k]^n}$ is a $1$-iso-dominant of  $\Pi_{i=1}^n G_i$ because of Theorem $\ref{thm:isoICL}$ \eqref{thm:isoICL:ICLtoDominant}.
In particular, the measure $(d_0)_*m_{[k]^n}$ is a $1$-iso-dominant of the discrete $l^1$-cube $[k]^n$.
\end{ex}

\begin{ex}\label{ex:lOneDiscreteTorusIso}
We assume that $k$ is a positive even integer. Let $X:=(\Z/(k\Z))^n$ be the discrete torus equipped with the $l^1$-distance and the uniform measure $m_X$, and $d_0:X\to \R$ the distance function from the origin.
Then it satisfies $\ICL((d_0)_*m_X)$ by Corollary 6 in \cite{Bol:isop_torus}. Thus the measure $(d_0)_*m_X$ is a $1$-iso-dominant of $X$.
\end{ex}

\section{Stability of $\varepsilon$-iso-dominant}
\begin{dfn}[$(s,t)$-iso-dominant]
Let $s$ and $t$ be two non-negative real numbers.
We call a Borel probability measure $\nu$ on $\R$ an {\it $(s,t)$-iso-dominant} of an mm-space $X$ if we have $\nu\succ'_{(s,t)}\mu$ for all $\mu\in\cM(X;1)$.
\end{dfn}
\begin{dfn}[distortion from the diagonal]
Let $(X,d_X)$ be a metric space. We define {\it the distortion from the diagonal} of a subset $S\subset X$ by
\[
\dis_\Delta S:=\sup\set{d_X(x,y) \mid (x,y)\in S}.
\]
Let $\mu$ and $\nu$ be two Borel probability meausres on $X$.
We define {\it the distortion from the diagonal} of a transport plan $\pi\in\Pi(\mu,\nu)$ between $\mu$ and $\nu$ by
\[
\dis_\Delta \pi:=\inf_S \max\set{\dis_\Delta S,1-\pi(S)}
\]
whrere $S\subset \R^2$ is a closed subset.
\end{dfn}

\begin{thm}[Strassen's theorem; cf. {\cite[Corollary 1.28]{Vil:topics}}]\label{thm:StrassenDis}
Let $\mu$ and $\nu$ be two Borel probability measures on a metric space $X$. Then we have
\[
\dP(\mu,\nu)=\inf_{\pi\in\Pi(\mu,\nu)}\dis_\Delta \pi.
\]
\end{thm}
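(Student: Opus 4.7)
The plan is to reduce the identity to the classical Strassen theorem by reformulating $\dis_\Delta \pi$ in a Ky Fan-type form. First I would establish that for every $\pi\in\Pi(\mu,\nu)$,
\[
\dis_\Delta \pi = \inf\{\alpha\ge 0 \mid \pi(\{(x,y)\in X\times X : \dx(x,y) > \alpha\}) \le \alpha\}.
\]
The inequality ``$\le$'' follows by choosing the closed set $S_\alpha := \{(x,y) : \dx(x,y)\le \alpha\}$, which satisfies $\dis_\Delta S_\alpha\le \alpha$ and $1-\pi(S_\alpha)\le \alpha$ whenever $\alpha$ lies in the right-hand set. Conversely, for any closed $S$ with $\max\{\dis_\Delta S,1-\pi(S)\}<\alpha$, one has $S\subset \{\dx<\alpha\}$, so $\pi(\{\dx>\alpha\})\le 1-\pi(S)<\alpha$.

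The inequality $\dP(\mu,\nu)\le \inf_\pi \dis_\Delta \pi$ is then elementary. Fix $\pi\in\Pi(\mu,\nu)$ and a closed $S$ with $\dis_\Delta S\le s$ and $1-\pi(S)\le t$, and set $\varepsilon:=\max\{s,t\}$. For every Borel $A\subset X$, the inclusion $(A\times X)\cap S\subset A\times B_\varepsilon(A)$ yields
\begin{align*}
\mu(A) &= \pi(A\times X) \le \pi((A\times X)\cap S) + (1-\pi(S)) \\
&\le \nu(B_\varepsilon(A)) + \varepsilon,
\end{align*}
and the symmetric bound $\nu(A)\le \mu(B_\varepsilon(A))+\varepsilon$ follows by the same argument applied to $X\times A$. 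Hence $\dP(\mu,\nu)\le \varepsilon$, and taking the infimum over $S$ and $\pi$ gives the bound.

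For the reverse inequality $\inf_\pi \dis_\Delta \pi\le \dP(\mu,\nu)$, I would invoke the classical Strassen theorem (Villani's Corollary~1.28), since the Ky Fan reformulation above recasts the statement precisely in the form given there. This is the substantive direction, and the main obstacle in any direct proof is the construction of an admissible coupling $\pi$ out of the Prohorov-type condition $\mu(A)\le \nu(B_\varepsilon(A))+\varepsilon$. The standard route is a measure-theoretic Hall marriage argument: first discretize $\mu$ and $\nu$ by finitely supported approximations using tightness and build the coupling combinatorially in the finite case, then pass to the general Polish setting by weak compactness of $\Pi(\mu,\nu)$ together with the upper semicontinuity of $\pi\mapsto \pi(C)$ on closed sets $C$.
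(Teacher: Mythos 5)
The paper offers no proof of this theorem: it is stated as Strassen's theorem and the author cites it directly to Villani, Corollary~1.28, treating it as a known result. Your argument is a correct derivation of the paper's formulation from that reference, and it fills in exactly the bookkeeping the citation leaves implicit. The Ky Fan reformulation of $\dis_\Delta\pi$ is verified correctly: taking $S_\alpha=\{(x,y): d_X(x,y)\le\alpha\}$ gives $\dis_\Delta\pi\le\alpha$ whenever $\pi(\{d_X>\alpha\})\le\alpha$, and conversely any closed $S$ with $\max\{\dis_\Delta S,1-\pi(S)\}<\alpha$ satisfies $S\subset\{d_X<\alpha\}$, so $\pi(\{d_X>\alpha\})<\alpha$. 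The elementary direction $\dP(\mu,\nu)\le\inf_\pi\dis_\Delta\pi$ is also argued correctly via $\mu(A)\le\pi((A\times X)\cap S)+(1-\pi(S))\le\nu(B_\ep(A))+\ep$ for every Borel $A$. Two minor remarks: (i) the Prohorov distance is conventionally defined via open $\ep$-neighborhoods $U_\ep$ rather than closed balls $B_\ep$, but since you pass to the infimum over $\ep$ afterwards the distinction is immaterial; (ii) your closing sketch of a Hall-marriage plus weak-compactness proof of the hard direction is a reasonable outline but is unnecessary, since that direction is precisely what the cited Corollary~1.28 supplies once the Ky Fan reformulation is in hand.
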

\begin{lem}\label{lem:devIneq}
For a subset $S\subset \R^2$, we have
\[
\dev_\succ S\le 2\dis_\Delta S.
\]
\end{lem}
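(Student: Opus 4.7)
The plan is to work directly from the two definitions: for arbitrary points $(x,y),(x',y')\in S$, bound the quantity $y-y'-\max\{x-x',0\}$ by $2\dis_\Delta S$ and then take the supremum. Recall that $\dis_\Delta S$ controls $|u-v|$ for every $(u,v)\in S\subset\R^2$, so I have two independent bounds $|y-x|\le \dis_\Delta S$ and $|x'-y'|\le\dis_\Delta S$ available.

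The key algebraic move is the elementary inequality $\max\{x-x',0\}\ge x-x'$. Rewriting with this gives
\[
y-y'-\max\{x-x',0\}\le y-y'-(x-x')=(y-x)+(x'-y'),
\]
after which each summand is at most $\dis_\Delta S$ in absolute value, yielding the target estimate $y-y'-\max\{x-x',0\}\le 2\dis_\Delta S$. Taking the supremum over all pairs in $S$ then gives $\dev_\succ S\le 2\dis_\Delta S$.

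I expect no real obstacle. The only mild temptation is to split into cases according to whether $x\ge x'$ or $x<x'$ (since the definition of $\max$ behaves differently in each); the observation $\max\{x-x',0\}\ge x-x'$ removes the need for a case split and gives a single uniform estimate. The constant $2$ is the correct one because the bound picks up one contribution of $\dis_\Delta S$ from each of the two points.
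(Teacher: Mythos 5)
Your proof is correct and uses essentially the same estimate as the paper's, namely bounding $y-y'-\max\{x-x',0\}$ by $(y-x)+(x'-y')$ and then invoking $|u-v|\le\dis_\Delta S$ for each point of $S$. The only difference is cosmetic: the paper splits into cases $x\ge x'$ and $x<x'$, whereas your single observation $\max\{x-x',0\}\ge x-x'$ collapses both cases into one line.
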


\begin{proof}
Take any two points $(x,y),(x',y')\in S$.
If $x-x'\ge 0$, then we have
\begin{align*}
y-y'-\max\set{x-x',0}&=y-y'-|x-x'|\\
&\le |y-y'|-|x-x'|\\
&\le |x-y|+|x'-y'|\le 2\dis_\Delta S.
\end{align*}
If $x-x'<0$, then we have
\begin{align*}
y-y'-\max\set{x-x',0}&=y-y'\\
&<y-y'+x'-x\\
&\le |y-x|+|x'-y'|\le 2\dis_\Delta S.
\end{align*}
Thus we obtain $\dev_\succ S\le 2\dis_\Delta S$. This completes the proof.
\end{proof}

\begin{lem}\label{lem:order_dev:Prohorov}
Let $\mu$ and $\nu$ be two Borel probability measures on $\R$. If $d_P(\mu,\nu)<\varepsilon$, then we have $\mu\succ'_{(2\varepsilon,\varepsilon)}\nu$.
\end{lem}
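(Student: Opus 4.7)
The plan is to assemble three ingredients already in place: Strassen's theorem (Theorem \ref{thm:StrassenDis}), the definition of the distortion $\dis_\Delta \pi$, and the comparison $\dev_\succ \le 2\dis_\Delta$ from Lemma \ref{lem:devIneq}. Starting from $\dP(\mu,\nu)<\varepsilon$, Strassen's theorem produces a transport plan $\pi \in \Pi(\mu,\nu)$ with $\dis_\Delta \pi < \varepsilon$; unpacking the definition of $\dis_\Delta \pi$ as an infimum over closed subsets of $\R^2$, I then extract a closed $S \subset \R^2$ witnessing both $\dis_\Delta S < \varepsilon$ and $1-\pi(S) < \varepsilon$.

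With such $S$ in hand, Lemma \ref{lem:devIneq} immediately gives $\dev_\succ S \le 2\dis_\Delta S < 2\varepsilon$. The pair $(\pi, S)$ then satisfies $\dev_\succ S \le 2\varepsilon$ and $1-\pi(S) \le \varepsilon$, which is precisely the data required by the definition of $\succ'_{(2\varepsilon,\varepsilon)}$, yielding $\mu \succ'_{(2\varepsilon,\varepsilon)} \nu$. The argument is a short chain of implications with no real obstacle; the only subtlety to watch is that both infima (the one in Strassen's theorem and the one defining $\dis_\Delta \pi$) need only be undercut strictly rather than attained, and this is ensured by the strict inequality $\dP(\mu,\nu) < \varepsilon$ in the hypothesis.
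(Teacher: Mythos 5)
Your proposal is correct and takes exactly the same route as the paper, which also cites Strassen's theorem (Theorem \ref{thm:StrassenDis}) together with Lemma \ref{lem:devIneq}; you have merely expanded the paper's one-line proof by making explicit the extraction of the witnessing closed set $S$ from the infimum defining $\dis_\Delta\pi$ and noting the role of the strict inequality.
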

\begin{proof}
This follows from Theorem \ref{thm:StrassenDis} and Lemma \ref{lem:devIneq}.
\end{proof}

\begin{lem}\label{lem:isodominantProk}
Let $\mu$ and $\nu$ be two Borel probability meaures on $\R$, and $X$ an mm-space. If  $\mu$ is an $(s,t)$-iso-dominant of $X$ and we have $d_P(\mu,\nu)<\varepsilon$, then $\nu$ is an $(s+2\varepsilon,t+\varepsilon)$-iso-dominant of $X$.
\end{lem}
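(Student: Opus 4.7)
The plan is to chain together two previously established facts: Lemma \ref{lem:order_dev:Prohorov}, which converts a Prohorov-distance bound into an iso-Lipschitz order statement with error, and the transitivity of the iso-Lipschitz order with an error (Theorem \ref{thm:order_dev:transitive}). So the proof is essentially a two-step composition, and no new estimates are needed.

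More precisely, I would start by unpacking the goal. Fix an arbitrary $\lambda\in\cM(X;1)$; it suffices to prove $\nu\succ'_{(s+2\varepsilon,\,t+\varepsilon)}\lambda$. Since $\mu$ is an $(s,t)$-iso-dominant of $X$ by hypothesis, we already have $\mu\succ'_{(s,t)}\lambda$. The remaining job is therefore to relate $\nu$ to $\mu$ with an appropriate error.

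For that, the hypothesis $\dP(\mu,\nu)<\varepsilon$ together with the symmetry of the Prohorov distance gives $\dP(\nu,\mu)<\varepsilon$, so Lemma \ref{lem:order_dev:Prohorov} yields $\nu\succ'_{(2\varepsilon,\varepsilon)}\mu$. Now Theorem \ref{thm:order_dev:transitive}, applied to $\nu\succ'_{(2\varepsilon,\varepsilon)}\mu$ and $\mu\succ'_{(s,t)}\lambda$, gives exactly $\nu\succ'_{(2\varepsilon+s,\,\varepsilon+t)}\lambda$, which is what we wanted. Since $\lambda$ was arbitrary, this shows $\nu$ is an $(s+2\varepsilon,t+\varepsilon)$-iso-dominant of $X$.

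There is essentially no obstacle here: the substantive content (extracting an appropriate transport plan and iso-deviation-controlled set from a Prohorov bound, and composing transport plans via the disintegration trick used in the transitivity proof) has already been carried out in Lemmas \ref{lem:devIneq} and \ref{lem:order_dev:Prohorov} and Theorem \ref{thm:order_dev:transitive}. The only minor point to be careful about is that Lemma \ref{lem:order_dev:Prohorov} is formulated as ``$\mu\succ'_{(2\varepsilon,\varepsilon)}\nu$'' from ``$\dP(\mu,\nu)<\varepsilon$,'' which one must apply to the pair in the correct order so that the composition with the given $\mu\succ'_{(s,t)}\lambda$ goes through; the symmetry of $\dP$ makes this automatic.
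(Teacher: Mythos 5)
Your argument is correct and matches the paper's proof, which cites exactly Lemma \ref{lem:order_dev:Prohorov} and Theorem \ref{thm:order_dev:transitive}; you have simply spelled out the composition $\nu\succ'_{(2\varepsilon,\varepsilon)}\mu\succ'_{(s,t)}\lambda$ and the role of the symmetry of $\dP$, which the paper leaves implicit. Nothing further is needed.
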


\begin{proof}
This follows from Lemma \ref{lem:order_dev:Prohorov} and Theorem \ref{thm:order_dev:transitive}.
\end{proof}

\begin{lem}\label{lem:isodominantConc}
Let $X$ and $Y$ be two mm-spaces, and $\nu$ a Borel probability measure on $\R$.
If $\nu$ is an $(s,t)$-iso-dominant of $X$ and we have $d_{conc}(X,Y)<\varepsilon$, then $\nu$ is an $(s+2\varepsilon,t+\varepsilon)$-iso-dominant of $Y$
\end{lem}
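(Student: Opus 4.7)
The plan is to mimic Lemma \ref{lem:isodominantProk}, using the observable distance to replace any 1-Lipschitz function on $Y$ by a 1-Lipschitz function on $X$ whose push-forward measure is Prohorov-close to the target. Fix any $\mu \in \cM(Y;1)$, write $\mu = g_* m_Y$ for a 1-Lipschitz $g : Y \to \R$, and aim to prove $\nu \succ'_{(s+2\varepsilon, t+\varepsilon)} \mu$. Since $\dconc(X,Y) < \varepsilon$, the definition of the observable distance supplies parameters $\varphi : I \to X$ and $\psi : I \to Y$ and a 1-Lipschitz function $f : X \to \R$ satisfying $\kf(f \circ \varphi,\, g \circ \psi) < \varepsilon$.

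Next I would upgrade this Ky Fan bound to a Prohorov bound between the push-forwards. Since $\varphi_* \cL^1 = m_X$ and $\psi_* \cL^1 = m_Y$, the coupling $\pi_0 := (f \circ \varphi,\, g \circ \psi)_* \cL^1$ is a transport plan in $\Pi(f_* m_X,\, g_* m_Y)$. Choosing $\delta < \varepsilon$ with $\cL^1(\{|f \circ \varphi - g \circ \psi| > \delta\}) \le \delta$ and letting $S := \{(x,y) \in \R^2 : |x-y| \le \delta\}$, we obtain $\dis_\Delta S \le \delta$ and $\pi_0(S) \ge 1 - \delta$, so Strassen's theorem (Theorem \ref{thm:StrassenDis}) yields $\dP(f_* m_X,\, g_* m_Y) \le \delta < \varepsilon$.

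Finally I would chain three iso-Lipschitz comparisons. The hypothesis that $\nu$ is an $(s,t)$-iso-dominant of $X$, together with $f_* m_X \in \cM(X;1)$, gives $\nu \succ'_{(s,t)} f_* m_X$. Lemma \ref{lem:order_dev:Prohorov} applied to $f_* m_X$ and $\mu$ gives $f_* m_X \succ'_{(2\varepsilon, \varepsilon)} \mu$. Transitivity (Theorem \ref{thm:order_dev:transitive}) then yields $\nu \succ'_{(s+2\varepsilon, t+\varepsilon)} \mu$; since $\mu \in \cM(Y;1)$ was arbitrary, $\nu$ is an $(s+2\varepsilon, t+\varepsilon)$-iso-dominant of $Y$. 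The main obstacle is the Ky-Fan-to-Prohorov transfer between push-forwards, but this is routine via Strassen's theorem; everything else is bookkeeping that closely parallels Lemma \ref{lem:isodominantProk}.
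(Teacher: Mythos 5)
Your proof is correct and follows essentially the same route as the paper: extract a 1-Lipschitz $f$ on $X$ via the observable distance, pass from the Ky Fan bound to a Prohorov bound on push-forwards, then chain $\nu \succ'_{(s,t)} f_*m_X \succ'_{(2\varepsilon,\varepsilon)} g_*m_Y$ by Lemma~\ref{lem:order_dev:Prohorov} and Theorem~\ref{thm:order_dev:transitive}. The only difference is that you spell out the Ky-Fan-to-Prohorov step via an explicit coupling and Strassen's theorem, whereas the paper invokes the inequality $\dP(f_*(\varphi_*\leb),g_*(\psi_*\leb))\le\kf(\varphi^*f,\psi^*g)$ directly.
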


\begin{proof}
Take any $g\in\Lip_1(Y)$. By $\dconc(X,Y)<\varepsilon$, there exists two parameters $\varphi:I\to X$ and $\psi:I\to Y$ such that 
\[
d_{\rm H}^{\rm KF}(\varphi^*\Lip_1(X),\psi^*\Lip_1(Y))<\ep.
\]
Thus there exists $f\in\Lip_1(X)$ such that $\kf(\varphi^*f,\psi^*g)<\varepsilon$.
Now, we have
\begin{align*}
\dP(f_*m_X,g_*m_Y)&=\dP(f_*(\varphi_*\leb),g_*(\psi_*\leb))\\
&\le \kf(\varphi^*f,\psi^*g)<\varepsilon.
\end{align*}
Therefore we have $f_*m_X\succ_{(2\varepsilon,\varepsilon)}g_*m_Y$ by Lemma \ref{lem:order_dev:Prohorov}.
Since $\nu$ is an $(s,t)$-iso-dominant of $X$, we have  $\nu\succ_{(s,t)} f_*m_X$, which implies $\nu \succ_{(s+2\varepsilon,t+\varepsilon)}g_*m_Y$ by Theorem \ref{thm:order_dev:transitive}.
\end{proof}

\begin{proof}[Proof of Theorem \ref{thm:stabilityIso}]
Without loss of generality, we assume
\[
\dconc(X_n,X)<\varepsilon_n \text{ and } \dP(\nu_n,\nu)<\varepsilon_n \text{\quad for any positive integer $n$}.
\]
Take any positive integer $n$.
Since the measure $\nu_n$ is an $(s+\varepsilon_n,t+\varepsilon_n)$-iso-dominant of $X_n$, the measure $\nu$ is an $(s+3\varepsilon_n,t+2\varepsilon_n)$-iso-dominant of $X_n$ by Lemma \ref{lem:isodominantProk}. By Lemma \ref{lem:isodominantConc}, the meaure $\nu$ is an $(s+5\varepsilon_n,t+3\varepsilon_n)$-iso-dominant of $X$.  Thus we have $\nu\succ_{(s+5\varepsilon_n,t+3\varepsilon_n)} f_*m_X$ for any $f\in\Lip_1(X)$.
By Theorem \ref{thm:order_dev:nonDegenerate}, we obtain $\nu\succ_{(s,t)}f_*m_X$. This completes the proof.
\end{proof}

To apply Theorem \ref{thm:stabilityIso} for pyramids, we consider the following Propositions \ref{prop:orderIsoDominant} and \ref{prop:equivIsoDominant}, and Definition \ref{dfn:isoDominantPyramid}. 
We refer to \cite{Gmv:green, Shioya:mmg} for the theory of pyramids.

\begin{prop}\label{prop:orderIsoDominant}
Let $X$ and $Y$ be two mm-spaces. 
If a Borel probability measure $\nu$ on $\R$ is an $(s,t)$-iso-dominant of $X$ for $s,t\ge 0$ and we have $X\succ Y$, then $\nu$ is an $(s,t)$-iso-dominant of $Y$.
\end{prop}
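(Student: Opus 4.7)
The plan is to show that $\mathcal{M}(Y;1) \subset \mathcal{M}(X;1)$ whenever $X \succ Y$, from which the conclusion is immediate by the definition of $(s,t)$-iso-dominant. Since being an $(s,t)$-iso-dominant is a condition required to hold for every element of the $1$-measurement, a smaller $1$-measurement inherits the property from a larger one.

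To carry this out, I would first fix a $1$-Lipschitz map $f : X \to Y$ with $f_* m_X = m_Y$, whose existence is guaranteed by $X \succ Y$. Then, given an arbitrary $\mu \in \mathcal{M}(Y;1)$, write $\mu = \varphi_* m_Y$ for some $1$-Lipschitz $\varphi : Y \to \mathbb{R}$. The key observation is that the composition $\varphi \circ f : X \to \mathbb{R}$ is $1$-Lipschitz (composition of $1$-Lipschitz maps), and
\[
(\varphi \circ f)_* m_X = \varphi_* (f_* m_X) = \varphi_* m_Y = \mu,
\]
so $\mu \in \mathcal{M}(X;1)$. This establishes $\mathcal{M}(Y;1) \subset \mathcal{M}(X;1)$.

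Given this containment, the conclusion is almost automatic: for any $\mu \in \mathcal{M}(Y;1)$, we have $\mu \in \mathcal{M}(X;1)$, and since $\nu$ is an $(s,t)$-iso-dominant of $X$, the relation $\nu \succ'_{(s,t)} \mu$ holds. This is exactly what it means for $\nu$ to be an $(s,t)$-iso-dominant of $Y$. There is no real obstacle here; the statement is essentially a compatibility lemma between the Lipschitz order on mm-spaces and the iso-Lipschitz order with error on their $1$-measurements, and the proof is a one-line diagram chase once the inclusion of $1$-measurements is noted.
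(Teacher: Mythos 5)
Your proof is correct, and it is the natural (and surely intended) argument: $X \succ Y$ gives $\mathcal{M}(Y;1) \subset \mathcal{M}(X;1)$ via precomposition with the dominating $1$-Lipschitz map, and the $(s,t)$-iso-dominant property then passes down trivially from the larger $1$-measurement to the smaller one. The paper states this proposition without a written proof, and your diagram chase is exactly the kind of one-liner that was being left to the reader.
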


\begin{dfn}\label{dfn:isoDominantPyramid}
Let $\cY\subset\cX$.
We say that a Borel probability measure $\nu$ on $\R$ is {\it an $(s,t)$-iso-dominant of $\cY$} if $\nu$ is an $(s,t)$-iso-dominant of $X$ for any mm-space $X$.
\end{dfn}

\begin{prop}\label{prop:equivIsoDominant}
Let $X$ be an mm-space, and $\nu$ a Borel probability measure on $\R$.
Then, $\nu$ is an $(s,t)$-iso-dominant of $X$ if and only if $\nu$ is an $(s,t)$-iso-dominant of $\mathcal P_X:=\set{Y\in \cX\mid Y\prec X}$.
\end{prop}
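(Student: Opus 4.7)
The proposition is essentially a formal consequence of two facts already available: the reflexivity of the Lipschitz order and Proposition~\ref{prop:orderIsoDominant} (monotonicity of $(s,t)$-iso-dominance under $\succ$). My plan is to split the equivalence into its two implications and verify each directly.

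For the ``only if'' direction, I assume $\nu$ is an $(s,t)$-iso-dominant of $X$ and take an arbitrary $Y\in\mathcal P_X$, i.e.\ $Y\prec X$. Then Proposition~\ref{prop:orderIsoDominant} applies verbatim and yields that $\nu$ is an $(s,t)$-iso-dominant of $Y$. Since $Y\in\mathcal P_X$ was arbitrary, Definition~\ref{dfn:isoDominantPyramid} gives that $\nu$ is an $(s,t)$-iso-dominant of $\mathcal P_X$.

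For the ``if'' direction, I only need to observe $X\in\mathcal P_X$. This is because the identity map $\id_X:X\to X$ is $1$-Lipschitz and satisfies $(\id_X)_*m_X=m_X$, so $X\prec X$ in the sense of Definition~\ref{def:Lip_ord} (this is also immediate from Proposition~\ref{prop:lipPartialOrder}, which asserts that $\prec$ is a partial order on $\mathcal X$ and in particular reflexive). Assuming $\nu$ is an $(s,t)$-iso-dominant of $\mathcal P_X$, applying Definition~\ref{dfn:isoDominantPyramid} to the element $X\in\mathcal P_X$ yields that $\nu$ is an $(s,t)$-iso-dominant of $X$.

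There is no real obstacle here; the statement is a packaging lemma that rephrases $(s,t)$-iso-dominance of a single mm-space as $(s,t)$-iso-dominance of the whole downward cone of Lipschitz-smaller mm-spaces. The only ingredients are reflexivity of $\prec$ and Proposition~\ref{prop:orderIsoDominant}, and both directions are one-line verifications once these are invoked.
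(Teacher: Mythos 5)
Your proof is correct and is the natural argument; the paper in fact states Proposition~\ref{prop:equivIsoDominant} without supplying a proof, evidently intending exactly this one-line reduction to Proposition~\ref{prop:orderIsoDominant} together with the reflexivity of $\prec$ (Proposition~\ref{prop:lipPartialOrder}), which gives $X\in\mathcal P_X$ and hence the ``if'' direction for free.
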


\begin{thm}\label{thm:isoStabilityPyramid}
Let $\cY_n\subset \cX$ be a $\Box$-closed subset, and $\overline{\cY}_\infty$ the set of the limits of convergent subsequences of $Y_n\in\cY_n$. We assume that a sequence $\{\nu_n\}_{n=1}^\infty$ of Borel probability measures converges weakly to a Borel probability measure $\nu$, and a sequence $\{\ep_n\}_{n=1}^\infty$ of non-negative real numbers converges to $0$. If $\nu_n$ is an $(s+\varepsilon_n,t+\varepsilon_n)$-iso-dominant of $\cY_n$ for any positive integer $n$, then $\nu$ is an $(s,t)$-iso-dominant of $\overline{\cY}_\infty$.
\end{thm}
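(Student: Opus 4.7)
The plan is to fix an arbitrary $Y \in \overline{\cY}_\infty$, select a subsequence $\{Y_{n_k}\}$ with $Y_{n_k} \in \cY_{n_k}$ converging to $Y$, and then run essentially the same chaining argument as in the proof of Theorem \ref{thm:stabilityIso} along this subsequence. The passage from ``uniform control along a sequence of $X_n$'' to ``uniform control over a single $\Box$-limit'' is exactly what needs to be replaced by ``uniform control over a pyramid of approximants''.

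First I would, by the very definition of $\overline{\cY}_\infty$, choose indices $n_k\to\infty$ and $Y_{n_k}\in\cY_{n_k}$ with $Y_{n_k}\to Y$ in $\Box$; by Proposition \ref{prop:concBox} we then also have $\dconc(Y_{n_k},Y)\to 0$. Pick real sequences $\delta_k,\eta_k\to 0$ with $\dconc(Y_{n_k},Y)<\delta_k$ and $\dP(\nu_{n_k},\nu)<\eta_k$ for every $k$. By Definition \ref{dfn:isoDominantPyramid}, since $Y_{n_k}\in\cY_{n_k}$, the measure $\nu_{n_k}$ is an $(s+\ep_{n_k},t+\ep_{n_k})$-iso-dominant of $Y_{n_k}$.

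Next, fix any $g\in\Lip_1(Y)$; the goal is $\nu\succ'_{(s,t)} g_*m_Y$. By Lemma \ref{lem:isodominantConc} applied to $Y_{n_k}$ and $Y$, the measure $\nu_{n_k}$ is an $(s+\ep_{n_k}+2\delta_k,\,t+\ep_{n_k}+\delta_k)$-iso-dominant of $Y$, so in particular
\[
\nu_{n_k}\succ'_{(s+\ep_{n_k}+2\delta_k,\,t+\ep_{n_k}+\delta_k)} g_*m_Y.
\]
Since $\dP$ is symmetric, Lemma \ref{lem:order_dev:Prohorov} (with the two measures swapped) yields $\nu\succ'_{(2\eta_k,\eta_k)}\nu_{n_k}$. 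Composing these two via Theorem \ref{thm:order_dev:transitive} gives
\[
\nu\succ'_{(s+\ep_{n_k}+2\delta_k+2\eta_k,\,t+\ep_{n_k}+\delta_k+\eta_k)} g_*m_Y.
\]

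Finally, because $\ep_{n_k},\delta_k,\eta_k\to 0$, for every $\kappa>0$ we eventually obtain $\nu\succ'_{(s+\kappa,\,t+\kappa)} g_*m_Y$, and Theorem \ref{thm:order_dev:nonDegenerate} then yields $\nu\succ'_{(s,t)} g_*m_Y$. Since $g\in\Lip_1(Y)$ and $Y\in\overline{\cY}_\infty$ were arbitrary, $\nu$ is an $(s,t)$-iso-dominant of $\overline{\cY}_\infty$. I do not anticipate a substantive obstacle: all the machinery (the transitivity, the Prohorov-to-iso-order comparison, the $\Box\ge\dconc$ inequality, and the non-degeneracy lemma) is already in hand, and the only genuinely new observation is that one may extract, from the single element $Y$ of the ``limit pyramid'', a diagonal approximating sequence drawn one-at-a-time from each $\cY_{n_k}$, which is precisely how $\overline{\cY}_\infty$ is defined.
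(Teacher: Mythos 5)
Your proof is correct and takes essentially the same route as the paper: the paper simply cites Theorem \ref{thm:stabilityIso} together with Proposition \ref{prop:concBox}, and your argument is the natural unpacking of that citation along the diagonal subsequence $Y_{n_k}\in\cY_{n_k}$ converging to a fixed $Y\in\overline{\cY}_\infty$, chaining Lemma \ref{lem:isodominantConc}, Lemma \ref{lem:order_dev:Prohorov}, Theorem \ref{thm:order_dev:transitive}, and finally Theorem \ref{thm:order_dev:nonDegenerate} exactly as the proof of Theorem \ref{thm:stabilityIso} does (your use of Lemma \ref{lem:order_dev:Prohorov} plus transitivity is just Lemma \ref{lem:isodominantProk} spelled out). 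No gap.
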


\begin{proof}
This theorem follows by Theorem \ref{thm:stabilityIso} and Proposition \ref{prop:concBox}.
\end{proof}

We obtain the following corollary by Proposition 6.9 in \cite{Shioya:mmg}.

\begin{cor}
Let $\{\mathcal P_n\}_{n=1}^\infty$ be a sequence of pyramids, and $\{\nu_n\}_{n=1}^\infty$ a sequence of Borel probability measures on $\R$. We assume that $\{\mathcal P_n\}_{n=1}^\infty$ converges weakly to a pyramid $\mathcal P$  and $\{\nu_n\}_{n=1}^\infty$ converges weakly to a Borel probability measure $\nu$ on $\R$. 
If $\nu_n$ is an $(s+\varepsilon_n,t+\varepsilon_n)$-iso-dominant of $\mathcal P_n$, then $\nu$ is an $(s,t)$-iso-dominant of $\mathcal P$.
\end{cor}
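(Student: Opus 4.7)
The plan is to deduce this as an almost immediate consequence of Theorem \ref{thm:isoStabilityPyramid} once the relationship between weak pyramid convergence and $\square$-convergence of representative sequences is made precise. First I would set $\cY_n := \mathcal P_n$ in Theorem \ref{thm:isoStabilityPyramid}. Since every pyramid is by definition a $\square$-closed subset of $\cX$, the hypothesis that $\cY_n \subset \cX$ is $\square$-closed is automatic, and the remaining hypotheses, namely $\nu_n \to \nu$ weakly and $\varepsilon_n \to 0$, coincide with those of the corollary. Applying the theorem yields that $\nu$ is an $(s,t)$-iso-dominant of $\overline{\mathcal P}_\infty$, the set of mm-isomorphism classes arising as $\square$-limits of convergent subsequences $Y_{n_i} \in \mathcal P_{n_i}$.

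Next I would invoke Proposition 6.9 in \cite{Shioya:mmg}, which characterizes the weak convergence of pyramids $\mathcal P_n \to \mathcal P$ by the two conditions that every $Y \in \mathcal P$ is the $\square$-limit of some sequence $Y_n \in \mathcal P_n$ and that any $\square$-limit of a convergent subsequence $Y_{n_i} \in \mathcal P_{n_i}$ belongs to $\mathcal P$. The first of these conditions gives exactly the inclusion $\mathcal P \subset \overline{\mathcal P}_\infty$ (and the second gives the reverse inclusion, though only the first is needed for the corollary).

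Combining the two facts, $\nu$ is an $(s,t)$-iso-dominant of every mm-space $Y \in \mathcal P$, and therefore by Definition \ref{dfn:isoDominantPyramid} an $(s,t)$-iso-dominant of $\mathcal P$ itself. The only non-routine point in this argument is locating and correctly applying Proposition 6.9 in \cite{Shioya:mmg}; once that characterization is in hand the corollary is purely formal, with all analytic content already packaged inside Theorem \ref{thm:isoStabilityPyramid} (and ultimately inside the stability result Theorem \ref{thm:stabilityIso} together with Proposition \ref{prop:concBox}). Thus I expect no genuine obstacle beyond reconciling notation between the pyramid framework of \cite{Shioya:mmg} and the $\square$-closed-subset framework of Theorem \ref{thm:isoStabilityPyramid}.
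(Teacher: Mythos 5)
Your proposal is correct and matches the paper's intended argument exactly: the paper states that the corollary follows from Proposition 6.9 in \cite{Shioya:mmg} together with Theorem \ref{thm:isoStabilityPyramid}, which is precisely the reduction you carry out, taking $\cY_n=\mathcal P_n$ and using the characterization of weak pyramid convergence to obtain $\mathcal P\subset\overline{\mathcal P}_\infty$.
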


\begin{proof}[Proof of Theorem $\ref{thm:NormalLevyProductGraphs}$]
We define a function $d_0:\R^n\to\R$ by 
\[
d_0((x_i)_{i=1}^n):=\sum_{i=1}^n |x_i|.
\]
By Example \ref{ex:lOneDiscreteIso}, the measure $(d_0)_*m_{[k]^n}$ is a $1$-iso-dominant of $X_n$, which implies that $(\ep_n \cdot d_0)_*m_{[k]^n}$ is an $\ep_n$-iso-dominant of $Y_n$ by Proposition \ref{prop:isoScaling}. By the central limit theorem, $(\ep_n \cdot d_0)_*m_{[k]^n}$ converges weakly to $\gamma^1$ as $n\to\infty$. We put $\cY_n:=\{Y_n \}$ and $\nu$ is an iso-dominant of $\overline{\cY}_\infty$ by Theorem \ref{thm:isoStabilityPyramid}. This completes the proof.
\end{proof}
\section{Applications of Iso-Lipschitz order with an additive error}
\subsection{Isoperimetric inequality of non-discrete $l^1$-cubes}

In this section, we assume that $[0,1]^n$ is equipped with the $l^1$-distance $d_{l^1}$ and the uniform measure $m_{[0,1]^n}:=\mathcal L^n|_{[0,1]^n}$, where $\mathcal L^n$ is the $n$-dimensional Lebesgue measure.
Put $[k]:=\{0,1,2,\dots ,k-1 \}$.
We have $\frac 1k [k]=\{0,\frac 1k,\frac 2k,\dots ,1-\frac 1k \}\subset [0,1]$.
We assume that $\frac 1k [k]^n$ is equipped with $l^1$-distance $d_{l^1}$ and the uniform measure $m_{\frac 1k[k]^n}:=\frac 1{k^n}\sum_{x\in \frac 1k[k]^n} \delta_x$.

\begin{lem}\label{lem:discretization}
The sequence $\{ m_{\frac 1k [k]}\}_{k=1}^\infty$ converges weakly to $m_{[0,1]^n}$ as $k\to\infty$.
\end{lem}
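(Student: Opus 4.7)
The plan is to view both measures as Borel probability measures on the compact metric space $([0,1]^n, d_{l^1})$ (which is legitimate since $\frac{1}{k}[k]^n \subset [0,1]^n$) and to exhibit an explicit coupling between $m_{[0,1]^n}$ and $m_{\frac{1}{k}[k]^n}$ supported within $l^1$-distance $n/k$ of the diagonal. Weak convergence then follows from convergence in the Prohorov distance.

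Concretely, I would introduce the coordinatewise floor map
\[
q_k\colon [0,1)^n \to \tfrac{1}{k}[k]^n, \qquad q_k(x_1,\dots,x_n):=\tfrac{1}{k}\bigl(\lfloor k x_1\rfloor,\dots,\lfloor k x_n\rfloor\bigr),
\]
extended arbitrarily on the Lebesgue-null boundary where some $x_i=1$. The fiber $q_k^{-1}\bigl(\tfrac{1}{k}(j_1,\dots,j_n)\bigr)$ is the half-open cube $\prod_{i=1}^{n}[\tfrac{j_i}{k},\tfrac{j_i+1}{k})$, which has Lebesgue measure $1/k^n$, so that
\[
(q_k)_* m_{[0,1]^n}=\tfrac{1}{k^n}\sum_{x\in\frac{1}{k}[k]^n}\delta_x=m_{\frac{1}{k}[k]^n}.
\]
By construction $d_{l^1}(x,q_k(x))<n/k$ for every $x$ in the domain.

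Setting $\pi_k:=(\mathrm{id},q_k)_*m_{[0,1]^n}\in\Pi(m_{[0,1]^n},m_{\frac{1}{k}[k]^n})$, the support of $\pi_k$ sits inside the $(n/k)$-neighborhood of the diagonal of $[0,1]^n\times[0,1]^n$, which gives $\dis_\Delta\pi_k\le n/k$. By Strassen's theorem (Theorem \ref{thm:StrassenDis}) we obtain $\dP(m_{[0,1]^n},m_{\frac{1}{k}[k]^n})\le n/k$, and letting $k\to\infty$ together with the standard fact that convergence in the Prohorov distance implies weak convergence closes the argument.

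There is no substantial obstacle here; the only detail deserving care is the harmless boundary adjustment at the faces $\{x_i=1\}$, which lies in a Lebesgue null set and therefore does not affect either the push-forward or the coupling computation above.
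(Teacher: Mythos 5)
Your argument is correct and is essentially the paper's own proof: same coordinatewise floor map, same deterministic coupling $(\id,q_k)_*m_{[0,1]^n}$, same bound $\dis_\Delta\le n/k$, same appeal to Strassen's theorem (Theorem \ref{thm:StrassenDis}) to convert this into a Prohorov bound. The only difference is that you spell out two details the paper leaves implicit (the null-set adjustment at faces $\{x_i=1\}$, where the floor map as written would land outside $\frac1k[k]$, and the verification that $(q_k)_*m_{[0,1]^n}$ is the uniform measure on $\frac1k[k]^n$), both of which are harmless but make the write-up slightly more careful.
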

\begin{proof}
Define a function $f:[0,1]^n\to \frac 1k [k]^n$ by
$f((x_i)_{i=1}^n):=(\frac 1k \lfloor k x_i \rfloor)_{i=1}^n$, where $\lfloor \cdot \rfloor$ is the floor function.
Then we have $(\id_{[0,1]^n},f)_*m_{[0,1]^n}\in\Pi(m_{[0,1]^n},m_{\frac 1k [k]})$.
Take any point $(x,f(x))\in \supp\pi=(\id_{[0,1]^n},f)([0,1]^n)$ and put $x:=(x_i)_{i=1}^n$.
Since
\[
d_{l^1}(x,f(x))=\sum_{i=1}^n |x_i-\frac 1k \lfloor kx_i\rfloor|\le \frac nk,
\]
we have $\dis_\Delta \supp\pi\le\frac nk$.
By Theorem \ref{thm:StrassenDis}, we obtain
\[
\dP(m_{[0,1]^n},m_{\frac 1k [k]})\le \frac nk \to 0
\]
as $k\to\infty$. This completes the proof.
\end{proof}

\begin{proof}[Proof of Theorem $\ref{thm:lOneIso}$]
We define a function $d_0:\R^n\to\R$ by $d_0((x_i)_{i=1}^n):=\sum_{i=1}^n |x_i|$. By Example \ref{ex:lOneDiscreteIso}, the measure $(d_0)_*m_{[k]^n}$ is a $1$-iso-dominant of $[k]^n$.
Thus the measure $(\frac 1k d_0)_*m_{[k]^n}$ is a $\frac 1k$-iso-dominant of $\frac 1k[k]^n$ because of Proposition \ref{prop:isoScaling}.
Since $d_0$ is 1-Lipschitz,
we have
\begin{align*}
\dP((\frac 1k d_0)_*m_{[k]^n},(d_0)_*m_{[0,1]^n})&= \dP((d_0)_*m_{\frac 1k[k]^n}, (d_0)_*m_{[0,1]^n})\\
&\le \dP(m_{\frac 1k[k]^n},m_{[0,1]^n})\le \frac nk
\end{align*}
by Lemma \ref{lem:discretization}.
By Theorem \ref{thm:stabilityIso}, the measure $(d_0)_*m_{[0,1]^n}$ is an iso-dominant of $[0,1]^n$.
This completes the proof.
\end{proof}

We obtain Theorem \ref{thm:lOneTorusIso} in the same way as in the proof of Theorem \ref{thm:lOneIso} by using Example \ref{ex:lOneDiscreteTorusIso}.

\subsection{Comparison theorem for observable diameter}

\begin{prop}\label{prop:orderDiam}
Let $\mu$ and $\nu$ be two Borel probability measures on $\R$. If $\mu\succ'_{(s,t)}\nu$, then we have
\[
\diam(\mu;1-\kappa)+s \ge \diam (\nu;1-\kappa-t) \text{\quad for any $\kappa> 0$.}
\]
\end{prop}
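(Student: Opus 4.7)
The plan is to produce, for any Borel set $A$ realizing the partial diameter of $\mu$ up to a small slack, a corresponding Borel set $B$ for $\nu$ whose diameter is controlled by $\diam A+s$ and whose $\nu$-measure is at least $1-\kappa-t$. The natural candidate for $B$ is the projection onto the second coordinate of $(A\times\R)\cap S$, where $\pi\in\Pi(\mu,\nu)$ and $S\subset\R^2$ realize $\mu\succ'_{(s,t)}\nu$.

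Concretely, fix any $d>\diam(\mu;1-\kappa)$ and choose a Borel set $A\subset\R$ with $\mu(A)\ge 1-\kappa$ and $\diam A\le d$. By hypothesis we have $\pi\in\Pi(\mu,\nu)$ and a Borel $S\subset\R^2$ with $\dev_\succ S\le s$ and $\pi(S)\ge 1-t$. Set $T:=(A\times\R)\cap S$; then
\[
\pi(T)\ge \pi(A\times\R)-\pi(\R^2\setminus S)\ge (1-\kappa)-t.
\]
Let $B:=\overline{\pr_2(T)}$, which is closed and hence Borel. Because $T\subset\pr_2^{-1}(B)$ and $(\pr_2)_*\pi=\nu$, we obtain $\nu(B)\ge\pi(T)\ge 1-\kappa-t$.

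The key step is the diameter estimate $\diam B\le \diam A+s$. For any $y_1,y_2\in\pr_2(T)$ there are $x_1,x_2\in A$ with $(x_i,y_i)\in S$, and the assumption $\dev_\succ S\le s$ applied in both orderings of $(y_1,y_2)$ gives
\[
|y_1-y_2|\le s+\max\{|x_1-x_2|,0\}\le s+\diam A,
\]
since whichever of $y_1-y_2,\,y_2-y_1$ is non-negative is bounded by $s+\max\{x_1-x_2,x_2-x_1,0\}=s+|x_1-x_2|$. Taking the closure does not increase the diameter in $\R$, so $\diam B\le \diam A+s\le d+s$.

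Combining the two estimates gives $\diam(\nu;1-\kappa-t)\le d+s$ for every $d>\diam(\mu;1-\kappa)$, and letting $d\downarrow\diam(\mu;1-\kappa)$ yields the claim. The only mildly delicate point is that $\pr_2(T)$ itself need not be Borel, but passing to its closure sidesteps this without loss, since closure preserves both inclusion (hence the measure bound) and diameter; no other step involves anything beyond unwinding the definitions of $\dev_\succ$ and of the transport plan marginals.
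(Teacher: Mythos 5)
Your proof is correct and takes essentially the same route as the paper's: build $T:=S\cap(\pr_1)^{-1}(A)$, bound $\pi(T)\ge 1-\kappa-t$, project to the second coordinate, and control the diameter of the projection via $\dev_\succ S\le s$ (which is Lemma~\ref{lem:dev_minus} in the paper, re-derived inline in your proposal). You are a bit more careful than the paper's write-up about passing to the closure for Borel measurability, and you correctly use an intersection where the paper's displayed formula has a stray $\cup$; otherwise the two arguments coincide.
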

\begin{proof}
By $\mu\succ'_{(s,t)}\nu$, there exist $\pi\in\Pi(\mu,\nu)$ and a Borel set $S\subset \R^2$ such that $\dev_\prec S\le s$ and $1-\pi(S)\le t$.
Take any Borel set $A\subset \R$ with $\mu(A)\ge 1-\kappa$.
Put $B:=\pr_2(S\cup (\pr_1)^{-1}(A))$.
Since
\begin{align*}
\nu(\overline B)&\ge\pi(S\cup (\pr_1)^{-1}(A))\\
&=\pi( (\pr_1)^{-1}(A))-\pi(S^c\cup (\pr_1)^{-1}(A))\\
&\ge\mu(A)-\pi(S^c)\ge1-\kappa-t,
\end{align*}
we have $\diam(\nu;1-\kappa-t)\le\diam B$.
By Lemma \ref{lem:dev_minus}, we have $\diam B\le \diam A+\dev_\prec S$, which implies $\diam(\nu;1-\kappa-t)\le \diam A+s$. Then we obtain $\diam(\nu;1-\kappa-t)\le\diam(\mu;1-\kappa)+s$. This completes the proof.
\end{proof}

\begin{prop}\label{prop:diamWeakly}
Let $\{\mu_n\}_{n=1}^\infty$ be a sequence of Borel probability measures on $\R$ and $\kappa$ a positive real number. We assume that $\{\mu_n\}_{n=1}^\infty$ converges weakly to a Borel probablity meaure $\mu$ on $\R$ and that the function $t\mapsto \diam(\mu;1-t)$ is continuous at $\kappa$. Then we have 
\[
\lim_{n\to\infty}\diam(\mu_n;1-\kappa)=\diam(\mu;1-\kappa).
\]
\end{prop}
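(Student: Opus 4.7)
The plan is to establish $\limsup_{n\to\infty}\diam(\mu_n;1-\kappa)\le\diam(\mu;1-\kappa)$ and $\liminf_{n\to\infty}\diam(\mu_n;1-\kappa)\ge\diam(\mu;1-\kappa)$ separately. The key preliminary observation, specific to $\R$, is that $\diam(\nu;\alpha)=\inf\set{b-a\mid a\le b,\ \nu([a,b])\ge\alpha}$, since every Borel set of positive $\nu$-measure is contained in the closed interval of the same diameter spanned by its infimum and supremum. In particular $\alpha\mapsto\diam(\mu;\alpha)$ is monotone non-decreasing, so the continuity hypothesis at $\kappa$ packages exactly the nontrivial one-sided limit $\lim_{\eta\to 0^+}\diam(\mu;1-\kappa+\eta)=\diam(\mu;1-\kappa)$.

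For the upper bound, given $\eta,\varepsilon>0$ I would pick an interval $[c,d]$ with $\mu([c,d])\ge 1-\kappa+\eta$ and $d-c\le\diam(\mu;1-\kappa+\eta)+\varepsilon$, then enlarge it to $[a,b]$ with $a<c$ and $b>d$ both continuity points of $F_\mu$, at arbitrarily small additional length (the jump set of $F_\mu$ being at most countable). Weak convergence at continuity points gives $\mu_n([a,b])\to\mu([a,b])\ge 1-\kappa+\eta>1-\kappa$, so $\mu_n([a,b])\ge 1-\kappa$ eventually and hence $\diam(\mu_n;1-\kappa)\le b-a$. Taking $\limsup$ in $n$ and then $\varepsilon\to 0$ and $\eta\to 0^+$, the continuity assumption forces $\diam(\mu;1-\kappa+\eta)\to\diam(\mu;1-\kappa)$ and closes this direction.

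For the lower bound, extract a subsequence $\{n_k\}$ with $\diam(\mu_{n_k};1-\kappa)\to\liminf_n\diam(\mu_n;1-\kappa)$ together with near-optimal intervals $[a_k,b_k]$ satisfying $\mu_{n_k}([a_k,b_k])\ge 1-\kappa$ and $b_k-a_k\le\diam(\mu_{n_k};1-\kappa)+1/k$. Prohorov tightness of $\{\mu_n\}$ provides $M>0$ with $\mu_n([-M,M])>1-\delta$ for all $n$ and some $\delta<\min(\kappa,1-\kappa)$; this uniformly bounds $\diam(\mu_n;1-\kappa)\le 2M$ and forces $[a_k,b_k]\cap[-M,M]\ne\emptyset$ via $\mu_{n_k}([a_k,b_k])+\mu_{n_k}([-M,M])>1$, so $a_k$ and $b_k$ remain uniformly bounded. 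Pass to a further subsequence with $a_k\to a$ and $b_k\to b$; for each $\delta'>0$, $[a_k,b_k]\subset[a-\delta',b+\delta']$ eventually, and the closed-set form of Portmanteau's theorem gives $\mu([a-\delta',b+\delta'])\ge\limsup_k\mu_{n_k}([a-\delta',b+\delta'])\ge 1-\kappa$; sending $\delta'\to 0$ via the nested intersection yields $\mu([a,b])\ge 1-\kappa$, hence $\diam(\mu;1-\kappa)\le b-a=\lim_k(b_k-a_k)=\liminf_n\diam(\mu_n;1-\kappa)$. This half makes no use of the continuity hypothesis; the genuine obstacle is the upper bound, where a jump of $\alpha\mapsto\diam(\mu;\alpha)$ at $\alpha=1-\kappa$ -- precisely the failure of continuity of $t\mapsto\diam(\mu;1-t)$ at $\kappa$ -- would otherwise let $\mu_n$-optimal intervals exceed $\diam(\mu;1-\kappa)$ in the limit, and the hypothesis is exactly what prevents this pathology.
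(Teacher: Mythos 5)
Your proof is correct, and it takes a genuinely different route from the paper. The paper's argument is short because it leans on the machinery developed earlier: it notes that $\dP(\mu_n,\mu)\to 0$, invokes Lemma~\ref{lem:order_dev:Prohorov} (itself built on Strassen's theorem) to obtain $\mu\succ'_{(2\ep_n,\ep_n)}\mu_n$ and $\mu_n\succ'_{(2\ep_n,\ep_n)}\mu$ with $\ep_n\to 0$, and then sandwiches $\diam(\mu_n;1-\kappa)$ between $\diam(\mu;1-\kappa+\ep_n)+2\ep_n$ and $\diam(\mu;1-\kappa-\ep_n)-2\ep_n$ via Proposition~\ref{prop:orderDiam}, using the continuity hypothesis on both sides of the squeeze. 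You instead give a self-contained, elementary argument from the interval characterization of partial diameter on $\R$, the Portmanteau theorem, and tightness, splitting into $\limsup$ (via continuity points of $F_\mu$ and the right-continuity of $\alpha\mapsto\diam(\mu;\alpha)$ at $1-\kappa$) and $\liminf$ (via a compactness/diagonal extraction of near-optimal intervals). What your approach buys: it requires none of the paper's $(s,t)$-order apparatus, and it isolates that the continuity hypothesis is used only for the $\limsup$ direction; the $\liminf$ inequality holds unconditionally, and in fact the very same extraction argument shows that $\alpha\mapsto\diam(\mu;\alpha)$ is automatically left-continuous, so the stated two-sided continuity hypothesis reduces to right-continuity at $\alpha=1-\kappa$. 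One small caveat: when you say monotonicity means the hypothesis "packages exactly the nontrivial one-sided limit," note that monotonicity alone does not make the other one-sided limit automatic -- that still needs the tightness/Portmanteau argument you give in the $\liminf$ half -- but since you carry out that argument, the claim is ultimately justified. You should also implicitly take $\eta<\kappa$ and $\kappa<1$ (the case $\kappa\ge 1$ being trivial) so that $1-\kappa+\eta\in(0,1)$.
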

\begin{proof}
Put $\ep_n:=\dP(\mu_n,\mu)+\frac 1n$. By Lemma \ref{lem:order_dev:Prohorov}, we have $\mu_n \prec_{(2\ep_n,\ep_n)} \mu$ and $\mu \prec_{(2\ep_n,\ep_n)} \mu_n$. Since $\kappa-\ep_n>0$ for sufficiently large $n$, we have
\begin{align*}
\diam(\mu;1-(\kappa-\ep_n))+2\ep_n&\ge \diam(\mu_n;1-(\kappa-\ep_n)-\ep_n)\\
&\ge \diam(\mu;1-\kappa-\ep_n)-2\ep_n
\end{align*}
by Proposition \ref{prop:orderDiam}. Since $t\mapsto \diam(\mu;1-t)$ is continuous, we obtain
$\lim_{n\to\infty}\diam(\mu_n;1-\kappa)=\diam(\mu;1-\kappa$. This completes the proof.
\end{proof}
\begin{thm}\label{thm:isoDominantToObsDiam}
Let $s$ and $t$ be two non-negative real numbers.
If a Borel probability measure $\nu$ on $\R$ is an $(s,t)$-iso-dominant of an mm-space $X$, then we have
$\ObsDiam(X;-\kappa-t)\le \diam(\nu;1-\kappa)+s$ for any $\kappa\ge 0$.
\end{thm}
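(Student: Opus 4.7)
\emph{Plan.} The statement is an immediate corollary of Proposition~\ref{prop:orderDiam}. Fix $\kappa\ge 0$ and take any $\mu\in\cM(X;1)$. Since $\nu$ is an $(s,t)$-iso-dominant of $X$, the definition gives $\nu\succ'_{(s,t)}\mu$. Proposition~\ref{prop:orderDiam}, applied with $\nu$ in the role of its first argument and $\mu$ in the role of its second, then yields
\[
\diam(\nu;1-\kappa)+s\ge\diam(\mu;1-\kappa-t).
\]
Taking the supremum of the right-hand side over $\mu\in\cM(X;1)$ and using
\[
\ObsDiam(X;-\kappa-t)=\sup_{\mu\in\cM(X;1)}\diam(\mu;1-\kappa-t)
\]
immediately produces the claimed bound $\ObsDiam(X;-\kappa-t)\le\diam(\nu;1-\kappa)+s$.

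The only point that requires a brief comment is that Proposition~\ref{prop:orderDiam} is stated for $\kappa>0$ whereas the present theorem allows $\kappa=0$. I would dispose of this by noting that positivity of $\kappa$ is nowhere used in the proof of that proposition: the argument chooses an arbitrary Borel set $A\subset\R$ with $\mu(A)\ge 1-\kappa$, and the case $\kappa=0$ simply corresponds to $\mu(A)=1$, which is admissible. If one prefers to avoid revisiting the earlier proof, one can instead apply the inequality for a sequence $\kappa_n\downarrow 0$ and pass to the limit, using the monotonicity of $\alpha\mapsto\diam(\cdot;\alpha)$ together with the fact that $\ObsDiam(X;-\alpha)$ is monotone in $\alpha$.

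Given that the order-to-diameter comparison in Proposition~\ref{prop:orderDiam}, the definition of iso-dominant with an error, and the definition of observable diameter are all already in place, there is essentially no obstacle; the whole theorem amounts to observing that taking the supremum over $\mu\in\cM(X;1)$ in Proposition~\ref{prop:orderDiam} converts a partial-diameter inequality for individual pushforwards into an observable-diameter inequality for $X$.
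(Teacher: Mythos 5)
Your proof is correct and is essentially the same as the paper's: apply Proposition~\ref{prop:orderDiam} to $\nu\succ'_{(s,t)}f_*m_X$ for each $1$-Lipschitz $f$ and take the supremum over $\cM(X;1)$ to convert the partial-diameter inequality into the observable-diameter bound. Your remark that the hypothesis $\kappa>0$ in Proposition~\ref{prop:orderDiam} is never actually used (so that $\kappa=0$ is admissible) correctly fills a small gap that the paper leaves implicit.
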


\begin{proof}
Take any 1-Lipschitz function $f:X\to\R$. Since $\nu$ is an $(s,t)$-iso-dominant of $X$, we have $\nu\prec_{(s,t)} f_*m_X$. By Proposition \ref{prop:orderDiam}, we have $\diam(f_*m_X;1-\kappa-t)\le\diam(\nu;1-\kappa)+s$. Thus we obtain $\ObsDiam(X;1-\kappa-t)\le \diam(\nu;1-\kappa)+s$. This completes the proof.
\end{proof}
Let $G_1,G_2,\dots,G_n,\dots$
be connected graphs with same order $k\ge 2$.
Put $\varepsilon_{k,n}:=\sqrt{\frac{12}{(k^2-1)n}}$.
\begin{thm}\label{thm:graphObsDiam}
We define a function $d_{0,n}:\R^n\to \R$ by $d_{0,n}((x_i)_{i=1}^n):=\sum_{i=1}^n |x_i|$.
Put $\nu_{k,n}:=(\ep_{k,n}\cdot d_{0,n})_*m_{[k]^n}$.
Then we have
\begin{align}
\ObsDiam(\varepsilon_{k,n}\prod_{i=1}^n G_i;-\kappa)&\le \diam(\nu_{k,n};1-\kappa)+\varepsilon_{k,n}\label{thm:graphObsDiam:ineq1}\\
&\le \ObsDiam(\varepsilon_{k,n}[k]^n;-\kappa)+\varepsilon_{k,n}\label{thm:graphObsDiam:ineq2}\\
&\le \diam(\nu_{k,n};1-\kappa)+2\varepsilon_{k,n}.\label{thm:graphObsDiam:ineq3}
\end{align}
\end{thm}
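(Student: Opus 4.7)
The plan is to obtain the three inequalities as a short chain from the iso-dominant input of Example \ref{ex:lOneDiscreteIso}, combined with the scaling of iso-dominants (Proposition \ref{prop:isoScaling}) and the observable-diameter comparison of Theorem \ref{thm:isoDominantToObsDiam}. No new computation is needed; the work is in lining up the pieces correctly.

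First I set up the basic iso-dominant. Example \ref{ex:lOneDiscreteIso} says that $(d_0)_*m_{[k]^n}$ is a $1$-iso-dominant of $\prod_{i=1}^n G_i$ with the path metric. Since $\nu_{k,n}=(\varepsilon_{k,n} d_{0,n})_*m_{[k]^n}=(\varepsilon_{k,n}\cdot\id_\R)_*(d_0)_*m_{[k]^n}$, Proposition \ref{prop:isoScaling} (with scaling factor $\varepsilon_{k,n}$) yields that $\nu_{k,n}$ is an $\varepsilon_{k,n}$-iso-dominant of $\varepsilon_{k,n}\prod_{i=1}^n G_i$. Reading $\varepsilon_{k,n}$-iso-dominant as $(\varepsilon_{k,n},0)$-iso-dominant, Theorem \ref{thm:isoDominantToObsDiam} with $s=\varepsilon_{k,n}$ and $t=0$ immediately yields \eqref{thm:graphObsDiam:ineq1}.

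For \eqref{thm:graphObsDiam:ineq2}, the observation is that $d_{0,n}$ is $1$-Lipschitz on $[k]^n$ for the $l^1$-distance, so $\varepsilon_{k,n}d_{0,n}$ is a $1$-Lipschitz function on $\varepsilon_{k,n}[k]^n$ (the metric scales by $\varepsilon_{k,n}$ but the measure is unchanged). Hence $\nu_{k,n}\in\cM(\varepsilon_{k,n}[k]^n;1)$, and by the very definition of the observable diameter $\diam(\nu_{k,n};1-\kappa)\le\ObsDiam(\varepsilon_{k,n}[k]^n;-\kappa)$. Adding $\varepsilon_{k,n}$ to both sides gives \eqref{thm:graphObsDiam:ineq2}.

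For \eqref{thm:graphObsDiam:ineq3}, I simply repeat the argument for \eqref{thm:graphObsDiam:ineq1} with $[k]^n$ in place of $\prod_{i=1}^n G_i$: Example \ref{ex:lOneDiscreteIso} explicitly records that $(d_0)_*m_{[k]^n}$ is a $1$-iso-dominant of the discrete $l^1$-cube $[k]^n$ itself, so Proposition \ref{prop:isoScaling} and Theorem \ref{thm:isoDominantToObsDiam} together yield $\ObsDiam(\varepsilon_{k,n}[k]^n;-\kappa)\le\diam(\nu_{k,n};1-\kappa)+\varepsilon_{k,n}$, which is \eqref{thm:graphObsDiam:ineq3}. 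There is no genuine obstacle here; the only points requiring some care are (i) recognizing that an $\varepsilon_{k,n}$-iso-dominant is exactly an $(\varepsilon_{k,n},0)$-iso-dominant so that Theorem \ref{thm:isoDominantToObsDiam} applies with $t=0$, and (ii) invoking Proposition \ref{prop:isoScaling} so that the metric on $[k]^n$ and the pushforward on $\R$ scale by the same factor $\varepsilon_{k,n}$.
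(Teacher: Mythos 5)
Your proof is correct and follows precisely the same route as the paper's: Example \ref{ex:lOneDiscreteIso} plus Proposition \ref{prop:isoScaling} plus Theorem \ref{thm:isoDominantToObsDiam} for \eqref{thm:graphObsDiam:ineq1} and \eqref{thm:graphObsDiam:ineq3}, and the membership $\nu_{k,n}\in\cM(\varepsilon_{k,n}[k]^n;1)$ together with the definition of $\ObsDiam$ for \eqref{thm:graphObsDiam:ineq2}. You have merely spelled out the scaling and the $(s,t)=(\varepsilon_{k,n},0)$ instantiation that the paper leaves implicit.
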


\begin{proof}
By Theorem \ref{thm:isoDominantToObsDiam} and Example \ref{ex:lOneDiscreteIso}, and Proposition \ref{prop:isoScaling}, we have \eqref{thm:graphObsDiam:ineq1} and \eqref{thm:graphObsDiam:ineq3}. Since $\nu_{k,n}\in \cM(\varepsilon_{k,n}[k]^n;1)$, we have \eqref{thm:graphObsDiam:ineq2}. This completes the proof.
\end{proof}
\begin{cor}
We have
\[
\limsup_{n\to\infty}\ObsDiam(\varepsilon_{k,n}\prod_{i=1}^n G_i;-\kappa)\le\diam(\gamma^1;1-\kappa) \text{ for $\kappa> 0$}.
\]
\end{cor}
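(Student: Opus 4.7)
The plan is to combine inequality \eqref{thm:graphObsDiam:ineq1} of Theorem \ref{thm:graphObsDiam} with Proposition \ref{prop:diamWeakly}. First I would invoke \eqref{thm:graphObsDiam:ineq1}, namely
\[
\ObsDiam\Bigl(\varepsilon_{k,n}\prod_{i=1}^n G_i;-\kappa\Bigr)\le\diam(\nu_{k,n};1-\kappa)+\varepsilon_{k,n},
\]
valid for every $n$, and note that $\varepsilon_{k,n}=\sqrt{12/((k^2-1)n)}\to 0$ as $n\to\infty$. Thus it suffices to bound $\limsup_{n\to\infty}\diam(\nu_{k,n};1-\kappa)$ by $\diam(\gamma^1;1-\kappa)$.

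Next I would apply Proposition \ref{prop:diamWeakly} with $\mu_n=\nu_{k,n}$ and $\mu=\gamma^1$. The weak convergence $\nu_{k,n}\to\gamma^1$ (after translating by the mean, which leaves partial diameters unchanged) is the central limit theorem for i.i.d.\ samples uniform on $\{0,1,\dots,k-1\}$ with the normalization built into $\varepsilon_{k,n}$; this is the same CLT already used in the proof of Theorem \ref{thm:NormalLevyProductGraphs}. The remaining hypothesis of Proposition \ref{prop:diamWeakly} is continuity of $t\mapsto\diam(\gamma^1;1-t)$ at $\kappa$. Since $\gamma^1$ has a continuous, strictly positive density on $\R$, the infimum defining the partial diameter is attained on a symmetric interval $[-a_\kappa,a_\kappa]$ with $a_\kappa$ determined by $\Phi(a_\kappa)=1-\kappa/2$, where $\Phi$ denotes the standard normal cumulative distribution function; strict monotonicity and continuity of $\Phi$ make $a_\kappa$, and hence $\diam(\gamma^1;1-\kappa)=2a_\kappa$, continuous on $(0,1]$. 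This yields $\lim_{n\to\infty}\diam(\nu_{k,n};1-\kappa)=\diam(\gamma^1;1-\kappa)$.

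Combining these steps,
\[
\limsup_{n\to\infty}\ObsDiam\Bigl(\varepsilon_{k,n}\prod_{i=1}^n G_i;-\kappa\Bigr)\le\lim_{n\to\infty}\bigl(\diam(\nu_{k,n};1-\kappa)+\varepsilon_{k,n}\bigr)=\diam(\gamma^1;1-\kappa),
\]
as claimed. The only verification beyond invoking already established results is the continuity of the Gaussian partial diameter, which is elementary Gaussian calculus; I do not expect any serious obstacle.
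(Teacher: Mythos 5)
Your proposal is correct and follows exactly the same route as the paper: apply inequality \eqref{thm:graphObsDiam:ineq1} of Theorem \ref{thm:graphObsDiam}, note $\varepsilon_{k,n}\to 0$, and pass to the limit via Proposition \ref{prop:diamWeakly}. The paper's proof is a one-line citation of these two results, whereas you add the useful (and accurate) verification of the hypotheses of Proposition \ref{prop:diamWeakly}—the CLT convergence of $\nu_{k,n}$ to $\gamma^1$ after centering, which is harmless because partial diameter is translation-invariant, and the continuity of $\kappa\mapsto\diam(\gamma^1;1-\kappa)$ via the symmetric-interval characterization and the continuity of $\Phi$—but these are exactly the checks the paper leaves implicit, so the substance is identical.
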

\begin{proof}
This follows from \eqref{thm:graphObsDiam:ineq1} in Theorem \ref{thm:graphObsDiam} and Proposition \ref{prop:diamWeakly}. This completes the proof.
\end{proof}
\begin{cor}
We have
\[
\lim_{n\to\infty}\ObsDiam(\varepsilon_{k,n}[k]^n;-\kappa)=\diam(\gamma^1;1-\kappa) \text{ for $\kappa> 0$}.
\]
In paricular, we obtain
\[
\lim_{n\to\infty}\ObsDiam(\frac{2}{\sqrt{n}}Q^n;-\kappa)=\diam(\gamma^1;1-\kappa) \text{ for $\kappa> 0$}
\]
as the case $k=2$.
\end{cor}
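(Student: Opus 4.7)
The plan is to sandwich $\ObsDiam(\varepsilon_{k,n}[k]^n;-\kappa)$ between two quantities that have the same limit. Indeed, inequalities \eqref{thm:graphObsDiam:ineq2} and \eqref{thm:graphObsDiam:ineq3} of Theorem \ref{thm:graphObsDiam}, when the $+\varepsilon_{k,n}$ term is peeled off from both sides of \eqref{thm:graphObsDiam:ineq2}, combine to give
\[
\diam(\nu_{k,n};1-\kappa)\;\le\;\ObsDiam(\varepsilon_{k,n}[k]^n;-\kappa)\;\le\;\diam(\nu_{k,n};1-\kappa)+\varepsilon_{k,n}.
\]
Since $\varepsilon_{k,n}\to 0$ as $n\to\infty$, it suffices to show that $\diam(\nu_{k,n};1-\kappa)\to\diam(\gamma^1;1-\kappa)$.

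For the convergence of partial diameters I would invoke Proposition \ref{prop:diamWeakly}. The two hypotheses to check are: (i) $\nu_{k,n}$ converges weakly to $\gamma^1$, and (ii) the map $t\mapsto \diam(\gamma^1;1-t)$ is continuous at $\kappa$. Point (i) is exactly the central limit theorem applied to the i.i.d.\ sum $d_{0,n}=\sum_{i=1}^n |x_i|$ on $[k]^n$ with the chosen normalization $\varepsilon_{k,n}=\sqrt{12/((k^2-1)n)}$, which matches the variance of the centered uniform distribution on $[k]$ (this is the same input used in the proof of Theorem \ref{thm:NormalLevyProductGraphs}). Point (ii) is immediate because $\gamma^1$ has a smooth, strictly positive density on $\R$, so the cumulative distribution function is strictly increasing and the partial diameter depends continuously on the mass parameter.

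Combining these two facts with the above sandwich yields
\[
\lim_{n\to\infty}\ObsDiam(\varepsilon_{k,n}[k]^n;-\kappa)=\diam(\gamma^1;1-\kappa)
\]
for every $\kappa>0$. For the specialization to the Hamming cube $Q^n$, I would simply substitute $k=2$: then $[k]^n=\{0,1\}^n=Q^n$ with the $l^1$ (Hamming) distance, and $\varepsilon_{2,n}=\sqrt{12/(3n)}=2/\sqrt{n}$, which gives the stated second equality.

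There is no real obstacle here; the proof is essentially a one-line assembly of Theorem \ref{thm:graphObsDiam}, the CLT, and Proposition \ref{prop:diamWeakly}. The only point worth being careful about is the continuity hypothesis in Proposition \ref{prop:diamWeakly}, but this is automatic for the Gaussian and for any $\kappa>0$.
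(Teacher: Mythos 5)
Your proof is correct and takes essentially the same route as the paper: the paper also cites inequalities \eqref{thm:graphObsDiam:ineq2}--\eqref{thm:graphObsDiam:ineq3} of Theorem \ref{thm:graphObsDiam} to trap $\ObsDiam(\varepsilon_{k,n}[k]^n;-\kappa)$ between $\diam(\nu_{k,n};1-\kappa)$ and $\diam(\nu_{k,n};1-\kappa)+\varepsilon_{k,n}$, and then invokes Proposition \ref{prop:diamWeakly} with the CLT. You have merely spelled out the two hypotheses of Proposition \ref{prop:diamWeakly} (weak convergence of the suitably centered $\nu_{k,n}$ to $\gamma^1$, and continuity of $t\mapsto\diam(\gamma^1;1-t)$ at $\kappa>0$) that the paper leaves implicit.
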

\begin{proof}
This follows from \eqref{thm:graphObsDiam:ineq2} and \eqref{thm:graphObsDiam:ineq3} in Theorem \ref{thm:graphObsDiam} and Proposition \ref{prop:diamWeakly}. This completes the proof.
\end{proof}

\section*{Acknowledgment}

The author would like to thank Prof. Takashi Shioya for many helpful suggestions. He also thanks Daisuke Kazukawa for many stimulating discussions.

  
\end{document}